\documentclass[10pt,a4paper,twoside]{article}

\usepackage{etoolbox}
\csundef{leftbar}
\csundef{endleftbar}

\usepackage[utf8]{inputenc}
\usepackage[T1]{fontenc}
\usepackage[french,english]{babel} 
\usepackage{lmodern}
\usepackage{makeidx} 
\usepackage{graphicx}

\usepackage{caption}
\usepackage{eurosym}
\usepackage{xfrac}
\usepackage{mathrsfs}
\usepackage{stmaryrd}
\usepackage{eqnarray}
\usepackage{multirow}
\usepackage{hyperref}
\usepackage{bold-extra}
\usepackage{blkarray}

\usepackage{amsmath}
\usepackage{amsthm}
\usepackage{thmtools}
\declaretheoremstyle[
	spaceabove=\topsep, spacebelow=\topsep,
	headfont=\normalfont\bfseries,
	notefont=\normalfont, notebraces={(}{)},
	bodyfont=\normalfont,
	postheadspace=.5em,
]{defn}
\declaretheorem[numberwithin=section,name=Definition,style=defn]{defi}
\declaretheorem[sibling=defi,name=Example,style=defn]{exe}

\declaretheorem[numbered=no,name=Conjecture,style=defn]{conj}

\declaretheorem[sibling=defi,name=Conjecture,style=defn]{con}
\declaretheoremstyle[
	spaceabove=\topsep, spacebelow=\topsep,
	headfont=\normalfont\bfseries,
	notefont=\normalfont, notebraces={(}{)},
	bodyfont=\itshape,
	postheadspace=.5em,
	qed=
]{proposition}
\declaretheorem[sibling=defi,name=Proposition,style=proposition]{prop}

\declaretheorem[sibling=defi,name=Theorem,style=proposition]{theo}

\declaretheorem[sibling=defi,name=Lemma,style=proposition]{lem}
\declaretheorem[sibling=defi,name=Corollary,style=proposition]{cor}
\declaretheoremstyle[
	spaceabove=\topsep, spacebelow=\topsep,
	headfont=\normalfont\itshape,
	notefont=\normalfont, notebraces={(}{)},
	bodyfont=\normalfont,
	postheadspace=.5em,
	qed=
]{remarque}
\declaretheorem[sibling=defi,name=Remark,style=remarque]{rema}

\usepackage{amssymb}
\usepackage{amstext}

\makeatletter
\DeclareFontFamily{OMX}{MnSymbolE}{}
\DeclareSymbolFont{MnLargeSymbols}{OMX}{MnSymbolE}{m}{n}
\SetSymbolFont{MnLargeSymbols}{bold}{OMX}{MnSymbolE}{b}{n}
\DeclareFontShape{OMX}{MnSymbolE}{m}{n}{
    <-6>  MnSymbolE5
   <6-7>  MnSymbolE6
   <7-8>  MnSymbolE7
   <8-9>  MnSymbolE8
   <9-10> MnSymbolE9
  <10-12> MnSymbolE10
  <12->   MnSymbolE12
}{}
\DeclareFontShape{OMX}{MnSymbolE}{b}{n}{
    <-6>  MnSymbolE-Bold5
   <6-7>  MnSymbolE-Bold6
   <7-8>  MnSymbolE-Bold7
   <8-9>  MnSymbolE-Bold8
   <9-10> MnSymbolE-Bold9
  <10-12> MnSymbolE-Bold10
  <12->   MnSymbolE-Bold12
}{}

\let\llangle\@undefined
\let\rrangle\@undefined
\DeclareMathDelimiter{\llangle}{\mathopen}%
                     {MnLargeSymbols}{'164}{MnLargeSymbols}{'164}
\DeclareMathDelimiter{\rrangle}{\mathclose}%
                     {MnLargeSymbols}{'171}{MnLargeSymbols}{'171}
\makeatother

\usepackage{tikz}
\usetikzlibrary{arrows,matrix,cd}

\usepackage{bbm}
\usepackage{multicol}

\usepackage{import}


\usepackage{lettrine}

%
  {\vspace*{\fill}\clearpage\restoregeometry}
  

\usepackage[french,lined]{algorithm2e}


\newcommand{\Zb}{\mathbb{Z}}
\newcommand{\Rb}{\mathbb{R}}

\newcommand{\Cb}{\mathbb{C}}

\newcommand{\Abb}{\mathbb{A}}

\newcommand{\Er}{\mathrm{E}}

\newcommand{\m}{\mathfrak{m}}

\newcommand{\Kr}{\mathrm{K}}

\newcommand{\MW}{\mathrm{MW}}

\newcommand{\Nis}{\mathrm{Nis}}

\newcommand{\Sr}{\mathrm{S}}

\newcommand{\CH}{\mathrm{CH}}
\newcommand{\Hr}{\mathrm{H}}
\newcommand{\RS}{\mathrm{RS}}

\newcommand{\Cr}{\mathrm{C}}

\newcommand{\Kbf}{\mathbf{K}}
\newcommand{\Ibf}{\mathbf{I}}
\newcommand{\Ir}{\mathrm{I}}

\newcommand{\Fr}{\mathrm{F}}

\newcommand{\Wr}{\mathrm{W}}
\newcommand{\et}{\mathrm{ét}}

\newcommand{\Cs}{\mathsf{C}}
\newcommand{\Rr}{\mathrm{R}}

\newcommand{\Qr}{\mathrm{Q}}
\newcommand{\Mr}{\mathrm{M}}
\newcommand{\Wbf}{\mathbf{W}}
\newcommand{\Gr}{\mathrm{G}}

\newcommand{\Jbf}{\mathbf{J}}
\newcommand{\Qbf}{\mathbf{Q}}

\newcommand{\BH}{\mathrm{BH}}

\newcommand{\Abf}{\mathbf{A}}
\newcommand{\Bbf}{\mathbf{B}}

\newcommand{\alg}{\mathrm{alg}}
\newcommand{\Osc}{\mathscr{O}}
\newcommand{\Esc}{\mathscr{E}}

\newcommand{\Sm}{\mathrm{Sm}}
\newcommand{\Sch}{\mathrm{Sch}}
\newcommand{\Lc}{\mathcal{L}}
\newcommand{\Mbf}{\mathbf{M}}
\newcommand{\Fbf}{\mathbf{F}}
\newcommand{\Gm}{\mathbb{G}_m}
\newcommand{\Fs}{\mathsf{F}}
\newcommand{\Abr}{\mathrm{Ab}}

\newcommand{\Hbf}{\mathbf{H}}

\newcommand{\Asc}{\mathscr{A}}
\newcommand{\Hsc}{\mathscr{H}}
\newcommand{\Isc}{\mathscr{I}}

\newcommand{\Nbf}{\mathbf{N}}

\newcommand{\Zar}{\mathrm{Zar}}

\newcommand{\Cc}{\mathcal{C}}

\newcommand{\Csc}{\mathscr{C}}

\DeclareMathOperator{\Ker}{Ker}

\DeclareMathOperator{\Id}{Id}

\DeclareMathOperator{\Hom}{Hom}

\DeclareMathOperator{\Spec}{Spec}

\DeclareMathOperator{\Frac}{Frac}

\DeclareMathOperator{\Coker}{Coker}
\DeclareMathOperator{\colim}{colim}

\DeclareMathOperator{\sign}{sign}

\DeclareMathOperator{\Pic}{Pic}
\DeclareMathOperator{\Hdg}{Hdg}

\DeclareMathOperator{\Sq}{Sq}

\renewcommand{\Im}{\operatorname{Im}}

\renewcommand{\H}{\mathrm{H}}

\makeatletter 

\newcommand*\Neginternal[3]{\mathpalette\Neg@{{#1}{#2}{#3}}}
\newcommand*\Neg@[2]{\Neg@@{#1}#2}
\newcommand*\Neg@@[4]{%
  \mathrel{\ooalign{%
    $\m@th#1#4$\cr
    \hidewidth$\m@th#3{#1}\mkern\muexpr#2*2$\hidewidth\cr
  }}%
}

\newcommand*\negslash[1]{\m@th#1\not\mathrel{\phantom{=}}}
\newcommand*\snegslash[1]{\rotatebox[origin=c]{60}{$\m@th#1-$}}
\newcommand*\ssnegslash[1]{\rotatebox[origin=c]{60}{$\m@th#1{\dabar@}\mkern-7mu{\dabar@}$}}
\newcommand*\sssnegslash[1]{\rotatebox[origin=c]{60}{$\m@th#1\dabar@$}}
\makeatother  

\setcounter{secnumdepth}{5}
\setcounter{tocdepth}{1}

\usepackage[newparttoc,pagestyles,toctitles]{titlesec}
\usepackage{titletoc}

\renewcommand*\thesection{\arabic{section}}
\titleformat{\section}[block]{\Large\scshape\filcenter}{\thesection.}{12pt}{}[]

\newpagestyle{main}{

\sethead[][\thepage][]
	{}{\thepage}{}
\setfoot[][][]
	{}{}{}}
	
\usepackage{minitoc}	
\mtcselectlanguage{english}	
\doparttoc

\usepackage{appendix}

 \usepackage[style=english]{csquotes}
 
 \newsavebox{\pullback}
\sbox\pullback{%
\begin{tikzpicture}%
\draw (0,0) -- (1ex,0ex);%
\draw (1ex,0ex) -- (1ex,1ex);%
\end{tikzpicture}}

\usepackage[backend=biber,style=ext-alphabetic]{biblatex}
\addbibresource{ma_biblio_perso_quadratic_real_cycle_class_map.bib}

\DefineBibliographyExtras{english}{\DeclarePunctuationPairs{comma}{*?!}}

\DeclareBibliographyDriver{article}{%
  \usebibmacro{bibindex}%
  \usebibmacro{begentry}%
  \usebibmacro{author}%
  \setunit{\addcomma\space}
  \printfield[emph]{title}
  \setunit{\addcomma\space}
  \printfield{journaltitle}
  \setunit{\addspace}%
  \textbf{\printfield{volume}}
  \setunit{\addspace}%
  \printfield[parens]{year}
  \setunit{\addcomma\space}
  \printfield{number}
  \setunit{\addcomma\space}
  \printfield{pages}
  \usebibmacro{finentry}%
}

\DeclareFieldFormat[article]{title}{#1}
\DeclareFieldFormat[article]{journaltitle}{#1}
\DeclareFieldFormat[article]{pages}{#1}
\DeclareFieldFormat[article]{number}{\bibstring{number}~#1}

\DeclareBibliographyDriver{book}{%
  \usebibmacro{bibindex}%
  \usebibmacro{begentry}%
  \printnames{author}
  \setunit{\addcomma\space}
  \printfield{title}
  \setunit{\addcomma\space}
  \printfield{edition}
  \setunit{\addcomma\space}
  \printfield{series}
  \setunit{\addcomma\space}
  \printfield{volume}
  \setunit{\addcomma\space}
  \printfield{number}
  \setunit{\addcomma\space}
  \printlist{publisher}
  \setunit{\addcomma\space}
  \printlist{location}
  \setunit{\addcomma\space}
  \printfield{year}
  \usebibmacro{finentry}%
}

\DeclareFieldFormat[book]{volume}{\bibstring{volume}~#1}
\DeclareFieldFormat[book]{number}{\bibstring{number}~#1}
\DeclareFieldFormat[book]{publisher}{#1}

\DeclareBibliographyDriver{incollection}{%
  \usebibmacro{bibindex}%
  \usebibmacro{begentry}%
  \printnames{author}
  \setunit{\addcomma\space}
  \printfield{title}
  \setunit{\addperiod\space}
  \usebibmacro{in:}
  \printfield[emph]{booktitle}
  \setunit{\addcomma\space}
  \usebibmacro{byeditor+others}
  \setunit{\addcomma\space}
  \printfield{edition}
  \setunit{\addcomma\space}
  \printfield{series}
  \setunit{\addcomma\space}
  \printfield{volume}
  \setunit{\addcomma\space}
  \printfield{number}
  \setunit{\addcomma\space}
  \printlist{publisher}
  \setunit{\addcomma\space}
  \printlist{location}
  \setunit{\addcomma\space}
  \printfield{year}
  \setunit{\addcomma\space}
  \printfield{pages}
  \usebibmacro{finentry}%
}

\DeclareFieldFormat[incollection]{volume}{\bibstring{volume}~#1}
\DeclareFieldFormat[incollection]{number}{\bibstring{number}~#1}
\DeclareFieldFormat[incollection]{series}{#1,\addspace}


\DefineBibliographyExtras{english}{%
}%

\usepackage{authblk}

\usepackage[top=2cm, bottom=2cm, left=2cm, right=2cm]{geometry}

\numberwithin{equation}{section}

\begin{document}

\pagestyle{main}

\title{On the image of higher signature maps}
\author{Samuel Lerbet}
\affil{DMA, École normale supérieure, Université PSL, CNRS, 75005 Paris, France}
\date{\today}

\maketitle

\begin{abstract}
Given a smooth variety $X$ over the field $\mathbb{R}$ of real numbers and a line bundle $\mathcal{L}$ on $X$ with associated topological line bundle $L=\mathcal{L}(\mathbb{R})$, we study the quadratic real cycle class map $\widetilde{\gamma}_{\mathbb{R}}^c:\widetilde{\mathrm{CH}}^c(X,\mathcal{L})\rightarrow\mathrm{H}^c(X(\mathbb{R}),\mathbb{Z}(L))$ from the $c$-th Chow-Witt group of $X$ to the $c$-th cohomology group of its real locus $X(\mathbb{R})$ with coefficients in the local system $\mathbb{Z}(L)$ associated with $L$. We focus on the cases $c\in\{0,d-2,d-1,d\}$ where $d$ is the dimension of $X$ and we formulate a precise conjecture on the image of $\widetilde{\gamma}_{\mathbb{R}}$ in terms of the exponents of its cokernel that is corroborated by the results obtained in those codimensions.
\end{abstract}

\section{Introduction}

\paragraph*{The complex cycle class map.} Let $X$ be a smooth complex algebraic variety (by convention, an algebraic variety over a field $k$ is a separated $k$-scheme of finite type). The \emph{complex cycle class map} in codimension $c$ is a group homomorphism \[\gamma_\Cb^c(X):\CH^c(X)\rightarrow\Hr^{2c}(X(\Cb),\Zb)\] from the Chow group of codimension $c$ cycles on $X$ to the integral singular cohomology of $X(\Cb)$ in degree $2c$. The Hodge conjecture predicts the image of $\gamma_\Cb^c(X)$ when $X$ is projective. The map $\gamma_\Cb^c(X)$ is defined as follows. Let $i_Y:Y\hookrightarrow X$ be a subvariety of $X$ of codimension $c$. Choose a resolution of singularities $p:\widetilde{Y}\rightarrow Y$ of $Y$. The map $i_Y\circ p(\Cb):\widetilde{Y}(\Cb)\rightarrow X(\Cb)$ induced on complex loci is a proper map of complex analytic manifolds which are naturally oriented as smooth manifolds. Therefore it induces a pushforward homomorphism $i_Y\circ p(\Cb)_*:\Hr^{*}(\widetilde{Y}(\Cb),\Zb)\rightarrow\Hr^{*+2c}(X(\Cb),\Zb)$ in integral singular cohomology and we set $\gamma_\Cb^c(X)[Y]=i_Y\circ p(\Cb)_*(1)$ where $1\in\Hr^0(\widetilde{Y}(\Cb),\Zb)=\Zb$. If $Y$ is smooth, then this is exactly the class of the submanifold $Y(\Cb)$ of $X(\Cb)$ in the cohomology of $X(\Cb)$ hence the name of the map $\gamma_\Cb$.

Assume that $X$ is projective. Then the elements of the image of $\gamma_\Cb^c(X)$ are not arbitrary. Indeed, complex analysis on the compact analytic manifold $X(\Cb)$ may be used to identify a subgroup $\Hdg^{2c}(X(\Cb),\Zb)$ of $\Hr^{2c}(X(\Cb),\Zb)$ of \emph{$(c,c)$-Hodge classes} through which $\gamma_\Cb^c(X)$ factors. Then the variety $X$ is said to satisfy the integral Hodge conjecture in codimension $c$ if $\gamma_\Cb^c(X):\CH^c(X)\rightarrow\Hdg^{2c}(X(\Cb),\Zb)$ is surjective. It is possible for this property to fail to be satisfied: the first examples date back to Atiyah and Hirzebruch \cite{atiyahAnalyticCyclesComplex1962a} who constructed classes that are torsion, and are thus automatically $(c,c)$-Hodge classes, but that do not lie in the image of the complex cycle class map. Nevertheless, in view of the considerable impact of the Hodge conjecture on the development of complex algebraic geometry over the last eighty years, the \emph{rational} Hodge conjecture being still open, it seems natural to define and study counterparts of this conjecture in other areas of algebraic geometry.

\paragraph*{The Borel--Haefliger cycle class map.} In this paper, we are interested in \emph{real} algebraic geometry. In this context, it is well-known that one cannot directly copy the construction described previously to define a cycle class map with values in the \emph{integral} cohomology of the real locus $X(\Rb)$ of a smooth variety $X$ over $\Rb$. This is because the smooth manifold $X(\Rb)$ does not necessarily admit a $\Zb$-valued orientation; nor do the real loci of resolutions of singularities of the subvarieties of $X$, and resolutions of singularities need not be compatible with orientations if they exist. Thus we do not have a proper pushforward map in \emph{integral} cohomology of the real loci in this setting. However, since every smooth manifold has exactly one $\Zb/2$-valued orientation, one can define as above a \emph{mod $2$} cycle class map called the Borel--Haefliger cycle class map \[\gamma_\BH^c(X):\CH^c(X)\rightarrow\Hr^c(X(\Rb),\Zb/2)\] that was first constructed in \cite{borelClasseDhomologieFondamentale1961}. The image of $\gamma_\BH^c(X)$, denoted by $\Hr_\alg^c(X(\Rb),\Zb/2)$, has been abundantly studied, for it is closely related to classical problems in real algebraic geometry. As an example of such a problem, we can cite the approximation of submanifolds of $X(\Rb)$ by real loci of subvarieties of $X$. Nevertheless, the information carried by mod $2$ cohomology is too coarse for one to be satisfied by the Borel--Haefliger cycle class map $\gamma_\BH$ as an analogue of the complex cycle class map $\gamma_\Cb$. Any remedy to this observation should however be compatible with $\gamma_\BH$ in a suitable sense in view of its significance. To the author's knowledge, there are essentially two analogues of the complex cycle class map in real algebraic geometry, relying on entirely different ideas.

\paragraph*{The equivariant cycle class map $\gamma_G$.} This map was first constructed by Krasnov in \cite{krasnovCHARACTERISTICCLASSESVECTOR1992}; this construction is described in detail in \cite{benoistIntegralHodgeConjecture2020}. The idea here is to replace $X(\Rb)$ with $X(\Cb)$ together with the antiholomorphic involution $\sigma:X(\Cb)\rightarrow X(\Cb)$ induced by complex conjugation, which recovers $X(\Rb)$ as the set of fixed points of $\sigma$. One views $\sigma$ as an action of the Galois group $G$ of the extension $\Cb/\Rb$; techniques from equivariant homotopy theory then become available. More precisely, there is an equivariant cycle class map \[\gamma_G^c(X):\CH^c(X)\rightarrow\Hr_\Gr^{2c}(X(\Cb),\Zb(c))\] for every smooth projective real algebraic variety $X$, where $\Hr_\Gr^*$ denotes Borel equivariant cohomology and $\Zb(c)$ is the $G$-module $\Zb(c)=\sqrt{-1}^c\Zb$ with the action of $G$ induced by complex conjugation. This map refines the Borel--Haefliger cycle class map $\gamma_\BH^c(X)$ in the sense that there exists a homomorphism $\pi:\Hr_G^{2c}(X(\Cb),\Zb(c))\rightarrow\Hr^c(X(\Rb),\Zb/2)$ fitting in a commutative triangle of the form
\begin{center}
\begin{tikzcd}
\CH^c(X) \arrow[r,"\protect{\gamma_G^c(X)}"] \arrow[rd,swap,"\protect{\gamma_\BH^c(X)}"] & \Hr_G^{2c}(X(\Cb),\Zb(c)) \arrow[d,"\pi"] \\
                                                                                         & \Hr^c(X(\Rb),\Zb/2)
\end{tikzcd}
\end{center}
(this follows from the case $i=0$ of \cite[Theorem 1.18]{benoistIntegralHodgeConjecture2020}). As in the complex case, the elements of $\Im\gamma_G^c(X)$ are not arbitrary: they satisfy Hodge-theoretic and topological constraints that together determine a subgroup $\Hdg_G^{2c}(X(\Cb),\Zb(c))_0$ of $\Hr_G^{2c}(X(\Cb),\Zb(c))$. Benoist and Wittenberg then say that the real integral Hodge conjecture for codimension $c$ cycles on $X$ holds if the homomorphism $\gamma_G^c(X):\CH^c(X)\rightarrow\Hdg_G^{2c}(X(\Cb),\Zb(c))_0$ is surjective \cite[Definition 2.2]{benoistIntegralHodgeConjecture2020}.

As in the complex case, not every smooth projective real algebraic variety satisfies the real integral Hodge conjecture of \cite{benoistIntegralHodgeConjecture2020}. However, it is very useful as a framework to study problems in real algebraic geometry as it is intimately related to the size of the group $\Hr_\alg^c(X(\Rb),\Zb/2)$. Benoist and Wittenberg provide the first known examples of smooth projective varieties $X$ over $\Rb$ such that the restriction of the homomorphism $\pi:\Hr_G^{2c}(X(\Cb),\Zb(c))\rightarrow\Hr^c(X(\Rb),\Zb/2)$ to the subgroup $\Hdg_G^{2c}(X(\Cb),\Zb(c))_0\subseteq\Hr_G^{2c}(X(\Cb),\Zb(c))$ is surjective, yet the inclusion $\Hr_\alg^c(X(\Rb),\Zb/2)\varsubsetneq\Hr^c(X(\Rb),\Zb/2)$ of the subgroup of algebraic mod $2$ classes is strict. For such a variety $X$, the existence of non-algebraic classes in $\Hr^*(X(\Rb),\Zb/2)$ is not explained by Hodge theory or by topology: it must come from the failure of $X$ to satisfy the real integral Hodge conjecture.

In this approach, the remedy to the insufficiency of $\gamma_\BH^c(X)$ is to upgrade the \emph{target} to a more refined invariant of $X$, namely the equivariant cohomology of $(X(\Cb),\sigma)$: the \emph{source} remains the Chow groups of $X$. Motivic homotopy theory provides us with a set of algebraic tools that lead to a different proposition, where the Chow groups are lifted to Chow\emph{--Witt} groups: this is the main object of investigation of this paper.

\paragraph*{The quadratic real cycle class map.} Given a smooth variety $X$ over $\Rb$, this is a homomorphism \[\widetilde{\gamma}_\Rb^*(X):\widetilde{\CH}^*(X)\rightarrow\Hr^*(X(\Rb),\Zb)\] of graded rings: details can be found in \cite{hornbostelRealCycleClass2021} but we recall the construction below. Here the source $\widetilde{\CH}^*(X)$ is the \emph{Chow--Witt ring} of $X$ \cite{faselChowWittRing2007}, which refines the Chow ring of $X$ in the sense that there is a graded ring homomorphism $r:\widetilde{\CH}^*(X)\rightarrow\CH^*(X)$. The quadratic real cycle class map $\widetilde{\gamma}_\Rb^c(X)$ lifts the Borel--Haefliger cycle class map $\gamma_\BH^c(X)$ in the sense that the diagram
\begin{center}
\begin{tikzcd}
\widetilde{\CH}^c(X) \arrow[r,"r"] \arrow[d,swap,"\widetilde{\gamma}_\Rb^c(X)"] & \CH^c(X) \arrow[d,"\gamma_\BH^c(X)"] \\
\Hr^c(X(\Rb),\Zb) \arrow[r] & \Hr^c(X(\Rb),\Zb/2)
\end{tikzcd}
\end{center}
induced by the comparison map $r$ and reduction mod $2$ of the coefficients is commutative.

The most naive conjecture one can make about the quadratic real cycle class map $\widetilde{\gamma}_\Rb^c(X)$ is then to predict that this map is surjective. This is too optimistic: there are already obstructions in the case where $c=0$ and $\dim(X)=1$, namely the case of connected components of curves. One of the goals of this paper is to propose a refined statement that is more likely to be true. We eventually arrive at the following conjecture:

\begin{conj}
For every smooth $\Rb$-variety $X$ of dimension $d$ and every $0\leqslant c\leqslant d$, the image of $\widetilde{\gamma}_\Rb^c(X)$ contains $2^{d-c}\Hr^c(X(\Rb),\Zb)$. Moreover, the number $d-c$ in this exponent cannot be improved in general.
\end{conj}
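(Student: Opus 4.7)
The statement has two halves: (i) the inclusion $2^{d-c}\Hr^c(X(\Rb),\Zb(L))\subseteq\Im\widetilde{\gamma}^c_\Rb$ and (ii) the optimality of the exponent $d-c$. My plan is to verify (i) codimension by codimension in the four promised cases $c\in\{0,d-2,d-1,d\}$ using the Rost--Schmid complex computing $\widetilde{\CH}^c(X,\Lsc)$ and a parallel Gersten-type presentation of $\Hr^c(X(\Rb),\Zb(L))$, with $\widetilde{\gamma}^c_\Rb$ induced by the signature maps of the Milnor--Witt K-theory sheaves $\Kbf^{\MW}_*$. In this picture, every factor of $2$ can be traced back to the hyperbolic form $h=1+\langle -1\rangle\in\GW$, whose realisation is $2\in\Zb$.

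The boundary cases $c=d$ and $c=0$ I would handle directly. For $c=d$, every generator of $\Hr^d(X(\Rb),\Zb(L))$ is supported at a single real point of $X$, which lifts to a closed point of residue field $\Rb$ carrying the unit form; the class of this point in $\widetilde{\CH}^d(X,\Lsc)$ realises to the desired generator, so $\widetilde{\gamma}^d_\Rb$ is essentially surjective, matching $2^0=1$. For $c=0$, the source is essentially $\GW(X)$, and I would produce $2^d$ times a prescribed function on $\pi_0(X(\Rb))$ by starting from an appropriate rank-one unit form and multiplying by $h^d$, or equivalently by the rank-$2^d$ Pfister form $\langle\langle -1\rangle\rangle^{\otimes d}$, whose signature is $\pm 2^d$ on each real component where it does not vanish.

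The substantive cases are $c=d-1$ and $c=d-2$. Here I would argue by induction on $d$ through blow-ups of $X$ along smooth closed subvarieties of the right dimension, combined with resolutions of singularities of subvarieties representing the target class, using the blow-up formula for Chow--Witt groups and its compatibility with $\widetilde{\gamma}_\Rb$, while carefully tracking the twist $\Lsc$ at each stage. Each inductive step may introduce one factor of $2$, and a total of at most $d-c$ steps then yields the bound. The main obstacle is the case $c=d-2$: the representing subvarieties are surfaces whose resolutions entangle singularity data with the orientation local system coming from the normal bundle, and verifying that exactly one factor of $2$ is lost per step demands delicate control of both the Rost--Schmid differentials and the local contributions to the real cycle class map along the exceptional divisors.

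For (ii), I would exhibit explicit examples realising the bound. The natural candidates are products $X=E_1\times\cdots\times E_d$ of elliptic curves over $\Rb$ each having two real connected components, so that $X(\Rb)$ has $2^d$ components. A Künneth-type decomposition of $\Hr^c(X(\Rb),\Zb)$ indexed by subsets of $\{1,\ldots,d\}$ reduces the question to combining factorwise analyses. The elementary observation that the cokernel of $\widetilde{\gamma}^0_\Rb$ on such an elliptic curve has order $2$, together with the top-degree surjectivity established in the $c=d$ case, then yields via exterior products a class of order exactly $2^{d-c}$ in the cokernel of $\widetilde{\gamma}^c_\Rb(X)$, proving that the exponent $d-c$ cannot be lowered.
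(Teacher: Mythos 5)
The statement you are proving is a \emph{conjecture}: the paper does not prove it in full, and neither do you. What the paper actually establishes is $P(X,\Lc,c)$ for $c\in\{0,d-1,d\}$ unconditionally, for $c=d-2$ only under the hypothesis $\Hr_\et^{2d-1}(X\times_\Rb\Spec\Cb,\Zb/2)=0$, and optimality of the exponent only for $d-c\leqslant 2$. Your proposal, read literally, claims all of this and more, so the burden on each step is high, and several steps do not hold up.

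First, the method. The paper never touches blow-ups or resolutions. Its entire strategy is to use Jacobson's isomorphism $\sign_{d+1}(\Lc):\Ibf^{d+1}(\Lc)\to\iota_*\Zb(\Lc)$ to identify $\gamma_n^c(X,\Lc)$ with the purely algebraic map $\Hr^c(X,\Ibf^n(\Lc))\to\Hr^c(X,\Ibf^{d+1}(\Lc))$ induced by $\otimes\llangle -1\rrangle^{\otimes(d+1-n)}$, and then to run the four-lemma on the ladders coming from $0\to\Ibf^m(\Lc)\to\Ibf^n(\Lc)\to\Ibf^n(\Lc)/\Ibf^m(\Lc)\to 0$, feeding in vanishing and surjectivity results of Colliot-Th\'el\`ene--Scheiderer and Krasnov--van Hamel. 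Each inclusion $\Ibf^{n+1}\subseteq\Ibf^n$ costs one factor of $2$, which is where $2^{d-c}$ comes from. Your factor-of-$2$ bookkeeping via the hyperbolic form is the same phenomenon in disguise, but your proposed mechanism (blow-up induction on $d$) is not carried out, and you yourself flag the $c=d-2$ step as unresolved; note that even with the paper's machinery this case requires an extra \'etale-cohomological vanishing hypothesis, so an unconditional proof by your route would be a genuine improvement that needs real justification.

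Second, concrete gaps. (a) For $c=0$, multiplying a rank-one form by $\llangle -1\rrangle^{\otimes d}$ only produces the classes $\pm 2^d$ on each component; to get \emph{all} of $2^d\Hr^0(X(\Rb),\Zb(L))$ you must know that $\gamma_d^0(X,\Lc):\Hr^0(X,\Ibf^d(\Lc))\to\Hr^0(X(\Rb),\Zb(L))$ is surjective, which is the Krasnov--van Hamel theorem; it is not elementary and your sketch omits it. (b) For $c=d$ your "lift a real point" argument gives that every generator dual to a point is hit, but surjectivity of $\gamma^d$ in the paper comes from $\Hr^{d+1}(X,\overline{\Kbf})=0$ (cohomological dimension), and injectivity, which the paper also needs downstream, requires the transfer argument from Colliot-Th\'el\`ene--Scheiderer; you would need at least the surjectivity carefully, since $\Hr^d(X(\Rb),\Zb(L))$ need not be generated by point classes when $X(\Rb)$ is noncompact or $L$ is nontrivial. (c) For optimality, a K\"unneth argument on products of elliptic curves does not work as stated: the image of an exterior product of cycle class maps is not the tensor product of the images, and exponents of cokernels do not multiply along a K\"unneth decomposition without an argument. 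The paper's optimality examples are different and much more limited: $\Ab^1\setminus 0$ for $d-c=1$ in dimension $1$, $\Ab^d\setminus 0$ for codimension $d-1$, and rational surfaces with disconnected real locus for $d-c=2$; the general case $d-c\geqslant 3$ is left open.

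In short: your proposal identifies the right numerology but replaces the paper's filtration-plus-four-lemma argument with an uncarried-out geometric induction, omits the two nontrivial external inputs (Krasnov--van Hamel surjectivity and the Colliot-Th\'el\`ene--Scheiderer vanishing), and asserts an optimality construction that does not follow from the stated K\"unneth heuristic.
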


We refer to Conjecture \ref{con:refined_hodge_conjecture} below for a more precise formulation.

\paragraph*{Contents.} The organisation of this paper is as follows. The first section is preliminary: we explain the objects of interest of the article and some of their basic properties, included a twisted version \[\widetilde{\gamma}_\Rb^c(X,\Lc):\widetilde{\CH}^c(X,\Lc)\rightarrow\Hr^c(X(\Rb),\Zb(L))\] of the constructions previously discussed; here the twist is a line bundle $\Lc$ on $X$. In the second section, we study the map $\widetilde{\gamma}_\Rb^d(X,\Lc)$ where $d$ is the dimension of $X$. The main observation here is that the analysis of $\widetilde{\gamma}_\Rb^d(X,\Lc)$ can be reduced for a number of questions to that of the mod $2$ cycle class maps studied in \cite{colliot-theleneZerocyclesCohomologyReal1996} thanks to Lemma \ref{lem:kernel_as_torsion} below. In the third section, we draw some immediate corollaries concerning the map $\widetilde{\gamma}_\Rb^{d-1}(X,\Lc)$ and we show conditional results on the map $\widetilde{\gamma}_\Rb^{d-2}(X,\Lc)$. The fourth section similarly contains results on the case of connected components, namely the map $\widetilde{\gamma}_\Rb^0(X,\Lc)$. In the final section, we first observe that studying the image of $\widetilde{\gamma}_\Rb^c(X,\Lc)$ is only relevant integrally: we prove a quantitative version of a result of Jacobson that implies that $\widetilde{\gamma}_\Rb^c(X,\Lc)$ is surjective for every $c$ and every $(X,\Lc)$ after inverting $2$ (see in particular the more precise Corollary \ref{cor:iso_after_inverting_2}). We then come to the aforementioned refined conjecture, taking twists into account, and we observe that the results of the previous section allow us to conclude that it holds true for curves and for surfaces, and in a number of significant cases for threefolds. Hornbostel independently studied similar questions to those investigated in this article in \cite{hornbostelFewComputationsReal2024}, especially quadratic real cycle class maps for surfaces, and there is some overlap with the present results (and proofs) which we signal below.

\paragraph*{Acknowledgements.} I wish to thank my advisor Jean Fasel for his guidance, his suggestions and his reading of this typescript. I also acknowledge the material support of Université Grenoble Alpes where I was a graduate student when the research that led to the present paper was conducted, the partial support of the ANR project ANR-21-CE40-0015 thanks to which I had multiple occasions to speak about the results proven below and the full support of the ANR project ANR-23-CE40-0011 during revision. Finally, further thanks are due to the reviewer for their detailed report on an earlier version of this article and their suggestions that led to significant improvements in the exposition of its results (leaving any remaining opacity solely on the author).

\section{Preliminaries}

\subsection{General considerations}

Given a set $S$, we denote by $\Zb[S]$ the free abelian group generated by $S$; it is naturally a ring if $S$ is a group.  We do not make any distinction between abelian groups $A$ with additive action of the group $G$ (namely, action by group automorphisms) and $\Zb[G]$-modules $A$. For example, if $G$ acts on a set $S$ on the right, then $\Zb[S]$ has a natural structure of right $\Zb[G]$-module inherited from this action. This extends to sheaves in an obvious way. 

\begin{defi}
Let $\Cs$ be a site and let $x$ be an object of $\Cs$. We say that $x$ is \emph{of cohomological dimension $\leqslant d$} if $\Hr^i(x,\Abf)=0$ for every $i>d$ and every abelian sheaf $\Abf$ on $\Cs$.
\end{defi}

\begin{exe}\label{exe:zariski_cohomological_dimension}
Let $X$ be a scheme. Then $X$ is \emph{of Zariski cohomological dimension $\leqslant d$} if $\Hr^i(X,\mathscr{F})=0$ for every $i>d$ and every sheaf $\mathscr{F}$ of abelian groups on the topological space $X$. If the underlying topological space of $X$ is Noetherian, then by \cite[Théorème 3.6.5]{grothendieckQuelquesPointsDalgebre1957}, the scheme $X$ is of Zariski cohomological dimension $\leqslant\dim(X)$ where $\dim(X)$ is the dimension of $X$ as a scheme.
\end{exe}

Let $(X,\Osc_X)$ be a locally ringed space. We use the terminologies of line bundles on $X$ and locally free $\Osc_X$-modules of rank $1$ interchangeably. These objects form a groupoid $\underline{\Pic}(X,\Osc_X)$ (we underline $\Pic$ to distinguish this category and its set of isomorphism classes of objects which, under the operation induced by the tensor product of modules, is the Picard group of $X$), denoted by $\underline{\Pic}(X)$ when no confusion on $\Osc_X$ can arise. We denote the residue field of $X$ at a point $x$ by $\kappa(x)$ and the maximal ideal of the local ring $\Osc_{X,x}$ by $\m_x$. Suppose now that $X$ is a scheme. The set of points $x$ of $X$ of codimension $p$ (that is, such that the irreducible closed subset $\overline{\{x\}}$ of $X$ is of codimension $p$) is denoted by $X^{(p)}$. Given a line bundle $\Lc$ on the scheme $X$ and a point $x\in X$, we set $\Lc(x)=\Lc\otimes\kappa(x)$. For every $x\in X$ such that $\m_x$ is finitely generated, we denote by $\Lambda_x$ the determinant of $(\m_x/\m_x^2)^*=\Hom_{\kappa(x)}(\m_x/\m_x^2,\kappa(x))$; thus $\Lambda_x$ is a $\kappa(x)$-vector space of dimension $1$. If $X$ is regular at $x$, then $\Lambda_x=\Lambda^p(\m_x/\m_x^2)^*$ where $p$ is the codimension of $x$.

Throughout we fix a field $k$ that is perfect, infinite of characteristic not $2$. Eventually we will take $k$ to be an extension of the field $\Rb$ of real numbers so the reader may safely assume that $k$ is of characteristic $0$ in this preliminary section.

\paragraph*{Twist by a line bundle.} Let $(\Cs,\Osc)$ be an essentially small ringed site, that is, an essentially small site $\Cs$ together with a sheaf $\Osc$ of rings. The reader may wish to think of $\Cs$ as a full subcategory of the category of $X$-schemes for some scheme $X$, endowed with a subcanonical topology, and of $\Osc$ as the sheaf $\Hom_X(\text{--},\Abb_X^1)$, or of $\Cs$ as the site of open subsets of a ringed space $(X,\Osc_X)$ and of $\Osc$ as the sheaf $\Osc_X$. An $\Osc$-module $\Lc$ is then locally free of rank $1$ if every $c\in\Cs$ has a covering sieve $R$ such that for every $c'\rightarrow c$ in $R$, there exists an isomorphism $\Lc_{|c'}\simeq\Osc_{|c'}$ of modules on $\Cs/c'$ with the induced ringed site structure.

Let $\Lc$ be a locally free $\Osc$-module of rank $1$. A section $s$ of $\Lc$ over $c\in\Cs$ is \emph{invertible} if the morphism $\Osc_{|c}\rightarrow\Lc_{|c}$ of modules over $\Cs/c$ carrying $a\in\Osc(c')$ to $as_{|c'}$ is an isomorphism. We denote by $\Lc^0\subseteq\Lc$ the subpresheaf of invertible sections. Since being invertible is a local property for a given section of $\Lc$, the presheaf $\Lc^0$ is a sheaf. The sheaf $\Osc^\times:u\mapsto\Osc(u)^\times$ of units of $\Osc$ acts on $\Lc^0$ by left multiplication and $\Lc^0$ is an $\Osc^\times$-torsor for this action, that is, the $\Osc^\times$-sheaf $\Lc^0$ is locally isomorphic to $\Osc^\times$ acting on itself by left multiplication. Now let $\Asc$ be a right $\Zb[\Osc^\times]$-module. We define $\Asc(\Lc)$ as the abelian sheaf on $\Cs$ associated with the presheaf $c\mapsto\Asc(c)\otimes_{\Zb[\Osc^\times(c)]}\Zb[\Lc^0(c)]$: we call $\Asc(\Lc)$ the twist of $\Asc$ by $\Lc$. For fixed $\Lc$, the construction $\Asc\mapsto\Asc(\Lc)$ is an exact functor from the category of $\Zb[\Osc^\times]$-modules to the category of abelian sheaves on $\Cs$. This is because exactness can be checked locally for sheaves and any local isomorphism $\Lc_{|c}\simeq\Osc_{|c}$ induces an isomorphism $\Asc_{|c}\simeq\Asc(\Lc)_{|c}$ for every right $\Zb[\Osc^\times]$-module $\Asc$. Moreover, since the construction $\Asc(\Lc)=\Asc\otimes_{\Zb[\Osc^\times]}\Zb[\Lc^0]$ is given by a tensor product, the functor $\Asc\mapsto\Asc(\Lc)$ commutes with colimits.

\begin{exe}\label{exe:twist_by_topological_line_bundle}
Let $X$ be a topological space. Denote by $\Csc_X$ the sheaf on $X$ assigning to each open subset $U$ of $X$ the ring of continuous maps from $U$ to $\Rb$. The pair $(X,\Csc_X)$ is then a locally ringed space. Let $\Csc_X^\times$ denote the sheaf of units of $\Csc_X$. Given a section $f$ of $\Csc_X^\times$, we denote by $\sign(f)$ the locally constant function to $\{\pm 1\}$ carrying $x$ to $+1$ if $f(x)>0$ and to $-1$ if $f(x)<0$: this is the \emph{sign} of $f$. The assignment $f\mapsto\sign(f)$ determines a morphism of sheaves of groups from $\Csc_X^\times$ to the sheaf of locally constant functions on $X$ to $\{\pm 1\}$, which we still denote by $\{\pm 1\}$. This last sheaf acts additively on every abelian sheaf on $X$ by multiplication so every abelian sheaf on $X$ can be regarded as a $\Zb[\Csc_X^\times]$-module. Therefore for every $L\in\underline{\Pic}(X,\Csc_X)$ and every abelian sheaf $\Asc$ on $X$, the twist $\Asc(L)$ of $\Asc$ by $L$ is defined. For example, if $2\Asc(U)=0$ for every open subset $U$ of $X$, then the action of $\{\pm 1\}$ on $\Asc$ is trivial, hence so is the action of $\Csc_X^\times$ and the sheaf $\Asc(L)$ is naturally isomorphic to $\Asc$. We will be particularly interested in the case where $\Asc$ is the constant sheaf $\Zb/2^N\Zb$ for some integer $N$. For every abelian group $A$ inducing a constant sheaf on $X$, which we still denote by $A$, the sheaf $A(L)$ is locally constant with stalk $A$ as the twist $\Asc(L)$ is isomorphic to $\Asc$ when $L$ is trivial.
\end{exe}

\paragraph*{The exponents of an abelian group.} We recall the definition of this notion:

\begin{defi}
Let $A$ be an abelian group and let $e\geqslant 0$ be an integer. We write $eA$ for the image of the endomorphism $x\mapsto ex$ of $A$. We say that $A$ \emph{has exponent $e$} if $eA=0$. More generally, a presheaf $\Fbf$ of abelian groups on a category $\Cs$ has exponent $e$ if $\Fbf(x)$ has exponent $e$ for every object $x$ of $\Cs$.
\end{defi}

Exponents will often be interpreted as bounding below the size of a subgroup in the following way.

\begin{exe}\label{exe:link_exponent_size}
If $A'$ is a subgroup of an abelian group $A$, then $A/A'$ has exponent $e$ if, and only if, we have an inclusion $eA\subseteq A'$ of subgroups of $A$.
\end{exe}

\begin{rema}\label{rema:exponents_cohomology_groups}
Although local epimorphisms are not sectionwise surjective, if $\Abf\rightarrow\Bbf$ is a local epimorphism of abelian presheaves on a site $\Cs$ where $\Abf$ has exponent $e$ and $\Bbf$ is a sheaf (for example, if $\Abf\rightarrow\Bbf$ is an epimorphism of sheaves or a sheafification morphism), then $\Bbf$ also has exponent $e$. This is because for any $\sigma\in\Bbf(c)$ where $c\in\Cs$, the equality $e\sigma=0$ \emph{locally} holds as $\sigma$ locally comes from a section of $\Abf$: since $\Bbf$ is a sheaf by assumption, this guarantees that $e\sigma=0$ in $\Bbf(c)$. For example, for every abelian sheaf $\Abf$ on $\Cs$, the sheaf $\Abf/e$ has exponent $e$ since there is a local epimorphism from the presheaf $c\mapsto\Abf(c)/e$ to $\Abf/e$. We also note that if $\Abf$ is an abelian sheaf on $\Cs$ having exponent $e$, then for every $x\in\Cs$ and every $i\geqslant 0$, the multiplication by $e$ endomorphism of $\Hr^i(x,\Abf)$ is induced by functoriality by the sheaf morphism $e\cdot\Id:\Abf\rightarrow\Abf$ and is therefore the zero map: in other words, the group $\Hr^i(x,\Abf)$ has exponent $e$.
\end{rema}

\subsection{$\Abb^1$-homotopy theory}

\paragraph*{Generalities.} We denote by $\Sm_k$ the full subcategory of the category $\Sch_k$ of all $k$-schemes spanned by separated smooth $k$-schemes of finite type. A $k$-scheme $X$ is \emph{essentially smooth} if it is the limit in $\Sch_k$ of a cofiltered diagram $(X_\alpha)_\alpha$ with affine étale transition maps such that $X_\alpha\in\Sm_k$ for every $\alpha$. The full subcategory of $\Sch_k$ spanned by essentially smooth $k$-schemes is denoted by $\Sm_k'$.

\begin{exe}\label{exe:localisation_essentially_smooth}
Let $X\in\Sm_k$ and let $S$ be a finite set of points of $X$ contained in an affine open of $X$. The \emph{semi-localisation} $X_S$ of $X$ at $S$ is the limit of the affine open neighbourhoods of $S$ in $X$ (along open immersions; such a morphism is always étale, and is an affine morphism if it is a morphism of affine schemes). If $S=\{x\}$ is a singleton, we simply call $X_{\{x\}}$ the localisation of $X$ at $x$. By definition, the $k$-scheme $X_S$ is essentially smooth. For example, if $X$ is connected with function field $K$, then $\Spec K$ is the localisation of $X$ at its generic point and is therefore essentially smooth over $k$. In fact, if $K/k$ is \emph{any} finitely generated field extension, then $K$ is the localisation of some connected $U\in\Sm_k$ at its generic point. To see this, note that there exists an integral finite-type $k$-algebra $A$ such that $K=\Frac A$. Since $k$ is perfect, there exists a non-empty open subset $U$ of $\Spec A$ which is smooth over $k$ and the field $K$ is then the localisation of $U$ at its generic point. It follows that the $k$-scheme $\Spec K$ is essentially smooth.
\end{exe}

Given a presheaf $\Fbf$ of sets on $\Sm_k$, we extend $\Fbf$ to essentially smooth $k$-schemes by setting $\Fbf(X)=\colim\Fbf(X_\alpha)$ if $X=\lim X_\alpha$ (this is well-defined by \cite[Proposition (8.13.5)]{grothendieckElementsGeometrieAlgebrique1966}). For instance, for every finitely generated field extension $K/k$, the set $\Fbf(K)$ is defined and usually denoted by $\Fr(K)$.

We endow $\Sm_k$ (resp. $\Sm_k'$) with the Nisnevich topology $\Nis$ of \cite{nisnevichCompletelyDecomposedTopology1989}. We denote by $\Hr_\Nis$ sheaf cohomology for this topology and by convention, unless otherwise specified, by sheaf on $\Sm_k$, we mean sheaf for the Nisnevich topology. The category of Nisnevich abelian sheaves on $\Sm_k$ is denoted by $\Abr_k$: every object of $\Abr_k$ extends to a Nisnevich sheaf on $\Sm_k'$ as described in the previous paragraph.

\begin{rema}\label{rema:colimit_extension_essentially_smooth}
Since filtered colimits commute with finite limits and with colimits, if $f:\Abf\rightarrow\Bbf$ is a morphism of sheaves of abelian groups on $\Sm_k$, then $(\Ker f)(X)=\colim\Ker f(X_\alpha)$ and $(\Coker f)(X)=\colim\Coker f(X_\alpha)$ for every essentially smooth $k$-scheme $X=\lim X_\alpha$.
\end{rema}

Let $X$ be a scheme. The small Nisnevich site $\Nis(X)$ of $X$ is the category of étale $X$-schemes endowed with the Nisnevich topology. Note that if $X$ underlies an object of $\Sm_k'$, then for every object $U\rightarrow X$ of $\Nis(X)$, the $k$-scheme $U$ is also an object of $\Sm_k'$. We denote by $\Zar(X)$ the site of open subschemes of the topological space $X$. By convention, by sheaf \emph{on $X$}, we mean sheaf on $\Zar(X)$, namely sheaf on the topological space $X$ in the usual sense.

\begin{rema}\label{rema:nisnevich_cohomological_dimension}
The Nisnevich cohomological dimension of $X\in\Sm_k$ is its cohomological dimension as an object of $(\Sm_k,\Nis)$: it is less than or equal to $\dim(X)$ by \cite[Theorem 1.32]{nisnevichCompletelyDecomposedTopology1989}.
\end{rema}


\paragraph*{Strictly $\Abb^1$-invariant sheaves, contractions and homotopy modules.}

\begin{defi}
A sheaf $\Mbf$ of abelian groups on $\Sm_k$ is \emph{strictly $\Abb^1$-invariant} if for every $X\in\Sm_k$, the projection map $p:X\times\Abb^1\rightarrow X$ induces an isomorphism $p^*:\Hr_\Nis^i(X,\Mbf)\rightarrow\Hr_\Nis^i(X\times\Abb^1,\Mbf)$ of abelian groups for every $i\geqslant 0$. Strictly $\Abb^1$-invariant abelian sheaves form a full subcategory $\Abr_k^{\Abb^1}$ of the category $\Abr_k$ of abelian sheaves on $\Sm_k$.
\end{defi}

\begin{theo}[Morel]\label{theo:strictly_invariant_sheaves_abelian}
The category $\Abr_k^{\Abb^1}$ of strictly $\Abb^1$-invariant sheaves is abelian and the fully faithful embedding of $\Abr_k^{\Abb^1}$ into the category $\Abr_k$ of abelian sheaves on $\Sm_k$ is exact.
\end{theo}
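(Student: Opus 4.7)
The plan is to reduce the statement to showing that $\Abs_k^{\Ab^1}$ is closed under taking kernels and cokernels computed in $\Abs_k$. Once this is established, the full subcategory automatically inherits the abelian structure from $\Abs_k$, and exactness of the inclusion functor is a formal consequence: a sequence in $\Abs_k^{\Ab^1}$ will be exact if and only if it is exact in $\Abs_k$.

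I would first establish a two-out-of-three property: for every short exact sequence $0 \to \Abf' \to \Abf \to \Abf'' \to 0$ in $\Abs_k$, if two among $\Abf', \Abf, \Abf''$ are strictly $\Ab^1$-invariant, then so is the third. This follows from comparing the long exact sequences of Nisnevich cohomology of $X$ and of $X \times \Ab^1$ through the pullback $p^*$ along the projection $p : X \times \Ab^1 \to X$, and applying the five-lemma (a routine extension of Lemma \ref{lem:four_lemma}) degree by degree.

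Now let $f : \Abf \to \Bbf$ be a morphism in $\Abs_k^{\Ab^1}$ and $\Kbf = \Ker f$ its kernel in $\Abs_k$. Since sheafification preserves finite limits, $\Kbf(X) = \Ker(f(X))$ for every smooth $X$. The degree-zero part of strict $\Ab^1$-invariance for $\Abf$ and $\Bbf$, together with the naturality of the formation of kernels, imply that $p^* : \Kbf(X) \to \Kbf(X \times \Ab^1)$ is an isomorphism for every smooth $X$, so $\Kbf$ is at least $\Ab^1$-invariant in the weak sense of sections. The crux of the argument is then to invoke Morel's foundational theorem, which asserts that over a perfect field every $\Ab^1$-invariant Nisnevich sheaf of abelian groups on $\Sm_k$ is automatically strictly $\Ab^1$-invariant; this yields $\Kbf \in \Abs_k^{\Ab^1}$.

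With $\Kbf \in \Abs_k^{\Ab^1}$ in hand, the short exact sequence $0 \to \Kbf \to \Abf \to \Im f \to 0$ together with the two-out-of-three property shows that $\Im f$ is strictly $\Ab^1$-invariant, and then $0 \to \Im f \to \Bbf \to \Coker f \to 0$ and the same principle give $\Coker f \in \Abs_k^{\Ab^1}$, completing the closure argument. The main obstacle is without doubt the appeal to Morel's theorem above; this is a genuinely deep input whose proof develops the theory of strongly $\Ab^1$-invariant sheaves, unramified sheaves, and the Rost--Schmid formalism, and crucially relies on the perfectness of $k$. Every other step reduces to formal manipulations with the five-lemma and the long exact sequence of Nisnevich cohomology.
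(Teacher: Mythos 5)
The paper does not reprove this statement: it is quoted from Morel and the ``proof'' is the citation \cite[Corollary 6.24]{morelA1algebraicTopology2007}. Your reconstruction is sound in its formal skeleton --- kernels of morphisms of sheaves are computed sectionwise, the two-out-of-three property for short exact sequences follows from the five lemma applied to the long exact sequences in Nisnevich cohomology over $X$ and $X\times\Ab^1$, and the bootstrap from the kernel to the image and then to the cokernel is correct. But the step you yourself flag as the crux is a genuine gap: the assertion that over a perfect field every $\Ab^1$-invariant Nisnevich sheaf of abelian groups on $\Sm_k$ is automatically strictly $\Ab^1$-invariant is \emph{not} a theorem of Morel; it is a well-known open \emph{conjecture} of Morel. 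What Morel actually proves is that every \emph{strongly} $\Ab^1$-invariant sheaf of abelian groups --- one for which both $\Hr_\Nis^0(-,\Mbf)$ and $\Hr_\Nis^1(-,\Mbf)$ are $\Ab^1$-invariant --- is strictly $\Ab^1$-invariant. Your argument only establishes $\Ab^1$-invariance of the sections $\Kbf(X)=\Ker(f(X))$, i.e.\ of $\Hr^0$; there is no formal way to deduce $\Ab^1$-invariance of $\Hr_\Nis^1(-,\Kbf)$ from the strict invariance of $\Abf$ and $\Bbf$ (any attempt via the long exact sequence is circular, since it presupposes that $\Im f$ is already strictly $\Ab^1$-invariant). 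So the hypothesis of the theorem you would need is not verified, and the statement you actually invoke is unavailable.

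Morel's own proof avoids this issue by working in the derived category of Nisnevich sheaves: a sheaf placed in degree $0$ is an $\Ab^1$-local complex precisely when it is strictly $\Ab^1$-invariant, the $\Ab^1$-local complexes form a triangulated subcategory (hence are closed under cones), and the deep input --- resting on the connectivity theorem over a perfect field --- is that the homology sheaves of an $\Ab^1$-local complex are again strictly $\Ab^1$-invariant. Applied to the cone of $f:\Abf\rightarrow\Bbf$, whose only nonzero homology sheaves are $\Ker f$ and $\Coker f$, this gives closure under kernels and cokernels in one stroke, after which exactness of the inclusion is formal, as you say. To repair your argument you would need either to substitute this derived-category mechanism, or to establish \emph{strong} (not merely weak) $\Ab^1$-invariance of $\Ker f$ before appealing to Morel's actual theorem.
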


\begin{proof}
See \cite[Corollary 6.24]{morelA1AlgebraicTopologyField2012}.
\end{proof}

In particular, the kernel and cokernel in $\Abr_k$ of a morphism of strictly $\Abb^1$-invariant sheaves are again strictly $\Abb^1$-invariant sheaves.

\begin{defi}
Let $\Mbf$ be a sheaf of abelian groups on $\Sm_k$. The \emph{contraction} of $\Mbf$ is the sheaf $\Mbf_{-1}$ given by $\Mbf_{-1}(X)=\Coker(p^*:\Mbf(X)\rightarrow\Mbf(X\times\mathbb{G}_{m,k}))$ where $p:X\times\mathbb{G}_{m,k}\rightarrow X$ is the projection. We can iterate this construction and define $\Mbf_{-n}$ as the sheaf obtained from $\Mbf$ by applying the contraction $n$ times for every $n\geqslant 0$ (hence $\Mbf_{-0}=\Mbf$). These constructions extend to endofunctors $\Mbf\mapsto\Mbf_{-n}$ of $\mathsf{Ab}_k$.
\end{defi}

\begin{prop}
If $\Mbf$ is a strictly $\Abb^1$-invariant sheaf on $\Sm_k$, then so is its contraction $\Mbf_{-1}$.
\end{prop}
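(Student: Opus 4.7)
The plan is to realise $\Mbf_{-1}$ as a natural direct summand of $\Mbf(-\times\Gm)$ at every cohomological degree, and then to deduce strict $\Ab^1$-invariance of $\Mbf_{-1}$ from that of $\Mbf$ by cancellation.

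First I would observe that the projection $p_X:X\times\Gm\to X$ admits the canonical section $s_X:X\to X\times\Gm$, $x\mapsto(x,1)$, so $p_X^*:\Mbf(X)\to\Mbf(X\times\Gm)$ is split injective with retraction $s_X^*$. This already realises $\Mbf_{-1}(X)=\Coker(p_X^*)$ as a natural direct summand of $\Mbf(X\times\Gm)$, functorially in $X\in\Sm_k$.

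Next I would upgrade this decomposition to higher cohomology. Writing $j:X\times\Gm\hookrightarrow X\times\Ab^1$ for the open immersion complementary to the zero section, the localisation long exact sequence reads
\[\cdots\to\Hr^n_{X\times\{0\}}(X\times\Ab^1,\Mbf)\to\Hr^n(X\times\Ab^1,\Mbf)\xrightarrow{j^*}\Hr^n(X\times\Gm,\Mbf)\to\cdots.\]
Since the normal bundle of $X\times\{0\}$ in $X\times\Ab^1$ is canonically trivial, homotopy purity for strictly $\Ab^1$-invariant sheaves (an input from Morel's framework; equivalently, it is what the Rost--Schmid complex of Remark \ref{rema:notation_cohomology_groups} encodes) identifies $\Hr^n_{X\times\{0\}}(X\times\Ab^1,\Mbf)$ with $\Hr^{n-1}(X,\Mbf_{-1})$. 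Strict $\Ab^1$-invariance of $\Mbf$ identifies $\Hr^n(X\times\Ab^1,\Mbf)$ with $\Hr^n(X,\Mbf)$, and the composite $\Hr^n(X,\Mbf)\to\Hr^n(X\times\Ab^1,\Mbf)\xrightarrow{j^*}\Hr^n(X\times\Gm,\Mbf)$ is $p_X^*$, which is split by $s_X^*$. The long exact sequence therefore breaks into natural, split short exact sequences, giving
\[\Hr^n(X\times\Gm,\Mbf)\cong\Hr^n(X,\Mbf)\oplus\Hr^n(X,\Mbf_{-1}).\]

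Finally I would apply this decomposition both to $Y$ and to the smooth scheme $Y\times\Ab^1$, and combine it with strict $\Ab^1$-invariance of $\Mbf$ applied to $Y$ and to $Y\times\Gm$. The chain
\begin{align*}
\Hr^n(Y,\Mbf)\oplus\Hr^n(Y,\Mbf_{-1})&\cong\Hr^n(Y\times\Gm,\Mbf)\\
&\cong\Hr^n(Y\times\Gm\times\Ab^1,\Mbf)\\
&\cong\Hr^n(Y\times\Ab^1,\Mbf)\oplus\Hr^n(Y\times\Ab^1,\Mbf_{-1})\\
&\cong\Hr^n(Y,\Mbf)\oplus\Hr^n(Y\times\Ab^1,\Mbf_{-1})
\end{align*}
of natural isomorphisms is summand-by-summand compatible with the projection $Y\times\Ab^1\to Y$, so cancelling the common $\Hr^n(Y,\Mbf)$ factor shows that $\pi^*:\Hr^n(Y,\Mbf_{-1})\to\Hr^n(Y\times\Ab^1,\Mbf_{-1})$ is an isomorphism for every smooth $Y$ and every $n$, which is strict $\Ab^1$-invariance of $\Mbf_{-1}$. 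The main obstacle is the second step: invoking Gysin/homotopy purity for general strictly $\Ab^1$-invariant sheaves is the nontrivial ingredient, but once it is available the rest of the argument is purely formal.
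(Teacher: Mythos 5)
The paper does not argue this proposition from scratch: its proof is the one-line combination of \cite[Lemma 2.32]{morelA1AlgebraicTopologyField2012} (the contraction of a strongly $\Ab^1$-invariant sheaf is strongly $\Ab^1$-invariant) with \cite[Corollary 5.45]{morelA1AlgebraicTopologyField2012} (a strongly $\Ab^1$-invariant sheaf of abelian groups is strictly $\Ab^1$-invariant). Your route is different, and its formal skeleton is fine: the splitting of $p_X^*$ by the unit section, the asserted decomposition $\Hr^n(X\times\Gm,\Mbf)\cong\Hr^n(X,\Mbf)\oplus\Hr^n(X,\Mbf_{-1})$, and the cancellation comparing $Y$ with $Y\times\Ab^1$ are all correct and natural \emph{provided} that decomposition holds with the second summand being genuine sheaf cohomology of the sheaf $\Mbf_{-1}$.

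That proviso is where the gap sits, and it is not just a missing reference. The purity identification $\Hr^n_{X\times\{0\}}(X\times\Ab^1,\Mbf)\cong\Hr^{n-1}(X,\Mbf_{-1})$ is obtained in Morel's framework by observing that the support subcomplex of the Rost--Schmid complex of $X\times\Ab^1$ is the shifted Rost--Schmid complex of $X$ with coefficients in $\Mbf_{-1}$; but identifying the cohomology of \emph{that} complex with the sheaf cohomology $\Hr^{n-1}(X,\Mbf_{-1})$ requires knowing that the Rost--Schmid complex of $\Mbf_{-1}$ is a resolution of $\Mbf_{-1}$, which is precisely the input that gives (strong, hence strict) $\Ab^1$-invariance of $\Mbf_{-1}$. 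Equivalently, on the homotopy-theoretic side, the statement that the reduced $\Gm$-summand of $\Hr^*(X\times\Gm,\Mbf)$ is computed by a \emph{sheaf} $\Mbf_{-1}$ with the expected cohomology is the delooping statement one is trying to prove. So, as written, your ``nontrivial ingredient'' already contains the conclusion and the argument is circular. To repair it, either supply an independent proof of purity in the required form, or follow Morel's actual order of deductions: first show that $\Mbf_{-1}=\Ker(\ev_1:\Mbf(\text{--}\times\Gm)\to\Mbf)$ is strongly $\Ab^1$-invariant, then invoke the equivalence of strong and strict $\Ab^1$-invariance for abelian sheaves; once that is in place, your splitting-and-cancellation argument is a clean way to package the cohomological consequences, but it no longer carries the weight of the proof.
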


\begin{proof}
Combine \cite[Lemma 2.32, Corollary 5.45]{morelA1AlgebraicTopologyField2012}.
\end{proof}

\begin{defi}
A \emph{homotopy module} is a $\Zb$-graded strictly $\Abb^1$-invariant sheaf $(\Mbf_n)_{n\in\Zb}$ together with a family $\omega_n:\Mbf_{n-1}\rightarrow(\Mbf_n)_{-1}$ of sheaf isomorphisms. A morphism of homotopy modules from $(\Mbf,\omega)$ to $(\Mbf',\omega')$ is a morphism $f:\Mbf\rightarrow\Mbf'$ of graded sheaves (homogeneous of degree $0$) such that $\omega'_n\circ f_{n-1}=(f_n)_{-1}\circ\omega_n$ for every $n$. We denote the category of homotopy modules by $\mathsf{HM}(k)$.
\end{defi}

\begin{exe}
Given a homotopy module $(\Mbf,\omega)$, for every $m\in\Zb$, we denote by $\Mbf_{*+m}$ the homotopy module given by $(\Mbf_{*+m})_n=\Mbf_{n+m}$ and the isomorphisms $(\Mbf_{*+m})_{n-1}=\Mbf_{n+m-1}\xrightarrow{\omega}(\Mbf_{n+m})_{-1}=((\Mbf_{*+m})_n)_{-1}$ of sheaves.
\end{exe}

\begin{prop}\label{prop:category_of_homotopy_modules}
The category of homotopy modules is abelian and its embedding in the category of graded abelian sheaves is exact.
\end{prop}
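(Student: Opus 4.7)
The plan is to reduce the statement to Theorem \ref{theo:strictly_invariant_sheaves_abelian} by first establishing that the contraction functor $(-)_{-1}$ is an \emph{exact} endofunctor of $\Abs_k^{\Ab^1}$. Once this is granted, kernels and cokernels in $\mathsf{HM}(k)$ will be computed degree by degree, and the structure isomorphisms $\omega_n$ will restrict and descend automatically.

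First I would prove exactness of contraction. The unit section $\{1\} \hookrightarrow \Gm$ splits the projection $p : X \times \Gm \to X$ for every $X \in \Sm_k$, so $p^* : \Mbf(X) \to \Mbf(X \times \Gm)$ is a split monomorphism natural in $X$. This yields a functorial direct-sum decomposition of Nisnevich sheaves $(X \mapsto \Mbf(X \times \Gm)) \simeq \Mbf \oplus \Mbf_{-1}$. The functor $\Mbf \mapsto (X \mapsto \Mbf(X \times \Gm))$ being exact---pullback along the smooth morphism $- \times \Gm$ preserves Nisnevich exactness, as may be checked on Henselian stalks---its direct summand $(-)_{-1}$ is also exact, and strict $\Ab^1$-invariance is preserved by the preceding proposition.

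Given a morphism $f = (f_n) : (\Mbf, \omega) \to (\Mbf', \omega')$ in $\mathsf{HM}(k)$, I would then define $(\Ker f)_n := \Ker(f_n)$ and $(\Coker f)_n := \Coker(f_n)$, computed in $\Abs_k^{\Ab^1}$ thanks to Theorem \ref{theo:strictly_invariant_sheaves_abelian}. Exactness of contraction gives natural identifications $((\Ker f)_n)_{-1} = \Ker((f_n)_{-1})$ and $((\Coker f)_n)_{-1} = \Coker((f_n)_{-1})$. The relation $\omega'_n \circ f_{n-1} = (f_n)_{-1} \circ \omega_n$, combined with the fact that $\omega_n$ and $\omega'_n$ are isomorphisms, then forces $\omega_n$ to restrict to an isomorphism $(\Ker f)_{n-1} \xrightarrow{\sim} ((\Ker f)_n)_{-1}$, and dually to descend to an isomorphism $(\Coker f)_{n-1} \xrightarrow{\sim} ((\Coker f)_n)_{-1}$, via short diagram chases using injectivity of $\omega'_n$ (respectively surjectivity of $\omega_n$). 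Equipping $\Ker f$ and $\Coker f$ with these restricted structure isomorphisms produces homotopy modules satisfying the appropriate universal properties, and the coimage-image identification in $\mathsf{HM}(k)$ follows because it holds degree by degree in $\Abs_k^{\Ab^1}$.

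The main obstacle is the exactness of the contraction functor; the remaining verifications are formal diagram chases that respect the grading automatically. The exactness of the embedding into graded abelian sheaves then follows by composing with Morel's exact inclusion $\Abs_k^{\Ab^1} \hookrightarrow \Abs_k$.
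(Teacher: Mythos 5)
Your overall strategy is sound and is genuinely different from the paper's: the paper disposes of the statement in one line by invoking Morel's equivalence between $\mathsf{HM}(k)$ and the heart of the homotopy $t$-structure on the stable $\Ab^1$-homotopy category $\mathcal{SH}(k)$, whereas you propose a direct construction of kernels and cokernels degree by degree. Once exactness of the contraction functor on $\Abs_k^{\Ab^1}$ is granted, your diagram chases are correct: left exactness gives $(\Ker f_n)_{-1}=\Ker((f_n)_{-1})$, right exactness gives $(\Coker f_n)_{-1}=\Coker((f_n)_{-1})$, and the squares $\omega'_n\circ f_{n-1}=(f_n)_{-1}\circ\omega_n$ together with the invertibility of $\omega_n,\omega'_n$ force the structure maps to restrict and descend to isomorphisms.

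The gap is in your justification of that key exactness. The functor $\Mbf\mapsto(X\mapsto\Mbf(X\times\Gm))$ is not a pullback: for each $X$ it is the pushforward $(p_X)_*$ along $p_X:X\times\Gm\rightarrow X$ (equivalently the internal hom $\underline{\Hom}(\Gm,-)$), hence only left exact in general. Its Nisnevich stalk at a henselian local $U$ is $\Mbf(U\times\Gm)$, and $U\times\Gm$ is \emph{not} henselian local; the obstruction to surjectivity of $\Mbf(U\times\Gm)\rightarrow\Mbf''(U\times\Gm)$ for an epimorphism $\Mbf\rightarrow\Mbf''$ with kernel $\Mbf'$ is $\Hr^1_{\Nis}(U\times\Gm,\Mbf')$, which does not vanish for arbitrary abelian Nisnevich sheaves. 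So "checked on Henselian stalks" does not close the argument. The vanishing \emph{does} hold when $\Mbf'$ is strictly $\Ab^1$-invariant (compare $U\times\Gm$ with $U\times\Ab^1$ via the localisation sequence, and use $\Ab^1$-invariance together with purity, $\Hr^2_{U\times 0}(U\times\Ab^1,\Mbf')\cong\Hr^1(U,\Mbf'_{-1})=0$), and exactness of $\Mbf\mapsto\Mbf_{-1}$ on strictly $\Ab^1$-invariant sheaves is a theorem of Morel that you should cite rather than derive from a formal stalk computation. With that reference supplied, your proof is complete and more self-contained than the paper's, at the cost of hiding the same depth (Morel's structure theory) inside the exactness of contraction instead of inside the identification of $\mathsf{HM}(k)$ with a heart.
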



\begin{proof}
Denote by $\mathcal{SH}(k)$ the $\mathbb{P}^1$-stable $\Abb^1$-homotopy category of \cite{morelIntroductionA1homotopyTheory2003}. This is a triangulated category, and the Eilenberg--Mac Lane spectrum construction induces an equivalence from the category $\mathsf{HM}(k)$ to the heart of $\mathcal{SH}(k)$, see \cite[Theorem 5.2.6]{morelIntroductionA1homotopyTheory2003}, which is then abelian as is the heart of every triangulated category.
\end{proof}

\paragraph*{Abelian sheaves with $\Gm$-action.} We endow $\Sm_k$ with the sheaf of rings $\Osc_{k}:U\mapsto\Osc_U(U)$ represented by $\Abb_k^1$. Let $\Mbf$ be an abelian sheaf on $\Sm_k$ with an additive action of $\Osc_{k}^\times$ on the right. Let $X$ be an essentially smooth $k$-scheme and let $\Lc$ be a line bundle on $X$. Then $\Lc$ induces a locally free module of rank $1$ over the ringed site $(\Nis(X),\Osc_{\Nis(X)}:U\mapsto\Osc_U(U))$, whose sections over $f:U\rightarrow X$ are the global sections of the $\Osc_U$-module $f^*\Lc$; we still denote this $\Osc_{\Nis(X)}$-module by $\Lc$. We write $\Mbf(\Lc)$ for the twist of the restricted sheaf $\Mbf_{|\Nis(X)}$ with $\Osc_{\Nis(X)}^\times$-action by the locally free $\Osc_{\Nis(X)}$-module $\Lc$ of rank $1$. One can also consider the restriction $\Mbf_{|\Zar(X)}$ of $\Mbf$ to the ringed site $(\Zar(X),\Osc_X)$ and twist this restriction by the line bundle $\Lc$. It then follows from the proof of \cite[Lemma 2.24]{hornbostelRealCycleClass2021} that the resulting sheaf on $X$ is the restriction of $\Mbf(\Lc)$ to $\Zar(X)$ so both sheaves will be denoted by $\Mbf(\Lc)$. For instance, given a finitely generated field extension $F/k$ and an $F$-vector space $L$ of dimension $1$ inducing a line bundle $\Lc$ on $\Spec F$, the twisted group $\Hr^0(\Spec F,\Mbf(\Lc))$ of sections is defined and is usually denoted by $\Mr(F,L)$. For fixed $\Lc\in\underline{\Pic}(X)$, the construction $\Mbf\mapsto\Mbf(\Lc)$ is functorial with respect to morphisms of $\Zb[\Osc_{k}^\times]$-modules on $\Sm_k$ and induces an exact functor from the category of $\Zb[\Osc_k^\times]$\hbox{-}modules on $\Sm_k$ to the category of abelian sheaves on $\Nis(X)$ (respectively on $\Zar(X)$) which commutes with colimits.

\begin{exe}\label{exe:gm_action_on_contraction}
Let $\Mbf$ be an abelian sheaf on $\Sm_k$. Then the contraction $\Mbf_{-1}$ has a natural additive action of $\Osc_k^\times$, described \emph{e.g.} at \cite[p. 78]{morelA1AlgebraicTopologyField2012}, and can thus be twisted by a line bundle.
\end{exe}

\paragraph*{Rost--Schmid complexes.} Let $\Mbf$ be a strictly $\Abb^1$-invariant sheaf and let $U\in\Sm_k'$. Then the \emph{Rost--Schmid complex} $\Cr_\RS(U;\Mbf)$ is a complex of abelian groups constructed in \cite[Chapter 5]{morelA1AlgebraicTopologyField2012}. Its components are of the form \[\Cr_\RS(U;\Mbf)^p=\bigoplus_{x\in U^{(p)}}\Mr_{-p}(\kappa(x),\Lambda_x)\] where we recall that $\Lambda_x$ is the determinant of $(\m_x/\m_x^2)^*$, and its differentials are defined using residue homomorphisms and transfers. This construction is contravariant in étale morphisms. In particular, for every essentially smooth $k$-scheme $X$, this yields presheaves $U\mapsto\Cr_\RS(U;\Mbf)^p$ on $\Nis(X)$ and on $\Zar(X)$ that are in fact sheaves for the Nisnevich and Zariski topology respectively. Localisation at the codimension $0$ points yields a map $\Mbf(U)\rightarrow\bigoplus_{x\in U^{(0)}}\Mr(\kappa(x))$ for every $U\in\Nis(X)$ (the $\kappa(x)$-vector space $\Lambda_x$ is \emph{canonically} isomorphic to $\kappa(x)$ if $x$ has codimension $0$). The resulting complex $0\rightarrow\Mbf_{|Nis(X)}\rightarrow\Cr_\RS(\text{--};\Mbf)^0\rightarrow\cdots\rightarrow\Cr_\RS(\text{--};\Mbf)^d\rightarrow 0$ is then a resolution of $\Mbf_{|\Nis(X)}$, and restricts to a resolution of $\Mbf_{|\Zar(X)}$, by flasque sheaves, see \cite[Theorem 5.41, Lemma 5.42]{morelA1AlgebraicTopologyField2012}. Consequently, for every $i$, the Nisnevich and Zariski cohomology groups of $X$ with coefficients in $\Mbf$ in degree $i$ are both given by the cohomology group $\Hr^i(\Cr_\RS(X;\Mbf))$ of the complex $\Cr_\RS(X;\Mbf)$ (\cite[Corollary 5.43]{morelA1AlgebraicTopologyField2012}). We denote these coinciding Nisnevich and Zariski cohomology groups by $\Hr^i(X,\Mbf)$.

Now assume that $\Mbf\cong\Nbf_{-1}$ is the contraction of a strictly $\Abb^1$-invariant sheaf $\Nbf$. Then as noted in Example \ref{exe:gm_action_on_contraction}, the sheaf $\Mbf$ has a natural $\Zb[\Osc_{k}^\times]$-module structure: consequently, every component and differential of the Rost--Schmid complex of $\Mbf$ on an essentially smooth $k$-scheme $X$, including the cohomological degree $0$, can be twisted by any line bundle $\Lc$ on $X$, see \cite[Remark 5.13, 5.14]{morelA1AlgebraicTopologyField2012}. This yields a complex $\Cr_\RS(Y,f^*\Lc;\Mbf)$ for every object $f:Y\rightarrow X$ of $\Nis(X)$, usually more simply denoted by $\Cr_\RS(Y,\Lc;\Mbf)$, and thus a complex $0\rightarrow\Mbf(\Lc)\rightarrow\Cr_\RS(\text{--},\Lc;\Mbf)^0\rightarrow\cdots\rightarrow\Cr_\RS(\text{--},\Lc;\Mbf)^d\rightarrow 0$ of sheaves on $\Nis(X)$ and on $\Zar(X)$. As in the untwisted case, this complex is in fact a flasque resolution of the restriction of $\Mbf(\Lc)$ to $\Nis(X)$ (resp. to $\Zar(X)$). Consequently, as before, the Nisnevich and Zariski cohomology groups of $X$ with coefficients in $\Mbf(\Lc)$ coincide and are denoted by $\Hr^*(X,\Mbf(\Lc))$, and they can be computed using the Rost--Schmid complex: the group $\Hr^i(X,\Mbf(\Lc))$ coincides with $\Hr^i(\Cr_\RS(X,\Lc;\Mbf))$ for every $i$. This twisted case follows from Morel's results.

The Rost--Schmid complex allows us to deduce many properties of strictly $\Abb^1$-invariant sheaves from corresponding properties of their values on \emph{fields}. Here is a very easy incarnation of this principle.

\begin{exe}
Let $\Mbf$ be an abelian sheaf on $\Sm_k$. If $\Mr(F)=0$ for every finitely generated field extension $F/k$, then $\Mbf=0$ since $\Mbf(U)$ embeds into $\Cr_\RS(U;\Mbf)^0=\bigoplus_{x\in U^{(0)}}\Mr(\kappa(x))$ for every $U\in\Sm_k$ (see also \cite[Lemma 3.45]{morelA1AlgebraicTopologyField2012}; it would suffice to know that strictly $\Abb^1$-invariant sheaves are unramified in the sense of \cite[Definition 2.1]{morelA1AlgebraicTopologyField2012}). Thus by Remark \ref{rema:colimit_extension_essentially_smooth}, a morphism $f:\Mbf\rightarrow\Mbf'$ of strictly $\Abb^1$-invariant sheaves is an isomorphism if, and only if, for every finitely generated field extension $F/k$, the morphism $f(F):\Mr(F)\rightarrow\Mr'(F)$ induced by $f$ is an isomorphism of abelian groups.

Note that if $\Mbf$ is a contraction, then we have a non-canonical isomorphism $\Mr(F,L)\simeq\Mr(F)$ of groups for every $F$-vector space $L$ of dimension $1$. Consequently, if $\Mr(F)=0$ for every finitely generated field extension $F/k$, then $\Mbf(\Lc)$ is the zero sheaf on $\Nis(X)$ for every line bundle $\Lc$ on an essentially smooth $k$-scheme $X$.
\end{exe}

\subsection{Milnor--Witt $\Kr$-theory}

Recall from \cite[Definition 3.1]{morelA1AlgebraicTopologyField2012} the definition of the Milnor--Witt $\Kr$-theory $\Kr_*^\MW(F)$ of a field $F$. It is a $\Zb$-graded ring defined by generators $[a]$ in degree $+1$ for every $a\in F^\times$ and $\eta$ in degree $-1$, and relations. The map $F^\times\rightarrow\Kr_0^\MW(F)$ carrying $a$ to $1+\eta[a]$ is multiplicative so one defines an additive action of $F^\times$ on $\Kr_0^\MW(F)$ on the right by setting $\gamma\cdot a=\gamma(1+\eta[a])$, the product being taken in the ring $\Kr_0^\MW(F)$. Using the product $\Kr_n^\MW(F)\times\Kr_0^\MW(F)\rightarrow\Kr_n^\MW(F)$ underlying the graded ring $\Kr_*^\MW(F)$, we obtain a $\Zb[F^\times]$-module structure on $\Kr_n^\MW(F)$ for every $n\geqslant 0$ and thus a twisted group $\Kr_n^\MW(F,L)$ for every $F$-vector space $L$ of dimension $1$. This twisted group sits in a fibre product diagram (\cite[Theorem 5.3]{morelPuissancesLidealFondamental2004}) with exact row:
\begin{equation}\label{eq:mw_fibre_product_field}
\begin{tikzcd}
            &                          & \Kr_n^\MW(F,L) \arrow[r] \arrow[d,two heads] \arrow[dr, phantom, "\usebox\pullback" , very near start, color=black] & \Kr_n^\Mr(F) \arrow[d,two heads] & \\
0 \arrow[r] & \Ir^{n+1}(F,L) \arrow[r] & \Ir^n(F,L) \arrow[r] & \overline{\Ir}^n(F) \arrow[r] & 0
\end{tikzcd}
\end{equation}

Let us explain the objects that appear in this square. The group $\Kr_n^\Mr(F)$ is the $n$-th Milnor $\Kr$-theory group of $F$ introduced in \cite{milnorAlgebraicKtheoryQuadratic1970} and thus vanishes if $n<0$; the group $F^\times$ acts trivially on $\Kr_n^\Mr(F)$. If $n\geqslant 1$, we let $\Ir^n(F)$ denote the $n$-th power of the ideal $\Ir(F)$ of even rank forms in the Witt ring $\Wr(F)$ of symmetric bilinear forms on $F$ and we set $\Ir^n(F)=\Wr(F)$ if $n\leqslant 0$. The group $F^\times$ acts on $\Wr(F)$ by multiplication by the Witt class of the rank one forms $\langle a\rangle:(x,y)\mapsto axy$ for $a\in F^\times$. By its very definition, this action preserves the rank, thus it induces an action on $\Ir(F)$, hence on $\Ir^n(F)$ for every $n$. Consequently, the twisted groups $\Ir^n(F,L)$ are defined. The inclusion $\Ir^{n+1}(F)\rightarrow\Ir^n(F)$ is $F^\times$-equivariant hence there is an induced action on $\overline{\Ir}^n(F)=\Ir^n(F)/\Ir^{n+1}(F)$. This action is trivial by \cite[Lemme E.1.3]{faselGroupesChowWitt2008} thus there is a natural isomorphism $\overline{\Ir}^n(F,L)\cong\overline{\Ir}^n(F)$. The bottom right horizontal map of Diagram (\ref{eq:mw_fibre_product_field}) is then the quotient map $\Ir^n(F,L)\rightarrow\overline{\Ir}^n(F)$. The right vertical map is onto, given on symbols in Milnor $\Kr$-theory by $\{a_1,\ldots,a_n\}\mapsto\llangle a_1\rrangle\otimes\cdots\otimes\llangle a_n\rrangle$ where $\llangle a\rrangle=\langle 1,-a\rangle$.

The abelian groups that appear in the above fibre product square underly data that satisfy the hypotheses of \cite[Theorem 2.46]{morelA1AlgebraicTopologyField2012}. For Milnor--Witt $\Kr$-theory, this is checked in \cite[Section 3.2]{morelA1AlgebraicTopologyField2012}. It follows for the other data by \cite[Lemma 3.32]{morelA1AlgebraicTopologyField2012} since, by \cite[Examples 3.33, 3.34]{morelA1AlgebraicTopologyField2012}, they are \emph{unramified $\Kr^{\mathcal{R}}$-theories} in the sense of \cite[p. 69]{morelA1AlgebraicTopologyField2012}. In particular, we have residue homomorphisms for Milnor--Witt $\Kr$-theory (depending on a choice of uniformising parameter) that, in view of the defining formulas of \cite[Theorem 3.15]{morelA1AlgebraicTopologyField2012}, \cite[Lemma 2.1]{milnorAlgebraicKtheoryQuadratic1970} and \cite[Chapter IV, Lemmas (1.2) and (1.4)]{milnorSymmetricBilinearForms1973}, induce the usual residue for Milnor $\Kr$-theory and for the powers of the fundamental ideal of the Witt ring. According to \cite[Theorem 2.46]{morelA1AlgebraicTopologyField2012}, there are induced unramified sheaves $\Kbf_n^\MW$, $\Kbf_n^\Mr$, $\Ibf^n$ and $\overline{\Ibf}^n$ which are strictly $\Abb^1$-invariant as a consequence of \cite[Theorem 2.46, Corollary 5.45]{morelA1AlgebraicTopologyField2012}. For every $n\leqslant 0$, we write $\Ibf^n=\Wbf$, and we set $\overline{\Ibf}^0=\overline{\Wbf}$. The sheaves $\Kbf_n^\MW$, $\Kbf_n^\Mr$, $\Ibf^n$ and $\overline{\Ibf}^n$ underly $\Zb[\Osc_{k}^\times]$-modules (the action of $\Osc_k^\times$ on $\Kbf_n^\Mr$ and on $\overline{\Ibf}^n$ being trivial) and can therefore be twisted by a line bundle. We have the following computation of contractions.

\begin{prop}
For every $n\in\Zb$, there is an isomorphism $(\Kbf_n^\MW)_{-1}\cong\Kbf_{n-1}^\MW$ of sheaves on $\Sm_k$ which descends to isomorphisms of sheaves of the form $(\Kbf_n^\Mr)_{-1}\cong\Kbf_{n-1}^\Mr$, $(\Ibf^n)_{-1}\cong\Ibf^{n-1}$ and $(\overline{\Ibf}^n)_{-1}\cong\overline{\Ibf}^{n-1}$.
\end{prop}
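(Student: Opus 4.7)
The plan is to reduce the statement to computations on finitely generated field extensions, exploiting the principle (just recalled in the preceding Example) that a morphism between strictly $\Ab^1$-invariant sheaves is an isomorphism as soon as it induces an isomorphism on sections over every finitely generated field extension $F/k$. Since all four sheaves involved are strictly $\Ab^1$-invariant, and contraction preserves this property (as quoted just above), it is enough to produce the isomorphisms on fields, naturally in field extensions.

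First I handle $\Kbf_n^\MW$. For a finitely generated field extension $F/k$, Morel's computation of Milnor--Witt $\Kr$-theory of a Laurent polynomial ring (\cite[\S 3]{morelA1AlgebraicTopologyField2012}) gives a canonical split short exact sequence
\[
0 \to \Kr_n^\MW(F) \xrightarrow{p^*} \Kr_n^\MW(F[t,t^{-1}]) \to \Kr_{n-1}^\MW(F) \to 0,
\]
where the right-hand map is the residue at $t=0$ and is split by $\alpha \mapsto [t]\cdot\alpha$ (with $[t]\in\Kr_1^\MW(F[t,t^{-1}])$ the canonical symbol). Taking cokernels of $p^*$ then yields $(\Kbf_n^\MW)_{-1}(F)\cong \Kr_{n-1}^\MW(F)$, and the isomorphism, being implemented by multiplication by the canonical class $[t]$, is natural in $F$. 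Applying the principle recalled above to the morphism of strictly $\Ab^1$-invariant sheaves $\Kbf_{n-1}^\MW\to(\Kbf_n^\MW)_{-1}$ thus constructed gives the desired sheaf isomorphism.

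For the descent statement, I rely on two inputs. First, the fibre product square characterising $\Kr_n^\MW(F,L)$ is natural in $F$ and $\Gm$-equivariant; concretely, the morphisms $\Kbf_n^\MW \to \Kbf_n^\Mr$ and $\Kbf_n^\MW \to \Ibf^n$ and $\Ibf^n\to\overline{\Ibf}^n$ are all $\Gm$-equivariant, and the corresponding quotient $\Kbf_n^\MW\to\Kbf_n^\Mr$ sends $[t]$ to the Milnor symbol $\{t\}$. Second, contraction is exact as an endofunctor of the abelian category $\Abs_k^{\Ab^1}$: given an exact sequence of strictly $\Ab^1$-invariant sheaves, the sequence obtained by applying the cokernel of $p^*$ remains exact, as $p^*$ is itself a morphism in an abelian category and the snake lemma (together with strict $\Ab^1$-invariance ensuring $\Mbf(X)\hookrightarrow\Mbf(X\times\Gm)$ in the relevant cases) governs the situation. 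Combining these two facts with the isomorphism already established for $\Kbf_n^\MW$ and the compatibility $[t]\mapsto\{t\}$ propagates the splitting, identifying the first summand with the image of $p^*$ and the second with $\Kr_{n-1}^\Mr(F)$, $\Ir^{n-1}(F)$ and $\overline{\Ir}^{n-1}(F)$ respectively. Applying the strictly-$\Ab^1$-invariant-sheaves-are-determined-on-fields principle once more concludes.

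The one point demanding care is the exactness of the contraction functor on our particular short exact sequence $0\to\Ibf^{n+1}\to\Ibf^n\to\overline{\Ibf}^n\to 0$, since \emph{a priori} $\Coker(p^*)$ could fail to be right exact. This is overcome by noting that $p^*$ admits a functorial retraction (evaluation at $1\in\Gm$), so the sequence $0\to\Mbf\xrightarrow{p^*}\Mbf(-\times\Gm)\to\Mbf_{-1}\to 0$ splits functorially, making contraction an exact functor between the relevant abelian categories; this is the only mild subtlety, and after it the rest is bookkeeping.
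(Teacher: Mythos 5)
Your argument is correct, but it takes a genuinely longer road than the paper, whose entire proof is a citation of \cite[Lemma 2.48]{morelA1AlgebraicTopologyField2012}: Morel's lemma produces the isomorphism $(\Mbf_n)_{-1}\cong\Mbf_{n-1}$ uniformly for every unramified $\Kr^{\mathcal{R}}$-theory, so it applies simultaneously to $\Kbf_*^{\MW}$, $\Kbf_*^{\Mr}$, $\Ibf^*$ and $\overline{\Ibf}^*$ and no descent step is needed. What you do instead is essentially unroll Morel's proof for $\Kbf^{\MW}$ --- the split exact sequence over $\Gm$ with splitting $\alpha\mapsto[t]\cdot\alpha$, which does define a morphism of sheaves $\Kbf_{n-1}^{\MW}\rightarrow(\Kbf_n^{\MW})_{-1}$ and not merely a map on fields, so the reduction to finitely generated extensions is legitimate --- and then transport the conclusion to the three quotient theories via $\Gm$-equivariance of the comparison maps and exactness of contraction. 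Both inputs are sound, and your justification of exactness is the standard one: the retraction $\ev_1$ splits $\Mbf(-\times\Gm)\cong\Mbf\oplus\Mbf_{-1}$ functorially, so $(-)_{-1}$ is a direct summand of the left exact functor $\Mbf\mapsto\Mbf(-\times\Gm)$, whose right exactness on strictly $\Ab^1$-invariant sheaves still requires the vanishing of $\Hr^1(\Gm{}_F,\Abf)$ (a quotient of $\Hr^1(\Ab^1_F,\Abf)=0$ by the Rost--Schmid complex) --- this is a genuine theorem of Morel's rather than pure diagram chasing, and it is the one place where ``the rest is bookkeeping'' undersells the work. The trade-off is clear: the paper's citation is shorter and uniform, while your route is self-contained modulo the single computation of unramified Milnor--Witt $\Kr$-theory over $\Gm$, at the cost of having to spell out the presentations $\Kbf_n^{\Mr}=\Coker(\eta)$, $\Ibf^n=\Im(\Kbf_n^{\MW}\rightarrow\Wbf)$ and $\overline{\Ibf}^n=\Ibf^n/\Ibf^{n+1}$ in the descent.
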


\begin{proof}
This is the isomorphism of \cite[Lemma 2.48]{morelA1AlgebraicTopologyField2012}.
\end{proof}

Consequently we get homotopy modules $\Kbf_*^\MW$, $\Kbf_*^\Mr$, $\Ibf^*$ and $\overline{\Ibf}^*$.

\begin{rema}
The $\Osc_{k}^\times$-actions that these sheaves acquire as contractions as explained in Example \ref{exe:gm_action_on_contraction} are the ones induced by the $F^\times$-module structure on $\Kr_0^\MW(F)$ \cite[Lemma 3.49]{morelA1AlgebraicTopologyField2012}.
\end{rema}

\paragraph*{Chow--Witt groups.} Using the $\Zb[\Osc_{k}^\times]$-module structures described above, one constructs twisted sheaves $\Kbf_n^\MW(\Lc)$ and $\Ibf^n(\Lc)$ for every locally free module $\Lc$ of rank $1$ on an essentially smooth $k$-scheme $X$ (we recall that $\Osc_{k}^\times$ acts trivially on $\Kbf_*^\Mr$ and $\overline{\Ibf}^*$). There is a fibre product square:
\begin{equation}\label{eq:mw_fibre_product}
\begin{tikzcd}
\Kbf_n^\MW(\Lc) \arrow[r] \arrow[d] & \Kbf_n^\Mr \arrow[d] \\
\Ibf^n(\Lc) \arrow[r] & \overline{\Ibf}^n
\end{tikzcd}
\end{equation}
of sheaves on $X$ in this situation (and a similar fibre product square of sheaves on $\Sm_k$ in the untwisted case). We can now define Chow--Witt groups:

\begin{defi}
The \emph{$c$-th Chow--Witt group} of an essentially smooth $k$-scheme $X$ twisted by a line bundle $\Lc$ is $\widetilde{\CH}^c(X,\Lc)=\Hr^c(X,\Kbf_c^\MW(\Lc))$.
\end{defi}

Chow--Witt groups refine Chow groups in the following sense. The Rost--Schmid complex $\Cr_\RS(X,\Kbf_c^\Mr)$ exhibits $\Hr^c(X,\Kbf_c^\Mr)$ as the cokernel \[\bigoplus_{x\in X^{(c-1)}}\kappa(x)^\times\xrightarrow{\partial_\RS}\bigoplus_{x\in X^{(c)}}\Zb\rightarrow\Hr^c(X,\Kbf_c^\Mr)\rightarrow 0\] since for every field $F$, we have isomorphisms $\Kr_1^\Mr(F)\cong F^\times$ and $\Kr_0^\Mr(F)\cong\Zb$ and the group $\Kr_{-1}^\Mr(F)$ vanishes. It turns out that the relevant differential $\partial_\RS$ of the Rost--Schmid complex is the divisor map of \cite[Section 1.3]{fultonIntersectionTheory1998} hence $\Hr^c(X,\Kbf_c^\Mr)=\CH^c(X)$. Thus there is a comparison map $r:\widetilde{\CH}^c(X,\Lc)\rightarrow\CH^c(X)$ induced by the morphism $\Kbf_c^\MW(\Lc)\rightarrow\Kbf_c^\Mr$ of sheaves. Tensoring the above exact sequence with $\Zb/2$ also yields an exact sequence \[\bigoplus_{x\in X^{(c-1)}}\Kr_1^\Mr(\kappa(x))/2\xrightarrow{\partial_\RS}\bigoplus_{x\in X^{(c)}}\Kr_0^\Mr(\kappa(x))/2\rightarrow\CH^c(X)/2\rightarrow 0\] since $(\Kbf_n^\Mr/2)_{-m}\cong\Kbf_{n-m}^\Mr/2$ by Proposition \ref{prop:category_of_homotopy_modules}, so that $\CH^c(X)/2=\Hr^c(X,\Kbf_c^\Mr/2)$. Moreover, the Nisnevich cohomological dimension of $X$ is bounded above by its Krull dimension as noted in Remark \ref{rema:nisnevich_cohomological_dimension} and thus $\widetilde{\CH}^n(X,\Lc)$ vanishes if $n>\dim(X)$, like the usual Chow groups.

\begin{lem}\label{lem:vanishing_cohomology_k_milnor}
Let $m\geqslant n$ be integers and let $X\in\Sm_k$. Then the group $\Hr^{m+1}(X,2\Kbf_n^\Mr)$ vanishes.
\end{lem}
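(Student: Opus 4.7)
The plan is to exploit the short exact sequence of sheaves
\[0\rightarrow 2\Kbf_n^\Mr\rightarrow\Kbf_n^\Mr\rightarrow\Kbf_n^\Mr/2\rightarrow 0\]
on $\Sm_k$. Both the left and right terms are strictly $\Ab^1$-invariant (the quotient as a cokernel of multiplication by $2$, the image as a kernel of the quotient map) by Theorem \ref{theo:strictly_invariant_sheaves_abelian}, so the sequence lives in $\Abs_k^{\Ab^1}$ and yields a long exact sequence of cohomology groups on $X$.

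First I would establish the vanishing of $\Hr^p(X,\Kbf_n^\Mr)$ and $\Hr^p(X,\Kbf_n^\Mr/2)$ for every $p>n$. This is immediate from the Rost--Schmid resolution: its $p$-th component for $\Kbf_n^\Mr$ is $\bigoplus_{x\in X^{(p)}}\Kr_{n-p}^\Mr(\kappa(x),\Lambda_x)$, which vanishes for $p>n$ since Milnor $\Kr$-theory vanishes in negative degrees; the same reasoning applies to $\Kbf_n^\Mr/2$ via the isomorphism $(\Kbf_n^\Mr/2)_{-p}\cong\Kbf_{n-p}^\Mr/2$ of contractions.

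With this in hand, for $m>n$ the long exact sequence sandwiches $\Hr^{m+1}(X,2\Kbf_n^\Mr)$ between the vanishing terms $\Hr^m(X,\Kbf_n^\Mr/2)$ and $\Hr^{m+1}(X,\Kbf_n^\Mr)$, and we are done. The only remaining case is $m=n$, where only the right neighbour $\Hr^{n+1}(X,\Kbf_n^\Mr)$ vanishes and we obtain
\[\Hr^{n+1}(X,2\Kbf_n^\Mr)\cong\Coker\bigl(\Hr^n(X,\Kbf_n^\Mr)\rightarrow\Hr^n(X,\Kbf_n^\Mr/2)\bigr).\]
Under the identifications $\Hr^n(X,\Kbf_n^\Mr)=\CH^n(X)$ and $\Hr^n(X,\Kbf_n^\Mr/2)=\CH^n(X)/2$ recalled in the excerpt (from the Rost--Schmid presentation of the top of the complex and its mod $2$ reduction), this connecting map is the tautological reduction mod $2$, which is surjective; so the cokernel is zero.

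I do not anticipate any genuine obstacle: the argument is essentially a bookkeeping exercise with the Rost--Schmid complex. The only point that requires minor care is the identification of the map $\Hr^n(X,\Kbf_n^\Mr)\rightarrow\Hr^n(X,\Kbf_n^\Mr/2)$ induced by the short exact sequence with the mod $2$ reduction $\CH^n(X)\rightarrow\CH^n(X)/2$, but this follows from functoriality of the Rost--Schmid resolution with respect to morphisms of strictly $\Ab^1$-invariant sheaves, together with the explicit form of the surjection $\Kr_0^\Mr(\kappa(x))=\Zb\twoheadrightarrow\Zb/2$ at closed points of $X^{(n)}$.
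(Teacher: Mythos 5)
Your proof is correct, but it takes a genuinely different route from the paper's. The paper argues in one step directly on the sheaf $2\Kbf_n^\Mr$: since contraction is exact it commutes with taking the image of multiplication by $2$, so $(2\Kbf_n^\Mr)_{-(m+1)}\cong 2\Kbf_{n-m-1}^\Mr=0$ because $n-m-1<0$; hence the degree-$(m+1)$ component of the Rost--Schmid complex of $2\Kbf_n^\Mr$ is already zero and $\Hr^{m+1}(X,2\Kbf_n^\Mr)$ is a subquotient of the zero group. You instead run the long exact sequence of $0\rightarrow 2\Kbf_n^\Mr\rightarrow\Kbf_n^\Mr\rightarrow\Kbf_n^\Mr/2\rightarrow 0$, which forces a case split: for $m>n$ the group is sandwiched between two vanishing terms, while for $m=n$ you must additionally identify the connecting position with $\Coker(\CH^n(X)\rightarrow\CH^n(X)/2)$ and invoke surjectivity of mod-$2$ reduction (correctly flagging that this identification rests on functoriality of the Rost--Schmid resolution). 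Both arguments ultimately reduce to the vanishing of Milnor $\Kr$-theory in negative degrees; the paper's is shorter and uniform in $m$, whereas yours is more robust in the sense that it only uses the cohomology of the standard sheaves $\Kbf_n^\Mr$ and $\Kbf_n^\Mr/2$ and never needs to contract the subsheaf $2\Kbf_n^\Mr$ itself. No gap either way.
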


\begin{proof}
By Proposition \ref{prop:category_of_homotopy_modules}, the isomorphism $(\Kbf_n^\Mr)_{-p}\cong\Kbf_{n-p}^\Mr$ of sheaves induces an isomorphism $(2\Kbf_n^\Mr)_{-p}\cong 2\Kbf_{n-p}^\Mr$ for every $p\geqslant 0$. Thus $(2\Kbf_n^\Mr)_{-(m+1)}\cong 2\Kbf_{n-m-1}^\Mr$: since Milnor $\Kr$-theory vanishes in negative degrees and as $n-m-1<0$, it follows that $(2\Kr_n^\Mr)_{-(m+1)}(F)\cong 2\Kr_{n-m-1}^\Mr(F)=0$ for every finitely generated extension $F$ of $k$. Now the Rost--Schmid complex of $2\Kbf_n^\Mr$ exhibits $\Hr^{m+1}(X,2\Kbf_n^\Mr)$ as a subquotient of $\bigoplus_{x\in X^{(m+1)}}(2\Kr_n^\Mr)_{-(m+1)}(\kappa(x))$ which vanishes. \emph{A fortiori}, the group $\Hr^{m+1}(X,2\Kbf_n^\Mr)$ vanishes as required.
\end{proof}

The following lemma will allow us to substitute the cohomology of $\Ibf^n(\Lc)$ for the Chow--Witt groups in many questions of images for quadratic real cycle class maps.

\begin{lem}\label{lem:milnor_k_theory_to_i_cohomology_surjective}
Let $X\in\Sm_k$, let $\Lc$ be a line bundle on $X$ and let $m\geqslant n$ be integers. Then the map $\Kbf_n^\MW\rightarrow\Ibf^n(\Lc)$ of sheaves of Diagram \emph{(\ref{eq:mw_fibre_product})} induces an epimorphism $\Hr^m(X,\Kbf_n^\MW(\Lc))\rightarrow\Hr^m(X,\Ibf^n(\Lc))$ of abelian groups.
\end{lem}

This lemma applies in particular to the map $\Hr^n(X,\Kbf_n^\MW(\Lc))=\widetilde{\CH}^n(X,\Lc)\rightarrow\Hr^n(X,\Ibf^n(\Lc))$.

\begin{proof}
By the affirmation of the Milnor conjecture on quadratic forms in \cite{orlovExactSequenceKM22007}, the morphism $\Kbf_n^\Mr\rightarrow\overline{\Ibf}^n$ of sheaves given on symbols by $\{a_1,\ldots,a_n\}\mapsto\llangle a_1\rrangle\otimes\cdots\otimes\llangle a_n\rrangle$ factors into an isomorphism $\Kbf_n^\Mr/2\cong\overline{\Ibf}^n$ so that we have an exact sequence \[0\rightarrow 2\Kbf_n^\Mr\rightarrow\Kbf_n^\Mr\rightarrow\overline{\Ibf}^n\rightarrow 0\] of abelian sheaves on $\Sm_k$. By the fibre product square (\ref{eq:mw_fibre_product}) of sheaves on $X$ (and commutation of pullbacks with kernels which are particular cases of limits), we get an exact sequence \[0\rightarrow 2\Kbf_n^\Mr\rightarrow\Kbf_n^\MW(\Lc)\rightarrow\Ibf^n(\Lc)\rightarrow 0\] of sheaves on $X$. It induces an exact sequence \[\Hr^m(X,\Kbf_n^\MW(\Lc))\rightarrow\Hr^m(X,\Ibf^n(\Lc))\rightarrow\Hr^{m+1}(X,{2\Kbf}_n^\Mr)\] of abelian groups. The last group in this exact sequence vanishes by Lemma \ref{lem:vanishing_cohomology_k_milnor} hence the homomorphism $\Hr^m(X,\Kbf_n^\MW(\Lc))\rightarrow\Hr^m(X,\Ibf^n(\Lc))$ is surjective as required.
\end{proof}

\subsection{Comparing sheafification and unramified sheaves}

In this subsection, we compare sheafification of global constructions and unramified sheaves in two contexts: mod $2$ étale cohomology and Witt groups. The results of this subsection (which are well-known to experts) guarantee that we can use constructions and results that rely on either seamlessly.

\paragraph*{Unramified étale cohomology.} Let $F$ be a field of characteristic different from $2$. For every $n\geqslant 0$, we denote by $\Hr_\et^n(F,\Zb/2)$ the $n$-th étale cohomology group of $F$ with coefficients in $\Zb/2$. The graded abelian group $\Hr_\et^*(F,\Zb/2)$ underlies a graded $\Zb/2$-algebra whose multiplication is the cup-product $\cup$. Given $a\in F^\times$, we denote by $(a)\in\Hr_\et^1(F,\Zb/2)$ the associated symbol in Galois cohomology. The assignment $\{a_1,\ldots,a_n\}\mapsto(a_1)\cup\cdots\cup(a_n)$ extends to a well-defined homomorphism $\Kr_n^\Mr(F)/2\rightarrow\Hr_\et^n(F,\Zb/2)$ of groups and these assemble into a morphism $\Kr_*^\Mr(F)/2\rightarrow\Hr_\et^*(F,\Zb/2)$ of graded rings. By the affirmation of the Milnor conjecture in \cite{voevodskyMotivicCohomology2coefficients2003}, the morphism $\Kr_*^\Mr(F)/2\rightarrow\Hr_\et^*(F,\Zb/2)$ is an isomorphism of graded rings. Moreover, as was already used in the proof of Lemma \ref{lem:milnor_k_theory_to_i_cohomology_surjective}, the morphism $\Kr_*^\Mr(F)/2\rightarrow\overline{\Ir}^*(F)$ given on symbols by $\{a_1,\ldots,a_n\}\mapsto\llangle a_1\rrangle\otimes\cdots\otimes\llangle a_n\rrangle$ is an isomorphism of graded rings by \cite{orlovExactSequenceKM22007}. In particular, we have an isomorphism $\overline{\Ir}^*(F)\cong\Hr_\et^*(F,\Zb/2)$ modulo which for every unit $a$ in $F^\times$, the element $\llangle a\rrangle=\langle 1,-a\rangle$ of $\overline{\Ir}^1(F)$ corresponds to $(a)\in\Hr_\et^1(F,\Zb/2)$.

The groups $\Hr_\et^n(\text{--},\Zb/2)$ underly an unramified $\Fs_k$-datum in the sense of \cite[Definition 2.9]{morelA1AlgebraicTopologyField2012} and thus induce a sheaf $\Hbf^n$ on $\Sm_k$. These sheaves arrange into a homotopy module $\Hbf^*$ by the results of \cite[Section 2.3]{morelA1AlgebraicTopologyField2012}. By \emph{e.g.} \cite[Propositions 101.8, 101.9]{elmanAlgebraicGeometricTheory2008}, the isomorphisms $\overline{\Ir}^*(F)\cong\Kr_*^\Mr(F)/2\cong\Hr_\et^*(F,\Zb/2)$ provided by the affirmation of the Milnor conjecture are compatible with residue homomorphisms: consequently, they determine isomorphisms $\overline{\Ibf}^*\cong\Kbf_*^\Mr/2\cong\Hbf^*$ of homotopy modules. On the other hand, given a smooth $k$-variety $X$, one can consider the sheaf $\mathscr{H}^n$ on $X$ associated with the presheaf $U\mapsto\Hr_\et^n(U,\Zb/2)$. For every open subscheme $U$ of $X$, there is a localisation morphism $\Hr_\et^n(U,\Zb/2)\rightarrow\bigoplus_{x\in U^{(0)}}\Hr_\et^n(\kappa(x),\Zb/2)$ that factors through the subgroup $\Hbf^n(U)$. This construction determines a morphism $\mathscr{H}^n\rightarrow\Hbf^n$ of sheaves on $X$.

\begin{theo}\label{theo:unramified_étale_cohomology}
Let $X$ be a smooth $k$-variety. Then the map $\mathscr{H}^n\rightarrow\Hbf^n$ of sheaves on $X$ is an isomorphism.
\end{theo}

\begin{proof}
Let $Y$ be an essentially smooth $k$-scheme. In \cite{blochGerstensConjectureHomology1974}, the authors construct a cohomological coniveau spectral sequence with $\Er_1$-page of the form $\Er_1^{p,q}(Y)=\bigoplus_{x\in Y^{(p)}}\Hr_\et^{q-p}(\kappa(x),\Zb/2)$, converging to $\Hr_\et^{p+q}(Y,\Zb/2)$. By their main result \cite[(4.2) Theorem]{blochGerstensConjectureHomology1974} in the local case, extended to semi-local schemes in \cite[Proposition 2.1.2]{colliot-theleneBlochOgusGabberTheorem1997}, the line $q=n$ of the $\Er_1$-page is an acyclic complex if $Y$ is the semi-localisation of a smooth $k$-variety at a finite set. Therefore the sheaves $U\mapsto\Er_1^{p,n}(U)$ on $X$, together with the differentials $d_1^{p,n}$ of the $\Er_1$-page, induce a resolution of $\Hsc^n$ by acyclic sheaves. This computes the $\Er_2$-page as $\Er_2^{p,q}(X)=\Hr^p(X,\Hsc^q)$ and, taking $p=0$, shows that the sequence of abelian groups \[0\rightarrow\Hsc^n(U)\rightarrow\bigoplus_{x\in U^{(0)}}\Hr_\et^n(\kappa(x),\Zb/2)\xrightarrow{d_1^{0,n}}\bigoplus_{x\in U^{(1)}}\Hr_\et^{n-1}(\kappa(x),\Zb/2)\] is exact for every open subset $U$ of $X$. By \cite[Proposition 1.3]{colliot-theleneVarietesUnirationnellesNon1989}, the differential $d_1^{0,n}$ is given by the usual residue homomorphism $\partial$ in Galois cohomology defined in \emph{e.g.} \cite[Subsection 99.5]{elmanAlgebraicGeometricTheory2008}. But by the very definition of the homotopy module $\Hbf^*$, the group $\Hbf^n(U)$ consists of the unramified classes in $\bigoplus_{x\in U^{(0)}}\Hr_\et^n(\kappa(x),\Zb/2)$, namely it is the kernel of the morphism $\bigoplus_{x\in U^{(0)}}\Hr_\et^n(\kappa(x),\Zb/2)\xrightarrow{\partial}\bigoplus_{x\in U^{(1)}}\Hr_\et^{n-1}(\kappa(x),\Zb/2)$. Hence the map $\Hsc^n(U)\rightarrow\Hbf^n(U)$ is an isomorphism, as required.
\end{proof}

Thus we do not have to distinguish between $\mathscr{H}^n$ and $\Hbf^n$ on smooth $k$-varieties. In the sequel, we will use the notation $\mathscr{H}^n$ (as in \cite{colliot-theleneZerocyclesCohomologyReal1996}).

\paragraph*{Symmetric bilinear forms on schemes.} Let $X$ be a scheme and let $\Lc\in\underline{\Pic}(X)$. These data determine an exact category with duality structure on the category of locally free $\Osc_X$-modules of finite rank in the sense of \cite{balmerWittGroups2005}, with duality $\Esc^\vee=\underline{\Hom}_{\Osc_X}(\Esc,\Lc)$ (internal $\Hom$ in the category of $\Osc_X$-modules) and usual biduality. Its Witt group in the sense of Balmer, denoted by $\Wr(X,\Lc)$, controls non-degenerate symmetric bilinear forms with values in $\Lc$ (hereafter $\Lc$-forms) modulo metabolic forms. The tensor product of forms determines an operation $\Wr(X,\Lc)\times\Wr(X,\Lc')\rightarrow\Wr(X,\Lc\otimes\Lc')$ that turns $\Wr(X)=\Wr(X,\Osc_X)$ into a ring and $\Wr(X,\Lc)$ into a $\Wr(X)$-module. Inside the ring $\Wr(X)$ lies the ideal $\Ir(X)$ of even rank forms. We denote by $\Ir^n(X)$ the $n$-th power of this ideal and extend this notation to $n\leqslant 0$ by setting $\Ir^n(X)=\Wr(X)$ in this case. Taking $X=\Spec F$ where $F$ is a field recovers the basic set-up of the theory of symmetric bilinear forms over fields.

Let $U$ be an open subset of $X$. Every invertible section $a\in\Osc_X(U)^\times$ determines a rank $1$ form $\langle a\rangle:x\otimes y\mapsto axy$ on $\Osc_U$, and multiplication by these forms determines an action of $\Osc_X(U)^\times$ on $\Wr(U)$ on the right by group automorphisms that is functorial in $U$. In particular, this action preserves the rank so it induces an action on $\Ir^n(U)$ for every $n\in\Zb$. This construction induces a $\Zb[\Osc_X^\times]$-module structure on the sheaf $\Isc^n$ on $X$ associated with the presheaf $U\mapsto\Ir^n(U)$. We then denote by $\Isc^n(\Lc)$ the twist of $\Isc^n$ by $\Lc$.

The above constructions are functorial with respect to arbitrary morphisms of schemes. In particular, for every open subscheme $U$ of $X$, there is a localisation map $\Ir^n(U)\otimes_{\Zb[\Osc_X(U)^\times]}\Zb[\Lc^0(U)]\rightarrow\prod_{x\in U^{(0)}}\Ir^n(\kappa(x),\Lc(x))$. In fact, assuming that $X\in\Sm_k'$, this map factors through the subgroup $\Ibf_\Lc^n(U)$ of $\bigoplus_{x\in U^{(0)}}\Ir^n(\kappa(x),\Lc(x))$, where we write $\Ibf_\Lc^n$ for $\Ibf^n(\Lc)$ to avoid the cumbersome notation $\Ibf^n(\Lc)(U)$. The resulting homomorphism $\Ir^n(U)\otimes_{\Zb[\Osc_X(U)^\times]}\Zb[\Lc^0(U)]\rightarrow\Ibf_\Lc^n(U)$ is functorial with respect to open immersions, yielding a morphism $\Isc^n(\Lc)\rightarrow\Ibf^n(\Lc)$ of sheaves on $X$ by sheafification.

\begin{theo}
Let $X$ be an essentially smooth $k$-scheme and let $\Lc$ be a line bundle on $X$. Then the morphism $\Isc^n(\Lc)\rightarrow\Ibf^n(\Lc)$ of sheaves on $X$ is an isomorphism.
\end{theo}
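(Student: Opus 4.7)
The plan is to verify the isomorphism stalkwise on the small Zariski (equivalently, Nisnevich) site of $X$. Fix a point $x\in X$ with local ring $A=\Osc_{X,x}$; since $\Lc$ is locally trivial, after replacing $X$ by a small enough open neighbourhood of $x$ we may assume $\Lc\cong\Osc_X$. This reduces the assertion to showing that the canonical morphism $\Isc^n\rightarrow\Ibf^n$ of sheaves on $\Spec A$ is an isomorphism, where $\Isc^n$ is the sheaf associated with $U\mapsto\Ir^n(U)$ and $\Ibf^n$ is the restriction of Morel's strictly $\Ab^1$-invariant sheaf to $\Nis(X)$.

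Next, I would unwind both stalks at $x$. The stalk of $\Isc^n$ is the $n$-th power $\Ir^n(A)$ of the fundamental ideal of the Witt ring of the regular local $k$-algebra $A$. On the other hand, because $\Ibf^n$ is unramified, its stalk $\Ibf^n(A)$ fits by the (twisted) Rost--Schmid complex into an exact sequence
\[
0\rightarrow\Ibf^n(A)\rightarrow\Ir^n(\Frac A)\xrightarrow{\partial}\bigoplus_{\p\in(\Spec A)^{(1)}}\overline{\Ir}^{n-1}(\kappa(\p),\Lambda_\p),
\]
so $\Ibf^n(A)$ is concretely the subgroup of $\Ir^n(\Frac A)$ consisting of classes whose second residues at every height-one prime of $A$ vanish. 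The localisation morphism $\Ir^n(A)\rightarrow\Ir^n(\Frac A)$ lands in this subgroup by functoriality of residues, and gives the map whose bijectivity I must prove.

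At this point the statement becomes the Gersten conjecture for the powers of the fundamental ideal over the regular local ring $A$, which contains an infinite perfect field of characteristic $\neq 2$: namely that $\Ir^n(A)$ injects into $\Ir^n(\Frac A)$ and that its image is exactly the subgroup cut out by vanishing of the first residues. Both statements are known for regular local rings that are essentially smooth over a field of characteristic not $2$; the injectivity part (the so-called weak Gersten conjecture) and the full exactness of the Gersten--Witt complex in this context are due to Balmer--Walter and Gille, and were transported to the powers $\Ir^n$ and exploited systematically by Fasel in his study of Chow--Witt groups. Invoking these theorems stalkwise, we conclude that $\Isc^n(\Lc)\rightarrow\Ibf^n(\Lc)$ induces an isomorphism on every stalk, hence is an isomorphism of sheaves.

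The principal obstacle is therefore not formal: injectivity and local triviality of $\Lc$ are easy reductions, whereas the essential input is the Gersten-type exactness for $\Ir^n$ on regular local rings of geometric origin. Once this external result is cited, the rest is bookkeeping: compatibility of the $\Gm$-action used to form $\Ibf^n(\Lc)$ with the multiplication of forms by rank-one forms $\langle a\rangle$, plus functoriality of the localisation map, makes the twisted case an immediate consequence of the untwisted one after trivialising $\Lc$ on each local ring.
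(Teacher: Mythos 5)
Your proposal follows essentially the same route as the paper: reduce to stalks at points of $X$, identify the stalk of $\Isc^n(\Lc)$ with (the twist of) $\Ir^n(A)$ for the essentially smooth local ring $A$, identify the stalk of $\Ibf^n(\Lc)$ via the Rost--Schmid complex as the unramified subgroup of $\Ir^n(\Frac A)$, and invoke the affirmed Gersten--Witt conjecture for the powers of the fundamental ideal. Two small remarks: in your displayed sequence the residue should land in $\Ir^{n-1}(\kappa(\p),\Lambda_\p)$, not $\overline{\Ir}^{n-1}$ (your verbal description via second residues is the correct one); and your appeal to ``functoriality of residues'' silently identifies the Rost--Schmid differential defining $\Ibf^n$ with the classical Gersten--Witt differential for which the exactness theorems are proved --- the paper makes this identification explicit by citing Morel's comparison result, and it is worth stating since the two complexes have a priori different constructions.
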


\begin{proof}
It suffices to check that this morphism is an isomorphism on stalks. By definition of $\Ibf^n(\Lc)$, it then suffices to show that if $A$ is an essentially smooth local $k$-algebra with fraction field $K$ and $L$ is a line bundle on $\Spec A$, then denoting by $\partial$ the twisted second residue homomorphism for Witt groups, the sequence \[0\rightarrow\Ir^n(A)\otimes_{\Zb[A^\times]}\Zb[L^0]\rightarrow\Ir^n(K,L\otimes_A K)\xrightarrow{\partial}\bigoplus_{x\in(\Spec A)^{(1)}}\Ir^{n-1}(\kappa(x),L(x)\otimes\Lambda_x)\] of abelian groups is exact. This follows, by induction on $n$, from two ingredients (see also \cite[Corollary 7.8]{gilleGradedGerstenWitt2007} for an analogous argument in the context of coherent Witt groups). The first is purity for the Witt group \cite{ojangurenPurityTheoremWitt1999}, namely exactness of the above sequence for Witt groups, thus for $n\leqslant 0$. The second is the affirmation of the universal exactness of the Gersten complex for Milnor $\Kr$-theory in \cite{kerzGerstenConjectureMilnor2009} which implies the exactness of the above complex for $\Kr_*^\Mr/2$ and thus for $\overline{\Ir}^*$ thanks to the affirmation of the Milnor conjecture.
\end{proof}

Thus as in the previous subsection, no distinction between $\Isc^n(\Lc)$ and $\Ibf^n(\Lc)$ is required when dealing with sheaves on $X$. In the sequel, we will use the notation $\Ibf^n(\Lc)$ or $\Ibf_\Lc^n$.

\subsection{The quadratic real cycle class map}

\paragraph*{Jacobson's signature map.} We recall Jacobson's signature morphism of sheaves from \cite{jacobsonRealCohomologyPowers2017}. Let $X$ be a smooth algebraic variety over $\Rb$; denote by $\iota:X(\Rb)\hookrightarrow X$ be the inclusion of the locus $X(\Rb)$ of real points of $X$ into $X$. We endow $X(\Rb)$ with the Euclidean topology for which $X(\Rb)$ underlies a smooth manifold, and with the sheaf $\Csc_{X(\Rb)}$ of continuous functions with values in $\Rb$. Since polynomials are continuous for the Euclidean topology, the map $\iota$ is continuous and in fact underlies a morphism $\iota=(\iota,\iota^\sharp):(X(\Rb),\Csc_{X(\Rb)})\rightarrow(X,\Osc_X)$ of locally ringed spaces, where the morphism $\iota^\sharp:\Osc_X\rightarrow\iota_*\Csc_{X(\Rb)}$ of sheaves restricts the sections of $\Osc_X$ to the real locus. If $\Lc$ is a locally free $\Osc_X$-module of rank $1$, we denote the $\Csc_{X(\Rb)}$-module $\iota^*\Lc$ by $\Lc(\Rb)$ since, geometrically, pulling back along $\iota$ corresponds to taking the real locus.

By normalising the signature $\sign:\Wr(\Rb)\rightarrow\Zb$ of symmetric bilinear forms over $\Rb$, we can define a morphism $\sign_n:\Ibf^n\rightarrow\iota_*\Zb$ of sheaves for every $n\geqslant 1$. Briefly, the morphism $\sign_n$ locally takes $b\in\Ir^n(U)$ to the function $\sign_n(b):x\mapsto\frac{\sign(x^*b)}{2^n}$, where $x^*:\Wr(X)\rightarrow\Wr(\Rb)$ is the restriction to the fibre over the point $x\in U(\Rb)$. This function $\sign_n(b)$ is integer-valued because $b$ lies in the $n$-th power of the fundamental ideal so its signature at each point of $U(\Rb)$ lies in $2^n\Zb$, and is locally constant as noted in \cite[p. 192]{maheSignaturesComposantesConnexes1982} so $\sign_n(b)$ determines a section of $\iota_*\Zb$ on $U$. By construction, multiplication by $\llangle -1\rrangle$ then induces a commutative diagram
\begin{center}
\begin{tikzcd}[column sep = large]
\Wbf=\Ibf^0 \arrow[rrrd,swap,"\sign_0"] \arrow[r,"\otimes\llangle -1\rrangle"] & \Ibf=\Ibf^1 \arrow[rrd,"\sign_1"] \arrow[r,"\otimes\llangle -1\rrangle"] & \cdots \arrow[r,"\otimes\llangle -1\rrangle"] & \Ibf^n \arrow[d,"\sign_n"] \arrow[r,"\otimes\llangle -1\rrangle"] & \cdots \\
 & & & \iota_*\Zb & 
\end{tikzcd}
\end{center}
of sheaves. This diagram induces a morphism $\sign_\infty:\colim\Ibf^n=\Ibf^\infty\rightarrow\iota_*\Zb$ of sheaves on $X$. 

The sheaf $\Osc_X^\times$ of groups acts on $\Ibf^n$ for every $n$ by multiplication by rank $1$ forms, and by commutativity of the product in the Witt ring, multiplication by $\llangle -1\rrangle$ is $\Osc_X^\times$-equivariant for this action. This yields a $\Zb[\Osc_X^\times]$\hbox{-}module structure on the colimit $\Ibf^\infty$. On the other hand, as observed in Example \ref{exe:twist_by_topological_line_bundle}, the sheaf $\Csc_{X(\Rb)}^\times$ acts additively on any abelian sheaf on $X(\Rb)$ by multiplication by the sign of invertible functions. Restricting along the morphism $\iota^\sharp:\Osc_X^\times\rightarrow\iota_*\Csc_{X(\Rb)}^\times$ of sheaves of groups thus induces an additive action of $\Osc_X^\times$ on the pushforward sheaf $\iota_*\Asc$ for every abelian sheaf $\Asc$ on $X(\Rb)$. Now if $U\subseteq X$ is an open subset, then for every $a\in\Osc_X(U)^\times$, the signature of $\langle a\rangle$ and the sign of $\iota^\sharp(a)$ agree as maps from $U(\Rb)$ to $\{\pm 1\}$. It follows that the signature homomorphism $\sign_n:\Ibf^n\rightarrow\iota_*\Zb$ is $\Osc_X^\times$-equivariant for the actions of $\Osc_X^\times$ previously discussed. Hence it can be twisted by $\Lc$ into a map $\sign_n(\Lc):\Ibf^n(\Lc)\rightarrow(\iota_*\Zb)(\Lc)$. These maps for varying $n$ assemble into a morphism $\sign_\infty(\Lc):\colim(\Ibf^n(\Lc))=\Ibf^\infty(\Lc)\rightarrow(\iota_*\Zb)(\Lc)$ of sheaves. Observe that every abelian sheaf $\Asc$ on $X(\Rb)$ can be twisted by $L=\Lc(\Rb)$ into a sheaf $\Asc(L)$ as explained in Example \ref{exe:twist_by_topological_line_bundle}. By \cite[Lemma 2.24]{hornbostelRealCycleClass2021}, we then have an isomorphism $\iota_*(\Asc(L))\cong(\iota_*\Asc)(\Lc)$ of sheaves on $X$ for any $\Asc$ and thus $\iota_*(\Zb(L))\cong(\iota_*\Zb)(\Lc)$ when $\Asc=\Zb$.

\paragraph*{The quadratic real cycle class map for $\Ibf$-cohomology.} We can now define the twisted quadratic real cycle class map $\gamma_\Wr(X,\Lc)$ for $\Ibf$-cohomology. Let again $X$ be a smooth algebraic variety over $\Rb$ and consider the morphism $\iota:(X(\Rb),\Csc_{X(\Rb)})\rightarrow(X,\Osc_X)$ of locally ringed spaces described above. For every $n\geqslant 0$, there is a canonical morphism $i_n:\Ibf^n(\Lc)\rightarrow\Ibf^\infty(\Lc)$ such that $\sign_\infty(\Lc)\circ i_n=\sign_n(\Lc)$. This yields a morphism \[\Hr^c(X,\Ibf^n(\Lc))\xrightarrow{i_n}\Hr^n(X,\Ibf^\infty(\Lc))\xrightarrow{\sign_\infty(\Lc)}\Hr^c(X,\iota_*\Zb(\Lc))\] induced by functoriality in the sheaf of coefficients. We have the following result comparing cohomology on $X$ and cohomology on $X(\Rb)$ for locally constant sheaves on $X(\Rb)$:

\begin{prop}\label{prop:locally_constant_cohomology}
For every locally constant sheaf $\Asc$ on $X(\Rb)$, there is an isomorphism $\Hr^n(X,\iota_*\Asc)\cong\Hr^n(X(\Rb),\Asc)$ of cohomology groups that is functorial with respect to both $X$ and $\Asc$.
\end{prop}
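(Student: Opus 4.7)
The strategy is to apply the Leray spectral sequence for the continuous map $\iota:X(\Rb)\to X$ (with $X$ endowed with its Zariski topology). This produces a first-quadrant spectral sequence
\[E_2^{p,q} = \Hr^p(X, R^q\iota_* M) \Longrightarrow \Hr^{p+q}(X(\Rb), M)\]
that is functorial in both $M$ and $X$. Consequently, if we can establish that the higher direct images $R^q\iota_* M$ vanish for every $q > 0$, then the spectral sequence collapses onto its bottom row and yields the desired isomorphism, with naturality in $X$ and $M$ inherited from the functoriality of the Leray spectral sequence.

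To prove this vanishing, we pass to stalks. At a point $x\in X$, the usual description of the sheafification gives
\[(R^q\iota_* M)_x \;\cong\; \colim_{U\ni x}\Hr^q(U(\Rb), M|_{U(\Rb)}),\]
where $U$ runs over Zariski open neighbourhoods of $x$ in $X$. The task reduces to showing that this filtered colimit vanishes for every $x\in X$ and every $q > 0$, \emph{i.e.}, that any cohomology class on $U(\Rb)$ is killed upon further Zariski shrinking around $x$.

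The hard part is precisely this local vanishing. The key geometric inputs are the semi-algebraic structure of the real analytic manifold $X(\Rb)$ together with the local constancy of $M$. Given a class $\alpha\in\Hr^q(U(\Rb), M|_{U(\Rb)})$ with $q > 0$, one represents $\alpha$ as a \v{C}ech cocycle with respect to a finite cover of $U(\Rb)$ by simply connected semi-algebraic open subsets on which $M$ becomes constant, and then exploits the following principle: a closed semi-algebraic subset of $X(\Rb)$ that avoids $x$ is contained in the real locus $Z(\Rb)$ of some Zariski-closed subvariety $Z\subseteq X$ still avoiding $x$. Applying this to the (suitably defined) semi-algebraic support of the chosen \v{C}ech representative, the Zariski open $V := U\setminus Z$ contains $x$ and satisfies $\alpha|_{V(\Rb)}=0$, which proves the vanishing. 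Combined with the Leray spectral sequence, this yields the proposition; functoriality in $X$ follows from naturality of Leray under pullback along morphisms of $\Rb$-schemes, and functoriality in $M$ from that of $\iota_*$ and sheaf cohomology.
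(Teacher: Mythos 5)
Your reduction is the right one, and it is essentially the route behind the references the paper cites for this statement: the comparison map $\Hr^n(X,\iota_*M)\rightarrow\Hr^n(X(\Rb),M)$ is the edge morphism of the Leray spectral sequence for $\iota$, and everything comes down to the vanishing of $R^q\iota_*M$ for $q>0$, computed stalkwise as $\colim_{U\ni x}\Hr^q(U(\Rb),M)$. Functoriality in $X$ and $M$ indeed follows formally once that vanishing is known. In Jacobson's and Scheiderer's treatment the vanishing is obtained by factoring $\iota$ through the real spectrum: the support map $X_r\rightarrow X$ has vanishing higher direct images (a theorem of Scheiderer, proved by reduction to real spectra of fields, which are boolean spaces), and sheaf cohomology on $X_r$ is identified with semialgebraic, hence with topological, cohomology of $X(\Rb)$.

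The gap is in your proof of the local vanishing, which is the entire content of the proposition. The ``principle'' you invoke is false: a closed semialgebraic subset $S\subseteq X(\Rb)$ avoiding $x$ need not be contained in the real locus of a Zariski-closed subvariety avoiding $x$. Take $X=\Ab^1_\Rb$, $x=0$ and $S=[1,2]$: any Zariski-closed subset of $X$ containing $S$ contains the Zariski closure of $S$, which is all of $\Ab^1$. In general a Zariski-closed $Z$ with $S\subseteq Z(\Rb)$ must contain $\overline{S}^{\Zar}$, which has the same dimension as $S$, so the principle could only apply to semialgebraic sets of dimension $<\dim X$, and a \v{C}ech cocycle on a finite cover of $U(\Rb)$ has no reason to be ``supported'' on such a set. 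Moreover ``avoids $x$'' is not meaningful for the scheme points where the vanishing must also be checked (the generic point, closed points with residue field $\Cb$, non-closed points), and for a real closed point $x$ a Zariski shrinking $V=U\setminus Z$ never isolates a small Euclidean neighbourhood of $x$: the stalk colimit still sees classes living far from $x$, so one must kill \emph{all} of $\Hr^q(V(\Rb),M)$ in the limit, not just classes near $x$. As written, the hard step is asserted rather than proved; you should either quote Scheiderer's acyclicity theorem for the support map together with the semialgebraic--topological comparison, or give a genuinely different argument for the stalkwise vanishing.
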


\begin{proof}
Combine \cite[Corollaire 3.7]{costeTopologieSpectreReel1982} and \cite[(19.2) Theorem]{scheidererRealEtaleCohomology1994} (see also \cite[Remark 4.4]{jacobsonRealCohomologyPowers2017}).
\end{proof}

Setting $L=\Lc(\Rb)$, Proposition \ref{prop:locally_constant_cohomology} applies to the locally constant sheaf $\Asc=\Zb(L)$ for which we recall that $\iota_*\Zb(L)\cong(\iota_*\Zb)(\Lc)$. Composing the previous morphism with this isomorphism then yields a morphism \[\gamma_{\Wr,n}^c(X,\Lc):\Hr^c(X,\Ibf^n(\Lc))\rightarrow\Hr^c(X,\Ibf^\infty(\Lc))\xrightarrow{\sign_\infty(\Lc)}\Hr^c(X,\iota_*\Zb(\Lc))\cong\Hr^c(X,(\iota_*\Zb)(L))\cong\Hr^c(X(\Rb),\Zb(L))\] of abelian groups.

\begin{defi}
The map $\gamma_{\Wr,n}^c(X,\Lc):\Hr^c(X,\Ibf^n(\Lc))\rightarrow\Hr^c(X(\Rb),\Zb(L))$ is the quadratic real cycle class map for $\Ibf$-cohomology. When $n=c$, we simply denote it by $\gamma_\Wr^c(X,\Lc):\Hr^c(X,\Ibf^c(\Lc))\rightarrow\Hr^c(X(\Rb),\Zb(L))$. We suppress $\Lc$ from the notation if $\Lc\simeq\Osc_X$.
\end{defi}

In the sequel, we drop the letter $\Wr$ indicating the quadratic (or Witt-theoretic) nature of the preceding construction from the notation since we do not consider the complex and equivariant cycle class maps from the introduction and simply write $\gamma_n^c(X,\Lc)$ (resp. $\gamma^c(X,\Lc)$) for $\gamma_{\Wr,n}^c(X,\Lc)$ (resp. $\gamma_\Wr^c(X,\Lc)$).

\begin{rema}
Our attempt at formulating a conjecture on the image of the quadratic real cycle class maps will be centred on the case of the maps $\gamma^c(X,\Lc):\Hr^c(X,\Ibf^c(\Lc))\rightarrow\Hr^c(X(\Rb),\Zb(L))$ with $n=c$. This is because these diagonal maps induce the quadratic real cycle class map for Chow--Witt groups which can then be compared to the Borel--Haefliger cycle class map using the homomorphism $r:\widetilde{\CH}^c(X,\Lc)\rightarrow\CH^c(X)$.
\end{rema}

The inclusion $\Ibf^{n+1}(\Lc)\rightarrow\Ibf^n(\Lc)$ corresponds to multiplication by $2$ in the sense that the square
\begin{equation}\label{eq:inclusion_(n+1)th_power_into_n}
\begin{tikzcd}
\Ibf^{n+1}(\Lc) \arrow[r] \arrow[d,swap,"\protect{\sign_{n+1}(\Lc)}"] & \Ibf^n(\Lc) \arrow[d,"\protect{\sign_n(\Lc)}"] \\
\iota_*\Zb(\Lc) \arrow[r,swap,"2"] & \iota_*\Zb(\Lc)
\end{tikzcd}
\end{equation}
of sheaves on $X$ commutes. Thus there is an induced map $\overline{\sign}_n(\Lc):\overline{\Ibf}^n\rightarrow\iota_*\Zb(\Lc)/2\iota_*\Zb(\Lc)=\iota_*\Zb/2$ on cokernels (the sheaf $\Osc_X^\times$ acts trivially on $\iota_*\Zb/2$ hence we do not need to take the twist into account). This yields a map $\overline{\gamma}_n^c(X,\Lc):\Hr^c(X,\overline{\Ibf}^n)\rightarrow\Hr^c(X,\iota_*\Zb/2)\cong\Hr^c(X(\Rb),\Zb/2)$ by Proposition \ref{prop:locally_constant_cohomology}. We also set $\overline{\gamma}_c^c(X,\Lc)=\overline{\gamma}^c(X,\Lc)$. By functoriality of sheaf cohomology on $X$ and in view of the natural isomorphism of Proposition \ref{prop:locally_constant_cohomology}, the commutative square of Diagram (\ref{eq:inclusion_(n+1)th_power_into_n}) induces a commutative ladder of abelian groups with exact rows:
\begin{center}
\begin{tikzcd}
\cdots \arrow[r] & \Hr^c(X,\Ibf^{n+1}(\Lc)) \arrow[d,"\protect{\gamma_{n+1}^c(X,\Lc)}"] \arrow[r] & \Hr^c(X,\Ibf^n(\Lc)) \arrow[d,"\protect{\gamma_{n}^c(X,\Lc)}"] \arrow[r] & \Hr^c(X,\overline{\Ibf}^n) \arrow[d,"\protect{\overline{\gamma}_n^c(X,\Lc)}"] \arrow[r] & \cdots \\
\cdots \arrow[r] & \Hr^c(X(\Rb),\Zb(L)) \arrow[r,swap,"2"] & \Hr^c(X(\Rb),\Zb(L)) \arrow[r] & \Hr^c(X(\Rb),\Zb/2) \arrow[r] & \cdots 
\end{tikzcd}
\end{center}
More generally, for every $m\geqslant n$, we have a commutative square
\begin{equation}\label{eq:inclusion_mth_power_into_n}
\begin{tikzcd}
\Ibf^m(\Lc) \arrow[r] \arrow[d,swap,"\protect{\sign_m(\Lc)}"] & \Ibf^n(\Lc) \arrow[d,"\protect{\sign_n(\Lc)}"] \\
\iota_*\Zb(\Lc) \arrow[r,swap,"2^{m-n}"] & \iota_*\Zb(\Lc)
\end{tikzcd}
\end{equation}
and thus an induced homomorphism $\sign_{n/m}(\Lc):\Ibf^n(\Lc)/\Ibf^m(\Lc)\rightarrow\iota_*\Zb/2^{m-n}(\Lc)$ of abelian sheaves on $X$. We denote by $\gamma_{n/m}^c(X,\Lc)$ the morphism \[\gamma_{n/m}^c(X,\Lc):\H^c(X,\Ibf^n(\Lc)/\Ibf^m(\Lc))\rightarrow\Hr^c(X(\Rb),\Zb/2^{m-n}(L))\] of abelian groups induced by $\sign_{n/m}(\Lc)$ using the isomorphism of Proposition \ref{prop:locally_constant_cohomology} for the locally constant sheaf $\Asc=\Zb/2^{m-n}$ on $X(\Rb)$. Thus the map $\gamma_{n/n+1}^c(X,\Lc)$ coincides with the morphism $\overline{\gamma}_n^c(X,\Lc)$. As before, by functoriality of sheaf cohomology on $X$ and in view of Proposition \ref{prop:locally_constant_cohomology}, the above square induces a commutative ladder 
\begin{center}
\begin{tikzcd}
\cdots \arrow[r] & \Hr^c(X,\Ibf^{m}(\Lc)) \arrow[d,"\protect{\gamma_{m}^c(X,\Lc)}"] \arrow[r] & \Hr^c(X,\Ibf^n(\Lc)) \arrow[d,"\protect{\gamma_{n}^c(X,\Lc)}"] \arrow[r] & \Hr^c(X,\Ibf^n(\Lc)/\Ibf^m(\Lc)) \arrow[d,"\protect{\gamma_{n/m}^c(X,\Lc)}"] \arrow[r] & \cdots \\
\cdots \arrow[r] & \Hr^c(X(\Rb),\Zb(L)) \arrow[r,swap,"2^{m-n}"] & \Hr^c(X(\Rb),\Zb(L)) \arrow[r] & \Hr^c(X(\Rb),\Zb/2^{m-n}(L)) \arrow[r] & \cdots 
\end{tikzcd}
\end{center}
of abelian groups with exact rows.

\subparagraph*{Comparison with classical mod $2$ cycle class maps.} Modulo the isomorphism $\overline{\Ibf}^n\cong\Hsc^n$ provided by the affirmation of the Milnor conjecture, the homomorphism $\overline{\sign}_n(\Lc)$ induces a morphism $h_n(\Lc):\Hsc^n\rightarrow\iota_*\Zb/2$ of sheaves on $X$. By the proof of \cite[Proposition 3.21]{hornbostelRealCycleClass2021}, the morphism $h_n(\Lc)$ coincides with the signature mod $2$ map $h_n:\Hsc^n\rightarrow\iota_*\Zb/2$ introduced in \cite{colliot-theleneRealComponentsAlgebraic1990}. In particular, the homomorphism $\overline{\sign}_n(\Lc)$ does not depend on the twisting sheaf $\Lc$ so we omit it from the notation, and by \cite[Remark 2.3.5]{colliot-theleneZerocyclesCohomologyReal1996}, the composite \[\CH^c(X)\rightarrow\CH^c(X)/2\cong\Hr^c(X,\Hsc^c)\cong\Hr^c(X,\overline{\Ibf}^c)\xrightarrow{\overline{\gamma}^c(X)}\Hr^c(X(\Rb),\Zb/2)\] then coincides with $\gamma_\BH^c(X)$. In other words, the diagram
\begin{center}
\begin{tikzcd}
\CH^c(X) \arrow[r] \arrow[rrd,swap,"\gamma_\BH^c(X)"] & \CH^c(X)/2\cong\Hr^c(X,\Hsc^c) \arrow[r,"\cong"] & \Hr^c(X,\overline{\Ibf}^c) \arrow[d,"\overline{\gamma}^c(X)"] & \Hr^c(X,\Ibf^c(\Lc)) \arrow[l] \arrow[d,"\protect{\gamma^c(X,\Lc)}"] \\
                                                      &                                              & \Hr^c(X(\Rb),\Zb/2) & \Hr^c(X(\Rb),\Zb(L)) \arrow[l]
\end{tikzcd}
\end{center}
commutes.

\paragraph*{The quadratic real cycle class map for $\Kbf^\MW$-cohomology.} The morphisms $\gamma_*^*(X,\Lc)$ determine quadratic real cycle class maps for $\Kbf$-cohomology by pre-composition with the morphism $\Kbf_n^\MW(\Lc)\rightarrow\Ibf^n(\Lc)$. In other words, we have a homomorphism \[\widetilde{\gamma}_{\Rb,n}^c(X,\Lc):\Hr^c(X,\Kbf_n^\MW(\Lc))\rightarrow\Hr^c(X,\Ibf^n(\Lc))\xrightarrow{\gamma_n^c(X,\Lc)}\Hr^c(X(\Rb),\Zb(L))\] of abelian groups. In the sequel, we drop the index $\Rb$ as, again, we do not consider the complex and equivariant maps from the introduction. We are particularly interested in the case $n=c$ for its relevance for Chow--Witt groups: this is then a homomorphism $\widetilde{\gamma}^c(X,\Lc):\widetilde{\CH}^c(X,\Lc)\rightarrow\Hr^c(X(\Rb),\Zb(L))$ of abelian groups. The square
\begin{center}
\begin{tikzcd}
\widetilde{\CH}^c(X,\Lc) \arrow[r,"r"] \arrow[d,swap,"\protect{\widetilde{\gamma}^c(X,\Lc)}"] & \CH^c(X) \arrow[d,"\gamma_\BH^c(X)"] \\
\Hr^c(X(\Rb),\Zb(L)) \arrow[r] & \Hr^c(X(\Rb),\Zb/2)
\end{tikzcd}
\end{center}
whose bottom horizontal map is reduction mod $2$ of the coefficients commutes hence the morphism $\widetilde{\gamma}^c(X,\Lc)$ refines the Borel--Haefliger cycle class map $\gamma_\BH^c(X)$.

\begin{lem}\label{lem:chow_witt_i_cohomology_cycle_map_same_image}
Let $c\geqslant n\geqslant 0$ be integers. Then the homomorphisms $\widetilde{\gamma}_{n}^c(X,\Lc)$ and $\gamma_{n}^c(X,\Lc)$ have the same image in $\Hr^c(X(\Rb),\Zb(L))$.
\end{lem}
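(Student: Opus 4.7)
The statement is essentially immediate from a prior result, so the plan is very short. By construction, $\widetilde{\gamma}_n^c(X,\Lc)$ factors as
\[
\Hr^c(X,\Kbf_n^\MW(\Lc)) \xrightarrow{\pi} \Hr^c(X,\Ibf^n(\Lc)) \xrightarrow{\gamma_n^c(X,\Lc)} \Hr^c(X(\Rb),\Zb(L)),
\]
where $\pi$ is the map induced in cohomology by the morphism of sheaves $\Kbf_n^\MW(\Lc) \to \Ibf^n(\Lc)$ coming from the fibre product square defining $\Kbf_n^\MW(\Lc)$. Consequently, the image of $\widetilde{\gamma}_n^c(X,\Lc)$ equals $\gamma_n^c(X,\Lc)(\Im \pi)$, and the equality of images of $\widetilde{\gamma}_n^c(X,\Lc)$ and $\gamma_n^c(X,\Lc)$ will follow as soon as $\pi$ is surjective.

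But this surjectivity, under the hypothesis $c \geqslant n$, is precisely the content of Lemma \ref{lem:milnor_k_theory_to_i_cohomology_surjective}. Invoking that lemma therefore completes the argument. There is no real obstacle; the lemma is recorded separately only because the equality of images will be applied repeatedly to transfer questions about $\widetilde{\gamma}^c(X,\Lc)$ to the analogous questions about $\gamma^c(X,\Lc)$, where the cohomology of $\Ibf^c(\Lc)$ interacts more directly via Jacobson's signature with $\Hr^c(X(\Rb),\Zb(L))$.
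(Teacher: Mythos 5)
Your argument is correct and is exactly the paper's proof: factor $\widetilde{\gamma}_n^c(X,\Lc)$ through the map induced in cohomology by $\Kbf_n^\MW(\Lc)\to\Ibf^n(\Lc)$ and apply Lemma \ref{lem:milnor_k_theory_to_i_cohomology_surjective} (with $m=c\geqslant n$) to see that this map is surjective, whence the images agree. Nothing to add.
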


This applies in particular to the quadratic real cycle class maps $\widetilde{\gamma}^c(X,\Lc)$ and $\gamma^c(X,\Lc)$.

\begin{proof}
By definition, we have a commutative triangle
\begin{center}
\begin{tikzcd}
\Hr^c(X,\Kbf_n^\MW(\Lc)) \arrow[rd,swap,"\protect{\widetilde{\gamma}_n^c(X,\Lc)}"] \arrow[r,"u"] & \Hr^c(X,\Ibf^n(\Lc)) \arrow[d,"\protect{\gamma_n^c(X,\Lc)}"] \\
& \Hr^c(X(\Rb),\Zb(L))
\end{tikzcd}
\end{center}
where $u:\Hr^c(X,\Kbf_n^\MW(\Lc))\rightarrow\Hr^c(X,\Ibf^n(\Lc))$ is induced by the structure morphism $\Kbf_n^\MW(\Lc)\rightarrow\Ibf^n(\Lc)$ of sheaves. Since $u$ is an epimorphism by Lemma \ref{lem:milnor_k_theory_to_i_cohomology_surjective}, the result follows.
\end{proof}

\paragraph*{Relevant results on the quadratic real cycle class map.} In this paragraph, we let $X$ be a smooth $\Rb$-variety of dimension $d$ and $\Lc$ be a line bundle on $X$, and we set $L=\Lc(\Rb)$.

\begin{theo}[Jacobson]\label{theo:twisted_signature_iso of sheaves}
The twisted signature map $\sign_\infty(\Lc):\Ibf^\infty(\Lc)\rightarrow\iota_*\Zb(\Lc)$ is an isomorphism.
\end{theo}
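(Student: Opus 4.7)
The plan is to reduce the assertion to the untwisted statement $\Lc\cong\Osc_X$, which is the content of Jacobson's main theorem in \cite{jacobsonRealCohomologyPowers2017}. Being an isomorphism of sheaves on $X$ is a Zariski-local property, so it suffices to cover $X$ by open subschemes $U$ over which $\Lc$ admits a trivialization and to check the claim after restriction to each such $U$.

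Fix such an open $U$ together with a trivialization $\tau:\Lc|_U\xrightarrow{\cong}\Osc_U$. Then $\tau$ identifies the sheaf $\Lc^0|_U$ of invertible sections with $\Gm|_U$ as sheaves with $\Gm$-action on the small Nisnevich site of $U$. In view of the definition of the twist as the sheafification of $\Asc(V)\otimes_{\Zb[\Gm(V)]}\Zb[\Lc^0(V)]$, this yields canonical isomorphisms $\Ibf^n(\Lc)|_U\cong\Ibf^n|_U$ for every $n$, as well as an analogous isomorphism $(\iota_*\Zb)(\Lc)|_U\cong(\iota_*\Zb)|_U$. Since filtered colimits commute with the tensor product involved in the twisting, we also obtain $(\colim\Ibf^n(\Lc))|_U\cong(\colim\Ibf^n)|_U$. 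By functoriality of the twisting construction with respect to the $\Gm$-equivariant morphisms $\sign_n$, these identifications are compatible with the signature maps; that is, $\sign_\infty(\Lc)|_U$ corresponds to $\sign_\infty|_U$ under the identifications just constructed.

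By Jacobson's theorem, the untwisted signature morphism $\sign_\infty:\colim\Ibf^n\rightarrow\iota_*\Zb$ is an isomorphism of sheaves on $X$; its restriction to $U$ is therefore an isomorphism, and combining with the previous identifications shows that $\sign_\infty(\Lc)|_U$ is an isomorphism. Since this holds on an open cover of $X$, the twisted signature map $\sign_\infty(\Lc)$ is an isomorphism of sheaves on $X$, as required. The hard part of the result is really Jacobson's untwisted theorem, which we are taking as a black box; the only work in the present argument is the compatibility check, which is routine because every construction in sight (the twist functor, the formation of $\Ibf^n$, the pushforward $\iota_*$, and the signature) is functorial in the line-bundle data and commutes with $\Gm$-equivariant operations.
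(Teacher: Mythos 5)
Your proposal is correct and is essentially the paper's argument: the paper reduces to the untwisted case in one line by noting that the twist functor $\Asc\mapsto\Asc(\Lc)$ carries isomorphisms to isomorphisms, then quotes Jacobson's theorem. Your local-trivialization check is just an explicit verification of that functoriality statement, so the two proofs coincide in substance.
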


\begin{proof}
The twist of an isomorphism of sheaves being an isomorphism, it suffices to consider the untwisted case which is \cite[Theorem 8.6 (i)]{jacobsonRealCohomologyPowers2017}, see \cite[Theorem 3.10 (1)]{hornbostelRealCycleClass2021}.
\end{proof}

\begin{theo}\label{theo:stable_range_isomorphism}
Let $n$ be an integer such that $n\geqslant d+1$. Then multiplication by $\llangle -1\rrangle$ determines an isomorphism $\otimes\llangle-1\rrangle:\Ibf^{n}(\Lc)\rightarrow\Ibf^{n+1}(\Lc)$ of sheaves on $X$.
\end{theo}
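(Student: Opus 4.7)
The plan is to exhibit multiplication by $\llangle -1\rrangle$ as an isomorphism of the flasque Rost--Schmid resolutions of $\Ibf^{n+1}(\Lc)$ and $\Ibf^{n+2}(\Lc)$ on $X$, from which the isomorphism of the resolved sheaves follows by passing to the kernel of the first differential. The codim-$p$ component of the Rost--Schmid complex of $\Ibf^m(\Lc)$ on $X$ is $\bigoplus_{x\in X^{(p)}}\Ir^{m-p}(\kappa(x),\Lambda_x\otimes L(x))$; since multiplication by $\llangle -1\rrangle$ is $\Gm$-equivariant and natural in the underlying data, it induces a map of Rost--Schmid complexes which on each codim-$p$ component is the coordinate-wise multiplication $\llangle -1\rrangle\colon\Ir^{n+1-p}(\kappa(x),-)\to\Ir^{n+2-p}(\kappa(x),-)$. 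Since $\dim(X)=d$, the components vanish for $p>d$, so it suffices to verify that each componentwise map in codimensions $0\leqslant p\leqslant d$ is an isomorphism.

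The residue field $\kappa(x)$ of a point $x\in X^{(p)}$ is finitely generated over $\Rb$ of transcendence degree $d-p$, hence has virtual cohomological $2$-dimension at most $d-p$. The field-level input I need is the following statement: for any field $F$ of characteristic different from $2$ with $\operatorname{vcd}_2(F)\leqslant t$ and any $F$-line $L$, multiplication by $\llangle -1\rrangle\colon\Ir^m(F,L)\to\Ir^{m+1}(F,L)$ is an isomorphism once $m\geqslant t+1$. Granting this with $F=\kappa(x)$, $t=d-p$, and $m=n+1-p$, the required inequality $n+1-p\geqslant(d-p)+1$ reduces exactly to $n\geqslant d$, matching the hypothesis.

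The main obstacle is therefore the field-level statement. My strategy is to use the Milnor conjecture isomorphism $\overline{\Ir}^m(F)\cong\Hr^m_\et(F,\Zb/2)$, under which the map on the quotient $\Ir^m/\Ir^{m+1}$ induced by $\llangle -1\rrangle$ corresponds to cup product with the Galois symbol $(-1)\in\Hr^1_\et(F,\Zb/2)$. For $F$ of virtual $2$-cohomological dimension at most $t$, cupping with $(-1)$ is known to be an isomorphism on $\Hr^m_\et(F,\Zb/2)$ once $m\geqslant t+1$. A descending induction on $m$, applied to the morphism of short exact sequences $0\to\Ir^{m+1}(F,L)\to\Ir^m(F,L)\to\overline{\Ir}^m(F)\to 0$ with vertical arrows given by multiplication by $\llangle -1\rrangle$, then propagates the isomorphism from a large-$m$ base case down to every $m\geqslant t+1$ via the five lemma. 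The base case is provided by Jacobson's stabilisation isomorphism $\colim_m\Ir^m(F,L)\cong C(X_F,\Zb(L))$ (with $X_F$ the real spectrum of $F$), which for $F$ of finite virtual $2$-cohomological dimension is already realised at some finite stage.
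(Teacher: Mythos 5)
Your global strategy is sound and is close in spirit to what the paper does: the paper disposes of this theorem by citing Jacobson's Corollary 8.11 directly, but it proves the corresponding field-level statement as Proposition \ref{prop:stable_range_isomorphism_witt_groups_of_fields} and elsewhere uses exactly your reduction of sheaf-level assertions to residue fields via the (twisted) Rost--Schmid complex. The reduction itself is fine: the map of complexes is well defined because $\otimes\llangle -1\rrangle$ is a morphism of homotopy modules (Remark \ref{rema:morphism_of_homotopy_modules}), the codimension-$p$ component is multiplication by $\llangle -1\rrangle$ on $\Ir^{n+1-p}(\kappa(x),\Lambda_x\otimes\Lc(x))\cong\Ir^{n+1-p}(\kappa(x))$, and the numerology $n+1-p\geqslant (d-p)+1\Leftrightarrow n\geqslant d$ is correct. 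The cohomological input (cup product with $(-1)$ is bijective on $\Hr_\et^m(F,\Zb/2)$ for $m\geqslant\operatorname{vcd}_2(F)+1$, via the restriction--corestriction sequence for $F(\sqrt{-1})/F$) and the five-lemma descent step are also correct.

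The gap is the base case of your descending induction. The isomorphism $\colim_m\Ir^m(F,L)\cong C(X_F,\Zb(L))$ says nothing about the transition maps being eventually bijective: a filtered colimit can equal a given group without any transition map being an isomorphism. The assertion that the colimit is ``already realised at some finite stage'' for $F$ of finite virtual $2$-cohomological dimension is precisely the statement you are trying to prove (at the level of fields it is the Arason--Elman--Jacob stability theorem; at the level of sheaves it is Jacobson's Corollary 8.11), so as written the argument is circular. Concretely, what is missing is (i) $2$-torsion-freeness of $\Ir^m(F)$ and (ii) $2\Ir^m(F)=\Ir^{m+1}(F)$ for large $m$, and neither follows from the colimit computation; your iterative ``correct modulo $\Ir^{m+2}$, then $\Ir^{m+3}$, \dots'' style of argument only ever produces membership in $\llangle -1\rrangle\Ir^m(F)+\Ir^N(F)$ for all $N$, which is weaker. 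The standard way to supply the missing input --- and the paper's own route in Proposition \ref{prop:stable_range_isomorphism_witt_groups_of_fields} --- is to note that $\Ir^{t+1}(F(\sqrt{-1}))=0$ because $F(\sqrt{-1})$ is finitely generated of transcendence degree $t$ over $\Cb$, hence a $C_t$-field, and then to invoke \cite[Corollary 35.27]{elmanAlgebraicGeometricTheory2008}, which gives $2\Ir^m(F)=\Ir^{m+1}(F)$ together with torsion-freeness of $\Ir^{m+1}(F)$ for $m\geqslant t$. That yields the field-level isomorphism for $m\geqslant t+1$ outright; once you have it, your descending induction and the Milnor-conjecture step become unnecessary.
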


This is essentially the proof of \cite[Corollary 8.11]{jacobsonRealCohomologyPowers2017} (that covers the untwisted case, from which the twisted case immediately follows). We will need a more refined statement so we quote the relevant result from the theory of Witt rings of extensions of $\Rb$.

\begin{prop}\label{prop:stable_range_isomorphism_witt_groups_of_fields}
Let $F/\Rb$ be a finitely generated extension of transcendence degree $d$. Then multiplication by $\llangle -1\rrangle$ induces an isomorphism $\Ir^n(F)\rightarrow\Ir^{n+1}(F)$ for every $n\geqslant d+1$ and an epimorphism $\Ir^d(F)\rightarrow\Ir^{d+1}(F)$.
\end{prop}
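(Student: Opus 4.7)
The plan is to split the assertion according to the value of $n$: handling $n\geqslant d+1$ directly via Theorem \ref{theo:stable_range_isomorphism}, and reducing the case $n=d$ via the snake lemma to a surjectivity statement in Galois cohomology that one proves using Serre's bound on $\cd_2(F(\sqrt{-1}))$.

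First, I would realise $F$ as the function field of a smooth connected $\Rb$-scheme $X$ of dimension $d$ (possible by Example \ref{exe:localisation_essentially_smooth} since $\Rb$ is perfect and $F/\Rb$ is finitely generated of transcendence degree $d$). Evaluating the sheaf-level isomorphism $\otimes\llangle -1\rrangle:\Ibf^{n+1}\rightarrow\Ibf^{n+2}$ of Theorem \ref{theo:stable_range_isomorphism} at the generic point of $X$, where $\Ibf^{n+1}$ takes the value $\Ir^{n+1}(F)$, immediately yields the desired isomorphism $\Ir^m(F)\rightarrow\Ir^{m+1}(F)$ for every $m\geqslant d+1$.

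For the surjectivity at $n=d$, I would apply the snake lemma to the commutative diagram of short exact sequences
\begin{center}
\begin{tikzcd}
0 \arrow[r] & \Ir^{d+1}(F) \arrow[r] \arrow[d,"\cdot\llangle -1\rrangle"] & \Ir^{d}(F) \arrow[r] \arrow[d,"\cdot\llangle -1\rrangle"] & \overline{\Ir}^{d}(F) \arrow[r] \arrow[d,"\cdot\llangle -1\rrangle"] & 0 \\
0 \arrow[r] & \Ir^{d+2}(F) \arrow[r] & \Ir^{d+1}(F) \arrow[r] & \overline{\Ir}^{d+1}(F) \arrow[r] & 0
\end{tikzcd}
\end{center}
whose left vertical arrow is an isomorphism by the first step. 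The snake lemma then equates surjectivity of the middle vertical arrow with surjectivity of the right vertical arrow, which under Milnor's conjecture \cite{orlovExactSequenceKM22007} coincides with cup product by $(-1)\in\Hr_\et^1(F,\Zb/2)$ from $\Hr_\et^d(F,\Zb/2)$ to $\Hr_\et^{d+1}(F,\Zb/2)$.

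To establish this cohomological surjection, I would use that $F(\sqrt{-1})/\Cb$ is finitely generated of transcendence degree $d$ over an algebraically closed field of characteristic zero, so that Serre's theorem gives $\cd_2(F(\sqrt{-1}))\leqslant d$, hence $\Hr_\et^q(F(\sqrt{-1}),\Zb/2)=0$ for $q>d$. In the Hochschild--Serre spectral sequence for the quadratic extension $F(\sqrt{-1})/F$ of Galois group $\Gamma=\Zb/2$, the $E_2$-page $\Hr^p(\Gamma,\Hr_\et^q(F(\sqrt{-1}),\Zb/2))$ is thus supported on $q\leqslant d$, so all contributions to $\Hr_\et^{d+1}(F,\Zb/2)$ lie in columns $p\geqslant 1$. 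Cup product with $(-1)$ acts on this spectral sequence as cup product with the generator $\tau$ of $\Hr^1(\Gamma,\Zb/2)$, which by Tate periodicity for the cyclic group $\Gamma$ is an isomorphism $\Hr^p(\Gamma,M)\rightarrow\Hr^{p+1}(\Gamma,M)$ for every $\Gamma$-module $M$ and every $p\geqslant 1$, and is also surjective for $p=0$. A downward induction on the Hochschild--Serre filtration then lifts the graded-piece surjectivity to a surjection on the abutment, yielding the required statement. The main obstacle is the bookkeeping of this final step: one must carefully check that cup product by $\tau$ commutes with the higher differentials and shifts the filtration as expected, so that surjectivity on each $E_\infty^{p,q}$ induces surjectivity on $\Hr_\et^{d+1}(F,\Zb/2)$.
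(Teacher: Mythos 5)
Your argument is essentially correct but follows a genuinely different route from the paper. The paper's proof stays entirely within quadratic form theory: it splits on whether $-1$ is a square in $F$, uses the Tsen--Lang theorem to see that $K=F[\sqrt{-1}]$ is a $C_d$-field so that $\Ir^{d+1}(K)=0$, and then invokes \cite[Corollary 35.27]{elmanAlgebraicGeometricTheory2008}, which directly yields $2\Ir^n(F)=\Ir^{n+1}(F)$ with $\Ir^{n+1}(F)$ torsion-free for all $n\geqslant d$; since $\llangle -1\rrangle=2$ in $\Wr(F)$, both the epimorphism and the isomorphism statements drop out at once. Your route instead passes through the Milnor conjecture and Galois cohomology: the snake-lemma reduction of the $n=d$ case to surjectivity of $(-1)\cup:\Hr_\et^d(F,\Zb/2)\rightarrow\Hr_\et^{d+1}(F,\Zb/2)$ is sound, and that surjectivity does follow from $\cd_2(F(\sqrt{-1}))\leqslant d$. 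What the paper's approach buys is that one reference absorbs all the work and treats all $n\geqslant d$ uniformly; what yours buys is independence from the Elman--Karpenko--Merkurjev machinery, at the cost of the Milnor conjecture plus a spectral-sequence induction that you rightly flag as delicate (a cleaner substitute for that induction is Arason's exact triangle $\Hr^n(F(\sqrt{-1}),\Zb/2)\xrightarrow{\mathrm{cores}}\Hr^n(F,\Zb/2)\xrightarrow{(-1)\cup}\Hr^{n+1}(F,\Zb/2)\xrightarrow{\mathrm{res}}\Hr^{n+1}(F(\sqrt{-1}),\Zb/2)$, which gives the surjectivity immediately from the vanishing of the last term).

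Two caveats. First, your cohomological step silently assumes $\sqrt{-1}\notin F$; when $-1$ is already a square there is no quadratic extension to run Hochschild--Serre over, but then $\cd_2(F)\leqslant d$ directly, so $\Hr_\et^{d+1}(F,\Zb/2)=0$ and the surjectivity is vacuous --- you should say this. Second, deriving the isomorphism part for $n\geqslant d+1$ by evaluating Theorem \ref{theo:stable_range_isomorphism} at the generic point is formally admissible (the theorem is quoted from Jacobson as a black box), but it is logically backwards: Jacobson's Corollary 8.11 is itself deduced from the field-level statement you are proving, which is why the paper supplies an independent field-theoretic proof rather than specialising the sheaf statement.
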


\begin{proof}
If $-1$ is a square in $F$, then $F$ can be viewed as a finitely generated extension of $\Cb$ of transcendence degree $d$. By \cite[Theorem 97.7]{elmanAlgebraicGeometricTheory2008}, the field $F$ is then a $C_d$-field in the sense of \cite[Section 97]{elmanAlgebraicGeometricTheory2008}: this means that every homogeneous polynomial of degree $n$ in $m>n^d$ variables vanishes at a point in $F^{m}$ distinct from $(0,\ldots,0)$. In particular, every quadratic form in $>2^d$ variables with coefficients in $F$ is isotropic, namely has a nontrivial zero. This applies to the $(d+1)$-fold Pfister forms $\llangle a_1\rrangle\otimes\cdots\otimes\llangle a_{d+1}\rrangle$ where each $a_i$ is a unit in $F$, which have dimension $2^{d+1}>2^d$. Now by \cite[Corollary 9.11]{elmanAlgebraicGeometricTheory2008}, isotropic Pfister forms are hyperbolic, namely have zero Witt class in $\Wr(F)$. By \cite[Corollary 4.9]{elmanAlgebraicGeometricTheory2008}, the subgroup $\Ir^{d+1}(F)$ of $\Wr(F)$ is generated by $(d+1)$-fold Pfister forms: we conclude that the group $\Ir^{d+1}(F)$ vanishes. Therefore the map $\otimes\llangle-1\rrangle:\Ir^d(F)\rightarrow\Ir^{d+1}(F)=0$ is surjective. Since $\Ir^n(F)\subseteq\Ir^{d+1}(F)$ for every $n\geqslant d+1$, the group $\Ir^n(F)$ also vanishes for every such $n$, hence multiplication by $\llangle -1\rrangle$ is an isomorphism from $0=\Ir^n(F)$ to $0=\Ir^{n+1}(F)$.

Now assume that $-1$ is not a square in $F$ and consider the field $K=F[\sqrt{-1}]$. Then the extension $K/\Cb$ is finitely generated of transcendence degree $d$ so again by \cite[Theorem 97.7]{elmanAlgebraicGeometricTheory2008}, the field $K$ is a $C_d$-field. Consequently, by the argument of the previous paragraph, the group $\Ir^{d+1}(K)$ vanishes and thus so does its subgroup $\Ir^{n+1}(K)$ for every $n\geqslant d$. We then deduce from \cite[Corollary 35.27]{elmanAlgebraicGeometricTheory2008} that for every $n\geqslant d$, the equality $2\Ir^n(F)=\Ir^{n+1}(F)$ holds in $\Wr(F)$ and the group $\Ir^{n+1}(F)$ is torsion free, so that the map $\cdot 2:\Ir^{n+1}(F)\rightarrow\Ir^{n+2}(F)$ is injective. Since $2=\langle 1,1\rangle=\llangle -1\rrangle$ in the Witt ring, we conclude that $\otimes\llangle -1\rrangle:\Ir^d(F)\rightarrow\Ir^{d+1}(F)$ is surjective and that $\otimes\llangle -1\rrangle:\Ir^{n+1}(F)\rightarrow\Ir^{n+2}(F)$ is an isomorphism of abelian groups for every $n\geqslant d$, as required.
\end{proof}

\begin{cor}[\protect{\cite[Corollary 8.11]{jacobsonRealCohomologyPowers2017}}]\label{cor:signature_iso_of_sheaves_finite_range}
Let $n\geqslant d+1$. Then the twisted signature map $\sign_{n}(\Lc):\Ibf^{n}(\Lc)\rightarrow\iota_*\Zb(\Lc)$ is an isomorphism of sheaves on $X$.
\end{cor}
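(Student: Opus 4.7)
The strategy is to deduce the finite-range statement from Jacobson's result for the infinite colimit (Theorem \ref{theo:twisted_signature_iso of sheaves}) by showing that the filtered colimit defining $\sign_\infty(\Lc)$ stabilises at index $d+1$. Concretely, the plan is to factor $\sign_{n+1}(\Lc)$ as $\sign_\infty(\Lc)\circ i_{n+1}$ (as is explicitly guaranteed by the construction of $\sign_\infty(\Lc)$ recalled just before the statement) and show that both factors are isomorphisms of sheaves on $X$.

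The factor $\sign_\infty(\Lc)$ is an isomorphism by Theorem \ref{theo:twisted_signature_iso of sheaves}, so the task reduces to showing that the canonical map $i_{n+1}:\Ibf^{n+1}(\Lc)\rightarrow\colim_m\Ibf^m(\Lc)$ is an isomorphism for $n\geqslant d$. Here I would appeal to Theorem \ref{theo:stable_range_isomorphism}: for every $m\geqslant d$, multiplication by $\llangle -1\rrangle$ yields an isomorphism $\Ibf^{m+1}(\Lc)\xrightarrow{\sim}\Ibf^{m+2}(\Lc)$. Hence for $n\geqslant d$, every transition map in the subsystem $(\Ibf^m(\Lc))_{m\geqslant n+1}$ is an isomorphism.

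It then remains to note that the subsystem $(\Ibf^m(\Lc))_{m\geqslant n+1}$ is cofinal in $(\Ibf^m(\Lc))_{m\geqslant 0}$, so that the two colimits coincide; and the colimit of a directed system of isomorphisms is computed by the constant term, so the natural map $\Ibf^{n+1}(\Lc)\rightarrow\colim_{m\geqslant n+1}\Ibf^m(\Lc)$ is itself an isomorphism. Composing these identifications gives that $i_{n+1}$ is an isomorphism, and composing with the already-established isomorphism $\sign_\infty(\Lc)$ concludes the proof.

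There is essentially no serious obstacle, since the two non-trivial inputs---Jacobson's theorem and the stable range isomorphism---are already in hand. The only thing to be careful about is the bookkeeping on the range of indices: Theorem \ref{theo:stable_range_isomorphism} guarantees the transitions are iso only \emph{starting} from index $d+1$, which is exactly what is needed to handle $\sign_{n+1}(\Lc)$ for $n\geqslant d$, and the hypothesis is sharp in the sense that one cannot lower $n$ by one using only this input (the refined Proposition \ref{prop:stable_range_isomorphism_witt_groups_of_fields} would be needed to say anything about $\sign_d(\Lc)$).
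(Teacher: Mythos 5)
Your proposal is correct and is essentially identical to the paper's own proof: both factor $\sign_{n+1}(\Lc)$ as $\sign_\infty(\Lc)\circ i_{n+1}$, invoke Theorem \ref{theo:twisted_signature_iso of sheaves} for the first factor, and use Theorem \ref{theo:stable_range_isomorphism} to see that the transition maps of the cofinal subsystem $(\Ibf^m(\Lc))_{m\geqslant n+1}$ are isomorphisms, so that $i_{n+1}$ is one too. Your index bookkeeping is also exactly right.
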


\begin{proof}
Write the sheaf $\Ibf^\infty(\Lc)$ as $\Ibf^\infty(\Lc)=\colim_{m\geqslant d+1}\Ibf^{m}(\Lc)$. By Theorem \ref{theo:stable_range_isomorphism}, the transition morphisms of this colimit are isomorphisms. Therefore for every $n\geqslant d+1$, the canonical map $i_{n}:\Ibf^{n}(\Lc)\rightarrow\Ibf^\infty(\Lc)$ is an isomorphism. Then $\sign_{n}(\Lc)=\sign_\infty(\Lc)\circ i_n$ is a composition of isomorphisms by Jacobson's theorem \ref{theo:twisted_signature_iso of sheaves} and is consequently an isomorphism.
\end{proof}

By Theorem \ref{theo:twisted_signature_iso of sheaves}, for every $c$ and every $n$, in the composition \[\gamma_{n}^c(X,\Lc):\Hr^c(X,\Ibf^n(\Lc))\xrightarrow{i_n}\Hr^c(X,\Ibf^\infty(\Lc))\xrightarrow{\sign_\infty(\Lc)}\Hr^c(X,\iota_*\Zb(\Lc))\xrightarrow{\cong}\Hr^c(X(\Rb),\Zb(L)),\] each map except possibly for the first map induced by the canonical morphism $i_n:\Ibf^n(\Lc)\rightarrow\Ibf^\infty(\Lc)$ of sheaves is a group isomorphism. Thus the study of the morphism $\gamma_{n}^c(X,\Lc)$ reduces to that of the morphism $\Hr^c(X,\Ibf^n(\Lc))\rightarrow\Hr^c(X,\Ibf^\infty(\Lc))$ and is consequently a purely algebraic problem. We also note that by Corollary \ref{cor:signature_iso_of_sheaves_finite_range}, we do not need to go to infinity to reach the colimit $\Ibf^\infty(\Lc)$. Indeed, if $n\leqslant d$, which is the relevant range for the quadratic real cycle class map for Chow--Witt groups, then multiplication by $\llangle -1\rrangle^{\otimes(d+1-n)}$ from $\Ibf^n(\Lc)$ to $\Ibf^{d+1}(\Lc)$ determines a commutative triangle:
\begin{center}
\begin{tikzcd}[column sep=huge]
\Hr^c(X,\Ibf^n(\Lc)) \arrow[r] \arrow[rd,swap,"\protect{\gamma_{n}^c(X,\Lc)}"] & \Hr^c(X,\Ibf^{d+1}(\Lc)) \arrow[d,"\protect{\gamma_{d+1}^c(X,\Lc)}"] \\
& \Hr^c(X(\Rb),\Zb(L))
\end{tikzcd}
\end{center}
in which the map $\gamma_{d+1}^c(X,\Lc)$ is an isomorphism. This further reduces the study of the maps $\gamma_n^c(X,\Lc)$ for $n\leqslant d$ to that of the maps induced in cohomology by the morphism $\otimes\llangle -1\rrangle^{\otimes(d+1-n)}:\Ibf^n(\Lc)\rightarrow\Ibf^{d+1}(\Lc)$ of sheaves.

\begin{rema}\label{rema:morphism_of_homotopy_modules}
Let $m\geqslant 0$ be an integer. The morphisms $\otimes\llangle -1\rrangle^{\otimes m}:\Ibf^n\rightarrow\Ibf^{n+m}$ assemble into a morphism $\otimes\llangle -1\rrangle^{\otimes m}:\Ibf^*\rightarrow\Ibf^{*+m}$ of graded sheaves. In fact, it is a morphism of homotopy modules, namely, it is compatible with the structure isomorphisms $\Ibf^{n-1}\cong(\Ibf^n)_{-1}$ of sheaves: this is because, since $\llangle -1\rrangle=2$ in the Witt ring, the morphism $\otimes\llangle -1\rrangle^{\otimes m}$ is multiplication by the integer $2^m$, and the structure isomorphisms are morphisms of abelian sheaves. Hence by Proposition \ref{prop:category_of_homotopy_modules}, the kernels $\Kbf_n^m$, the images $\Jbf_n^m$ and the cokernels $\Qbf_n^m$ of the homomorphisms $\otimes\llangle -1\rrangle^{\otimes m}:\Ibf^n\rightarrow\Ibf^{n+m}$ assemble into homotopy modules $\Kbf_*^m$, $\Qbf_*^m$ and $\Jbf_*^m$ respectively. In particular, the component sheaves of these homotopy modules are \emph{contractions}. We conclude that for every $X\in\Sm_\Rb$ and every line bundle $\Lc$ on $X$, the sheaves $\Kbf_n^m$, $\Jbf_n^m$ and $\Qbf_n^m$ can be twisted by $\Lc$ and the resulting twisted sheaves have twisted Rost--Schmid complexes. We use these facts implicitly in the sequel.
\end{rema}

\section{The quadratic real cycle class map for zero-cycles}\label{section:closed_points}

Let $X$ be a connected smooth real algebraic variety of dimension $d$ and let $\Lc$ be a locally free $\Osc_X$-module of rank $1$. We set $L=\Lc(\Rb)$. In this section, we study the homomorphism \[\gamma^d(X,\Lc):\Hr^d(X,\Ibf^d(\Lc))\rightarrow\Hr^d(X(\Rb),\Zb(L))\] of abelian groups. Multiplication by $\llangle -1\rrangle$ determines a morphism $\otimes\llangle -1\rrangle:\Ibf^d(\Lc)\rightarrow\Ibf^{d+1}(\Lc)$ and for every $c\geqslant 0$, denoting by $f:\Hr^c(X,\Ibf^d(\Lc))\rightarrow\Hr^c(X,\Ibf^{d+1}(\Lc))$ induced by $\otimes\llangle -1\rrangle$ in cohomology, the triangle
\begin{center}
\begin{tikzcd}
\Hr^c(X,\Ibf^d(\Lc)) \arrow[r,"f"] \arrow[rd,swap,"\protect{\gamma_d^c(X,\Lc)}"] & \Hr^c(X,\Ibf^{d+1}(\Lc)) \arrow[d,"\protect{\gamma_{d+1}^c(X,\Lc)}"] \\
                                                               & \Hr^c(X(\Rb),\Zb(L))
\end{tikzcd}
\end{center}
commutes. Furthermore, as explained at the end of the last section, the vertical map $\gamma_{d+1}^c(X,\Lc)$ in this triangle is an isomorphism. This reduces the study of $\gamma_d^c(X,\Lc)$ to the analysis of the horizontal arrow $f$, which is purely algebraic. In the sequel, we use this reduction freely and essentially do not distinguish these two homomorphisms $\gamma_{d}^c(X,\Lc)$ and $f$.

\begin{lem}\label{lem:multiplication_by_-1_epimorphism}
The map $\otimes\llangle -1\rrangle:\Ibf^d(\Lc)\rightarrow\Ibf^{d+1}(\Lc)$ of sheaves on $X$ is an epimorphism.
\end{lem}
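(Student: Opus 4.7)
The plan is to show that the cokernel sheaf of $\otimes\llangle -1\rrangle:\Ibf^d(\Lc)\rightarrow\Ibf^{d+1}(\Lc)$ on $X$ vanishes, and to do so by reducing to the statement over fields, where Proposition \ref{prop:stable_range_isomorphism_witt_groups_of_fields} directly applies.

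First I would invoke Remark \ref{rema:morphism_of_homotopy_modules}: the cokernel $\Qbf=\Qbf_d^1$ of the untwisted morphism $\otimes\llangle -1\rrangle:\Ibf^d\rightarrow\Ibf^{d+1}$ on $\Sm_\Rb$ is a contraction (it fits into a homotopy module of cokernels), hence is a strictly $\Ab^1$-invariant sheaf with a natural $\Gm$-action that can be twisted by $\Lc$. Since the twist functor is a sheafified tensor product over $\Zb[\Gm]$ and is therefore right exact, $\Qbf(\Lc)$ agrees with the cokernel of $\otimes\llangle -1\rrangle:\Ibf^d(\Lc)\rightarrow\Ibf^{d+1}(\Lc)$ computed in the category of sheaves on $X$. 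So the claim reduces to showing that $\Qbf(\Lc)$ is the zero sheaf.

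Next I would use the (twisted) Rost--Schmid complex associated with the contraction $\Qbf$. For any open $U\subseteq X$, the head of this complex provides an injection
\[\Qbf(\Lc)(U)\hookrightarrow\bigoplus_{\eta\in U^{(0)}}\Qbf(\kappa(\eta),L_\eta)\cong\bigoplus_{\eta\in U^{(0)}}\Qbf(\kappa(\eta)),\]
the last isomorphism being the one pointed out in the paragraph following the fibre product of sheaves $\Kbf_n^\MW(\Lc)$, valid because $\Qbf$ is a contraction (so the twist by a line bundle on a field is canonically trivial). Since $X$ is connected smooth of dimension $d$, hence equidimensional, every such $\kappa(\eta)$ is a finitely generated extension of $\Rb$ of transcendence degree exactly $d$. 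Proposition \ref{prop:stable_range_isomorphism_witt_groups_of_fields} then tells us that $\otimes\llangle -1\rrangle:\Ir^d(\kappa(\eta))\rightarrow\Ir^{d+1}(\kappa(\eta))$ is surjective, so $\Qbf(\kappa(\eta))=0$. Combining, $\Qbf(\Lc)(U)=0$ for every open $U\subseteq X$, so $\Qbf(\Lc)=0$ and the morphism is an epimorphism.

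The main obstacle is really bookkeeping rather than a mathematical difficulty: one must be sure that twisting by $\Lc$ is compatible with cokernels of sheaf morphisms and that the cokernel sheaf inherits the Rost--Schmid description. Both points are addressed by the infrastructure already in place (Remark \ref{rema:morphism_of_homotopy_modules} packaging everything into a morphism of homotopy modules, and the discussion of Rost--Schmid complexes for contractions twisted by a line bundle). Once this is clear, the proof reduces to Proposition \ref{prop:stable_range_isomorphism_witt_groups_of_fields}, which is exactly the pointwise version of the required surjectivity.
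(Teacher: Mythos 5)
Your proposal is correct and follows essentially the same route as the paper: identify the cokernel with the twist $\Qbf(\Lc)$ of the cokernel of the untwisted morphism (a contraction, hence with a twisted Rost--Schmid complex), use the resulting injection of sections into the groups at the generic points where the twist is trivial, and conclude by the surjectivity of $\otimes\llangle -1\rrangle:\Ir^d(K)\rightarrow\Ir^{d+1}(K)$ from Proposition \ref{prop:stable_range_isomorphism_witt_groups_of_fields} since $K/\Rb$ has transcendence degree $d$. No gaps.
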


\begin{proof}
The cokernel $\Qbf(\Lc)$ of this morphism of sheaves is the twist $\Qbf_d^1(\Lc)$ of the sheaf $\Qbf=\Qbf_d^1$ of Remark \ref{rema:morphism_of_homotopy_modules} and thus has a Rost--Schmid complex. Therefore to establish that $\Qbf(\Lc)$ is the zero sheaf, it suffices to prove that $\Qr(K,\Lc\otimes K)=0$ where $K$ is the function field of $X$. The group $\Qr(K,\Lc\otimes K)$ is (non-canonically) isomorphic to $\Qr(K)$ so it suffices to show that the group $\Qr(K)$ vanishes. As we observed in Remark \ref{rema:colimit_extension_essentially_smooth}, this group is the cokernel of the homomorphism $\otimes\llangle -1\rrangle:\Ir^d(K)\rightarrow\Ir^{d+1}(K)$ of abelian groups. Since $X$ is of dimension $d$, the field $K$ is of transcendence degree $d$ over $\Rb$ thus the morphism $\otimes\llangle -1\rrangle:\Ir^d(K)\rightarrow\Ir^{d+1}(K)$ is surjective by Proposition \ref{prop:stable_range_isomorphism_witt_groups_of_fields}. Consequently, its cokernel $\Qr(K)$ vanishes as required.
\end{proof}

Let $\Kbf(\Lc)$ be the kernel of the morphism $\otimes\llangle -1\rrangle:\Ibf^d(\Lc)\rightarrow\Ibf^{d+1}(\Lc)$, so that we have an exact sequence \[0\rightarrow\Kbf(\Lc)\rightarrow\Ibf^d(\Lc)\xrightarrow{\otimes\llangle -1\rrangle}\Ibf^{d+1}(\Lc)\rightarrow 0\] of sheaves on $X$. We note that multiplication by $\llangle -1\rrangle$ sends $\Ibf^{d+1}(\Lc)\subseteq\Ibf^{d}(\Lc)$ into $\Ibf^{d+2}(\Lc)\subseteq\Ibf^{d+1}(\Lc)$ so there is an induced morphism $\otimes\llangle -1\rrangle:\overline{\Ibf}^d\rightarrow\overline{\Ibf}^{d+1}$ that fits in a commutative ladder with exact rows:
\begin{equation}\label{eq:commutative_ladder_I_to_overline_I}
\begin{tikzcd}[column sep=large]
0 \arrow[r] & \Kbf(\Lc) \arrow[r] \arrow[d,"u"] & \Ibf^d(\Lc) \arrow[r,"\otimes\llangle -1\rrangle"] \arrow[d] & \Ibf^{d+1}(\Lc) \arrow[r] \arrow[d] & 0 \\
0 \arrow[r] & \overline{\Kbf} \arrow[r] & \overline{\Ibf}^d \arrow[r,swap,"\otimes\llangle -1\rrangle"] & \overline{\Ibf}^{d+1} \arrow[r] & 0
\end{tikzcd}
\end{equation}
where the two right vertical morphisms are the quotient maps and $u$ is induced by the right commutative square.

\begin{lem}\label{lem:kernel_as_torsion}
The morphism $u:\Kbf(\Lc)\rightarrow\overline{\Kbf}$ is an isomorphism of sheaves on $X$. In particular, the sheaf $\Kbf(\Lc)$ is independent from $\Lc$.
\end{lem}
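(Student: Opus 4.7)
My plan is to apply the snake lemma to the commutative ladder just displayed, and to identify the resulting connecting map with the multiplication-by-$\llangle-1\rrangle$ map in the stable range, which is an isomorphism by Theorem \ref{theo:stable_range_isomorphism}.

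First, I check that both rows are short exact: the bottom row is exact by definition of $\overline{\Kbf}$, and the top row is exact by the previous lemma (surjectivity of $\otimes\llangle-1\rrangle:\Ibf^d(\Lc)\to\Ibf^{d+1}(\Lc)$) together with the definition of $\Kbf(\Lc)$. The middle and right vertical maps $v:\Ibf^d(\Lc)\to\overline{\Ibf}^d$ and $w:\Ibf^{d+1}(\Lc)\to\overline{\Ibf}^{d+1}$ are the quotient maps by $\Ibf^{d+1}(\Lc)$ and $\Ibf^{d+2}(\Lc)$ respectively; they are surjective, and their kernels are $\Ibf^{d+1}(\Lc)$ and $\Ibf^{d+2}(\Lc)$.

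Applying the snake lemma to this ladder in the abelian category of sheaves on $X$ yields the exact sequence
\[
0\to\Ker u\to\Ibf^{d+1}(\Lc)\xrightarrow{\delta}\Ibf^{d+2}(\Lc)\to\Coker u\to 0,
\]
in which the morphism $\delta$ is the restriction of $\otimes\llangle-1\rrangle:\Ibf^d(\Lc)\to\Ibf^{d+1}(\Lc)$ to the subsheaf $\Ibf^{d+1}(\Lc)\subseteq\Ibf^d(\Lc)$, landing in $\Ibf^{d+2}(\Lc)\subseteq\Ibf^{d+1}(\Lc)$; in other words, $\delta$ is precisely the map $\otimes\llangle-1\rrangle:\Ibf^{d+1}(\Lc)\to\Ibf^{d+2}(\Lc)$. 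By Theorem \ref{theo:stable_range_isomorphism} applied with $n=d$, this map $\delta$ is an isomorphism. Consequently $\Ker u=0$ and $\Coker u=0$, so $u$ is an isomorphism, as required. The independence of $\Kbf(\Lc)$ from $\Lc$ then follows at once, since the target $\overline{\Kbf}$ is defined in terms of $\overline{\Ibf}^d$ and $\overline{\Ibf}^{d+1}$ only, on which $\Gm$ acts trivially.

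The most delicate point is simply the identification of the snake connecting morphism $\delta$ with the stable multiplication-by-$\llangle-1\rrangle$; everything else is formal. Alternatively, one could check the statement on stalks by passing to the twisted Rost--Schmid complex and using Proposition \ref{prop:stable_range_isomorphism_witt_groups_of_fields} to get the same isomorphism on fields, but the snake lemma argument is cleaner.
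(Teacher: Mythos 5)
Your proof is correct and is essentially identical to the paper's: both apply the snake lemma to the same ladder, identify the induced map on the kernels of the quotient maps with $\otimes\llangle-1\rrangle:\Ibf^{d+1}(\Lc)\rightarrow\Ibf^{d+2}(\Lc)$, and invoke Theorem \ref{theo:stable_range_isomorphism} to conclude that $\Ker u$ and $\Coker u$ vanish. (Only a minor terminological quibble: the map $\Ibf^{d+1}(\Lc)\rightarrow\Ibf^{d+2}(\Lc)$ is the induced map on kernels, not the snake connecting morphism, but your identification of it is correct and this does not affect the argument.)
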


\begin{proof}
Let $\Abf(\Lc)$ be the kernel of $u$ and $\Bbf(\Lc)$ be its cokernel. Then we have the following commutative diagram:
\begin{center}
\begin{tikzcd}
            & 0 \arrow[d]                   & 0 \arrow[d] & 0 \arrow[d] & \\
            & \Abf(\Lc) \arrow[d] & \Ibf^{d+1}(\Lc) \arrow[d] & \Ibf^{d+2}(\Lc) \arrow[d] & \\ 
0 \arrow[r] & \Kbf(\Lc) \arrow[r] \arrow[d,"u"] & \Ibf^d(\Lc) \arrow[r,"\otimes\llangle -1\rrangle"] \arrow[d] & \Ibf^{d+1}(\Lc) \arrow[r] \arrow[d] & 0 \\
0 \arrow[r] & \overline{\Kbf} \arrow[r] \arrow[d] & \overline{\Ibf}^d \arrow[r,swap,"\otimes\llangle -1\rrangle"] \arrow[d] & \overline{\Ibf}^{d+1} \arrow[r] \arrow[d] & 0 \\
            & \Bbf(\Lc) \arrow[d] & 0 & 0 & \\
            & 0                 &   &   &
\end{tikzcd}
\end{center}
with exact rows and columns. The snake lemma then provides an exact sequence \[0\rightarrow\Abf(\Lc)\rightarrow\Ibf^{d+1}(\Lc)\xrightarrow{\otimes\llangle -1\rrangle}\Ibf^{d+2}(\Lc)\rightarrow\Bbf(\Lc)\rightarrow 0\] of sheaves on $X$ where the morphism $\otimes\llangle -1\rrangle$ is an isomorphism by Theorem \ref{theo:stable_range_isomorphism}. Consequently, the sheaves $\Abf(\Lc)$ and $\Bbf(\Lc)$ vanish so $u$ is an isomorphism as required.
\end{proof}

Lemmas \ref{lem:multiplication_by_-1_epimorphism} and \ref{lem:kernel_as_torsion} thus show that there is a short exact sequence 
\begin{equation}\label{eq:exact_sequence_kernel_-1_i^d}
0\rightarrow\overline{\Kbf}\rightarrow\Ibf^d(\Lc)\xrightarrow{\otimes\llangle -1\rrangle}\Ibf^{d+1}(\Lc)\rightarrow 0
\end{equation} of sheaves on $X$. Moreover, Lemma \ref{lem:kernel_as_torsion} allows us to use the results of \cite{colliot-theleneZerocyclesCohomologyReal1996}. More precisely, recall that the affirmation of the Milnor conjecture yields an isomorphism $\overline{\Ibf}^*\cong\mathscr{H}^*$ of sheaves of graded rings on $X$. This isomorphism is given by $\llangle a\rrangle\mapsto(a)$ on fields in degree $1$: in particular, multiplication by $\llangle -1\rrangle$ from $\overline{\Ibf}^n$ to $\overline{\Ibf}^{n+1}$ is identified with cup-product with $(-1)$ from $\mathscr{H}^n$ to $\mathscr{H}^{n+1}$ by this isomorphism. Thus the sheaf $\overline{\Kbf}$ of Lemma \ref{lem:kernel_as_torsion} can be identified with the kernel of the map $(-1)\cup\text{--}:\mathscr{H}^d\rightarrow\mathscr{H}^{d+1}$ of \cite[Equation (2.6)]{colliot-theleneZerocyclesCohomologyReal1996}.

\begin{prop}\label{prop:injectivity_real_cycle_class_map_closed_points}
We have $\Hr^d(X,\overline{\Kbf})=0$ if $X$ is not proper or $X(\Rb)$ is non-empty.
\end{prop}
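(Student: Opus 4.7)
The plan is to combine the identification of $\overline{\Kbf}$ already recorded in the paragraph preceding the proposition with the main cohomological vanishing results of \cite{colliot-theleneZerocyclesCohomologyReal1996}. The sheaf $\overline{\Kbf}$ has just been identified with the kernel of $(-1)\cup\colon\mathscr{H}^d\to\mathscr{H}^{d+1}$, which is the sheaf appearing at \cite[Equation (2.6)]{colliot-theleneZerocyclesCohomologyReal1996}, and so the asserted vanishing $\Hr^d(X,\overline{\Kbf})=0$ under the stated hypothesis is precisely one of the cohomological vanishings established in that paper.

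To unpack this within the present framework, I would first simplify by invoking Corollary \ref{cor:signature_iso_of_sheaves_finite_range} in its mod-$2$ form: the signature $h_{d+1}$ gives an isomorphism $\mathscr{H}^{d+1}\cong\iota_*\Zb/2$, and the compatibility $h_{d+1}\circ((-1)\cup)=h_d$---which holds because the normalized signature of $\llangle-1\rrangle\otimes x$ equals that of $x$---identifies $\overline{\Kbf}$ with the kernel of the signature morphism $h_d\colon\mathscr{H}^d\to\iota_*\Zb/2$. Setting $\Jbf=\Im(h_d)\subseteq\iota_*\Zb/2$ and splitting the short exact sequence $0\to\overline{\Kbf}\to\mathscr{H}^d\to\Jbf\to 0$, the long exact sequence in cohomology, together with the vanishing $\Hr^{d+1}(X,\overline{\Kbf})=0$ coming from the Nisnevich cohomological dimension bound, reduces the statement to showing that $\Hr^d(X,\mathscr{H}^d)\to\Hr^d(X,\Jbf)$ is injective and that $\Hr^{d-1}(X,\mathscr{H}^d)\to\Hr^{d-1}(X,\Jbf)$ is surjective.

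The main obstacle is the geometric content of CTP that supplies these two conditions. The Rost--Schmid complex of $\overline{\Kbf}$ exhibits $\Hr^d(X,\overline{\Kbf})$ as the cokernel of a residue map whose target is $\bigoplus_{\tilde x}\Zb/2$ indexed by the non-real closed points $\tilde x\in X^{(d)}$, since $(\overline{\Kbf})_{-d}(\kappa(\tilde x))$ vanishes at real points and is $\Zb/2$ at non-real ones. For each such $\tilde x$, one must construct an integral curve $C\subseteq X$ through $\tilde x$ and an element $f\in\kappa(C)^\times$ that is a norm from $\kappa(C)(\sqrt{-1})$ (so that it defines a class in $(\overline{\Kbf})_{-(d-1)}(\kappa(C))$), has odd valuation at $\tilde x$, and has even valuation at every other non-real closed point of the closure of $C$. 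The hypothesis that $X$ is non-proper or that $X(\Rb)\neq\emptyset$ supplies exactly the freedom needed: the unwanted residues of such an $f$ at other closed points of $C$ can be arranged to land either at real points of $X$ (where the Rost--Schmid contribution vanishes) or at points at infinity in a smooth compactification of $X$ (where they lie outside of $X$), in neither case contributing to the target of the cokernel. This argument is the technical heart of \cite{colliot-theleneZerocyclesCohomologyReal1996}, and the proof of the proposition amounts to invoking its conclusion.
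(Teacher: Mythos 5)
Your reduction is essentially the paper's: both identify $\overline{\Kbf}$ with $\Ker\bigl((-1)\cup\colon\Hsc^d\to\Hsc^{d+1}\bigr)$, feed the short exact sequence $0\to\overline{\Kbf}\to\Hsc^d\to\Hsc^{d+1}\to 0$ into the long exact cohomology sequence, and deduce the vanishing of $\Hr^d(X,\overline{\Kbf})$ from the surjectivity of $\Hr^{d-1}(X,\Hsc^d)\to\Hr^{d-1}(X,\Hsc^{d+1})$ and the injectivity of $\Hr^d(X,\Hsc^d)\to\Hr^d(X,\Hsc^{d+1})$, both of which are \cite[Theorem 3.2]{colliot-theleneZerocyclesCohomologyReal1996} --- the paper attributes the whole proposition to that reference. (Your detour through $h_{d+1}$ and $\Jbf=\Im h_d$ is harmless, since $(-1)\cup$ is an epimorphism of sheaves so $\Jbf=\iota_*\Zb/2\cong\Hsc^{d+1}$; and the vanishing of $\Hr^{d+1}(X,\overline{\Kbf})$ plays no role in this degree.) The one divergence is your gloss on the cited result: the paper recalls the actual mechanism, namely that the transfer along $p\colon X'=X\times_\Rb\Spec\Cb\to X$ presents $\Hr^d(X,\overline{\Kbf})$ as a quotient of $\CH^d(X')/2$ mapping to $\CH^d(X)/2$ via $p_*$, which vanishes because $\CH^d(X')$ is divisible when $X$ is non-proper and because $p_*[x']=p_*p^*[x]=2[x]$ when $x\in X(\Rb)$; your sketch of the ``technical heart'' (curves through each non-real closed point carrying norms from $\kappa(C)(\sqrt{-1})$ with prescribed valuation parities) is a plausible but speculative reconstruction that is not how the reference, or the paper's summary of it, proceeds, and you do not carry it out. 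Since you explicitly rest the proof on invoking the conclusion of \cite{colliot-theleneZerocyclesCohomologyReal1996}, this is not a gap, but the transfer argument is the one to learn if you want the proof to be self-contained.
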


This is \cite[Theorem 3.2 (d)]{colliot-theleneZerocyclesCohomologyReal1996} but for the convenience of the reader, we recall the proof. It makes use of the \emph{transfer} along finite étale maps. Set $X'=X\times_\Rb\Spec\Cb$; we still denote the Zariski sheafification of the presheaf $V\mapsto\Hr_\et^d(V,\Zb/2)$ on $X'$ by $\Hsc^d$. Let $p:X'\rightarrow X$ be the projection. Then $p$ is finite étale of degree $2$, and the pullback and transfer along $p$ induce homomorphisms $p^*:\Hsc^d\rightarrow p_*\Hsc^d$ and $p_*:p_*\Hsc^d\rightarrow\Hsc^d$ of sheaves on $X$ such that the sequence
\begin{equation}\label{eq:exact_sequence_étale_cohomology_sheaves}
\Hsc^d\xrightarrow{p^*}p_*\Hsc^d\xrightarrow{p_*}\Hsc^d\xrightarrow{(-1)\cup\text{--}}\Hsc^{d+1}
\end{equation}
is exact \cite[Lemma 2.2.1]{colliot-theleneZerocyclesCohomologyReal1996}. Moreover, for every $j\geqslant 1$, the stalk of the higher direct image $\Rr^jp_*\Hsc^d$ at the point $x\in X$ is $\Hr^j(Y,\Hsc^d)$ where $Y$ is the semi-localisation of $X'$ at the finite set $p^{-1}(x)$. By Bloch--Ogus' acyclicity result for semi-local schemes cited in the proof of Theorem \ref{theo:unramified_étale_cohomology}, the group $\Hr^j(Y,\Hsc^d)$ vanishes so $\Rr^jp_*\Hsc^d=0$ and it follows that we have a natural isomorphism $\Hr^c(X,p_*\Hsc^d)=\Hr^c(X',\Hsc^d)$ for every $c\geqslant 0$.

\begin{proof}
The exact sequence (\ref{eq:exact_sequence_kernel_-1_i^d}) of sheaves on $X$ yields an exact sequence \[\Hr^{d-1}(X,\Hsc^d)\rightarrow\Hr^{d-1}(X,\Hsc^{d+1})\rightarrow\Hr^d(X,\overline{\Kbf})\rightarrow\Hr^d(X,\Hsc^d)\rightarrow\Hr^d(X,\Hsc^{d+1})\] where the first map is surjective and the last map is injective, in both cases owing to \cite[Theorem 3.2 (d)]{colliot-theleneZerocyclesCohomologyReal1996}. This implies the vanishing of the group $\Hr^d(X,\overline{\Kbf})$. For the convenience of the reader, we recall the proof of \emph{loc. cit.} In view of the exact sequence (\ref{eq:exact_sequence_étale_cohomology_sheaves}) of sheaves, the transfer along $p$ induces an epimorphism $p_*:p_*\Hsc^d\rightarrow\overline{\Kbf}$ of sheaves on $X$. Let $\overline{\Qbf}$ denote the kernel of $p_*$. The map $\Hr^d(X',\Hsc^d)\rightarrow\Hr^d(X,\overline{\Kbf})$ induced by the transfer is surjective: indeed, its cokernel is a subgroup of $\Hr^{d+1}(X,\overline{\Qbf})$ and the scheme $X$ is of Zariski cohomological dimension $\leqslant d$ since $X$ is Noetherian (Example 2.2). By the same argument, the epimorphism $p^*:\Hsc^d\rightarrow\overline{\Qbf}$ of sheaves induces an epimorphism $\Hr^d(X,\Hsc^d)\rightarrow\Hr^d(X,\overline{\Qbf})$ of abelian groups, and the composite \[\Hr^d(X,\Hsc^d)\rightarrow\Hr^d(X,\overline{\Qbf})\rightarrow\Hr^d(X',\Hsc^d)\] is identified by the natural isomorphisms $\Hr^d(X,\Hsc^d)\cong\CH^d(X)/2$ and $\Hr^d(X',\Hsc^d)\cong\CH^d(X')/2$ with the pullback homomorphism $p^*:\CH^d(X)/2\rightarrow\CH^d(X')/2$ for the flat morphism $p$. Therefore the cohomology long exact sequence for the epimorphism $p_*\Hsc^d\rightarrow\overline{\Kbf}$ induces an exact sequence \[\CH^d(X)/2\xrightarrow{p^*}\CH^d(X')/2\rightarrow\Hr^d(X,\overline{\Kbf})\rightarrow 0\] of abelian groups. Thus to show that $\Hr^d(X,\overline{\Kbf})$ vanishes under the stated hypotheses, it suffices to prove that $p^*:\CH^d(X)/2\rightarrow\CH^d(X')/2$ is surjective if $X$ is not proper or $X(\Rb)$ is non-empty.

Assume that $X$ is not proper. Then by \cite[\href{https://stacks.math.columbia.edu/tag/03GN}{Tag 03GN}]{stacks-project}, the $\Cb$-scheme $X'$ is not proper. The scheme $X'$ is either connected; or it is disconnected, which means that $X$ is obtained as the pushforward of a $\Cb$-scheme $Y$ along the map $\Spec\Cb\rightarrow\Spec\Rb$ and $X'$ is the disjoint union of two copies of $Y$ (\cite[Lemma 1.1]{colliot-theleneZerocyclesCohomologyReal1996}). In any case, the $\Cb$-scheme $X'$ does not have a proper connected component: by \cite[Lemma 1.2]{colliot-theleneZerocyclesCohomologyReal1996}, the group $\CH^d(X')$ is divisible hence $\CH^d(X')/2=0$ and the surjectivity of the map $p^*:\CH^d(X)/2\rightarrow\CH^d(X')/2$ is then immediate. Now assume that $X$ is proper and that $X(\Rb)$ is not empty; in particular, the scheme $X'$ is then connected. Let $x\in X(\Rb)$ and let $x'$ be the unique point of $X'$ lying over $x$. Since $X'$ is proper, the structure map $X'\rightarrow\Spec\Cb$ induces a morphism $\CH^d(X')\rightarrow\CH^0(\Spec\Cb)=\Zb$ which sends the class $[x']$ of $x'$ to $1$. According to \cite[Lemma 1.2]{colliot-theleneZerocyclesCohomologyReal1996}, the kernel of this morphism is a divisible group, therefore the class $[x']$ generates $\CH^d(X')/2$. But since $p$ is étale, the pullback homomorphism $p^*:\CH^d(X)\rightarrow\CH^d(X')$ carries the class $[x]$ of $x$ to $[x']$. In particular, the morphism $p^*:\CH^d(X)/2\rightarrow\CH^d(X')/2$ is surjective as claimed.
\end{proof}

\begin{rema}
A similar argument can be carried out to show more directly that $\Hr^d(X,\Kbf(\Lc))=0$, using the fact that the transfer along the projection map $p:X'\rightarrow X$ exhibits $\Kbf(\Lc)$ as a quotient of $p_*\Ibf^d(\Lc)$.
\end{rema}

\begin{theo}\label{theo:conjecture_for_closed_points}
The group homomorphism $\gamma^d(X,\Lc)$ is surjective. It is injective if either $X$ is not proper or $X(\Rb)$ is non-empty.
\end{theo}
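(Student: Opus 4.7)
The strategy is to exploit the reduction explained right before the theorem statement: the map $\gamma^d(X,\Lc)$ has the same kernel and cokernel as the map $f:\Hr^d(X,\Ibf^d(\Lc))\rightarrow\Hr^d(X,\Ibf^{d+1}(\Lc))$ induced by $\otimes\llangle-1\rrangle$, since it factors through the isomorphism $\gamma_{d+1}^d(X,\Lc)$. Thus the entire proof is reduced to a cohomological computation involving the short exact sequence
\[0\rightarrow\Kbf(\Lc)\rightarrow\Ibf^d(\Lc)\xrightarrow{\otimes\llangle-1\rrangle}\Ibf^{d+1}(\Lc)\rightarrow 0\]
established just above (surjectivity was shown in the lemma preceding the kernel's identification).

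The plan is to extract the relevant portion of the associated long exact cohomology sequence on $X$:
\[\Hr^d(X,\Kbf(\Lc))\rightarrow\Hr^d(X,\Ibf^d(\Lc))\xrightarrow{f}\Hr^d(X,\Ibf^{d+1}(\Lc))\rightarrow\Hr^{d+1}(X,\Kbf(\Lc)).\]
For surjectivity, I would observe that $X$ is Noetherian of Krull dimension $d$, hence of Zariski cohomological dimension $\leqslant d$ by Example \ref{exe:zariski_cohomological_dimension}. Applied to the sheaf $\Kbf(\Lc)$ (and using that the Zariski and Nisnevich cohomology groups of strictly $\Ab^1$-invariant sheaves coincide, as recalled in Remark \ref{rema:notation_cohomology_groups}), this gives $\Hr^{d+1}(X,\Kbf(\Lc))=0$. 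Therefore $f$ is surjective and so is $\gamma^d(X,\Lc)$.

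For injectivity, I would exploit Lemma \ref{lem:kernel_as_torsion}, which provides an isomorphism $\Kbf(\Lc)\cong\overline{\Kbf}$ of sheaves on $X$ independent of the twist $\Lc$. This reduces the vanishing of $\Hr^d(X,\Kbf(\Lc))$ to that of $\Hr^d(X,\overline{\Kbf})$, which is precisely the content of Proposition \ref{prop:injectivity_real_cycle_class_map_closed_points} under the stated hypotheses (either $X$ non-proper or $X(\Rb)\neq\emptyset$). Feeding this back into the long exact sequence yields injectivity of $f$, hence of $\gamma^d(X,\Lc)$.

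There is no real obstacle here: both surjectivity and injectivity follow from plugging in results already established in the section. The only thing requiring any attention is to verify that the triangle at the end of the preliminary section really allows one to replace $\gamma^d(X,\Lc)$ by $f$ for questions of surjectivity and injectivity, which is immediate since $\gamma_{d+1}^d(X,\Lc)$ is an isomorphism by Corollary \ref{cor:signature_iso_of_sheaves_finite_range} combined with Theorem \ref{theo:twisted_signature_iso of sheaves}.
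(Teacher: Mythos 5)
Your proof is correct and follows essentially the same route as the paper: the long exact cohomology sequence of $0\rightarrow\Kbf(\Lc)\cong\overline{\Kbf}\rightarrow\Ibf^d(\Lc)\rightarrow\Ibf^{d+1}(\Lc)\rightarrow 0$, with surjectivity from the cohomological dimension bound $\Hr^{d+1}(X,\overline{\Kbf})=0$ and injectivity from the vanishing $\Hr^d(X,\overline{\Kbf})=0$ of Proposition \ref{prop:injectivity_real_cycle_class_map_closed_points}. The only cosmetic difference is that the paper writes the short exact sequence directly with $\overline{\Kbf}$, the identification of Lemma \ref{lem:kernel_as_torsion} having already been made.
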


\begin{proof}
The short exact sequence \[0\rightarrow\overline{\Kbf}\rightarrow\Ibf^d(\Lc)\xrightarrow{\otimes\llangle -1\rrangle}\Ibf^{d+1}(\Lc)\rightarrow 0\] of sheaves (\ref{eq:exact_sequence_kernel_-1_i^d}) deduced from Lemmas \ref{lem:multiplication_by_-1_epimorphism} and Lemma \ref{lem:kernel_as_torsion} induces an exact sequence \[\Hr^d(X,\overline{\Kbf})\rightarrow\Hr^d(X,\Ibf^d(\Lc))\xrightarrow{f}\Hr^d(X,\Ibf^{d+1}(\Lc))\rightarrow\Hr^{d+1}(X,\overline{\Kbf})\] in cohomology. In this exact sequence, as noted at the beginning of this section, the morphism $f$ induced by the map $\otimes\llangle -1\rrangle$ is identified with the morphism $\gamma^d(X,\Lc)$ by the isomorphism $\gamma_{d+1}^d(X,\Lc)$. Thus it suffices to show that $f$ is surjective, and that $f$ is injective if $X$ is not proper or $X(\Rb)\neq\emptyset$. The statement regarding injectivity is a direct consequence of the vanishing result of Proposition \ref{prop:injectivity_real_cycle_class_map_closed_points}. On the other hand, the scheme $X$ is of Zariski cohomological dimension $\leqslant d$ hence the group $\Hr^{d+1}(X,\overline{\Kbf})$ vanishes and thus $f$ is surjective as required. 
\end{proof}

\begin{rema}
Theorem \ref{theo:conjecture_for_closed_points} appears, with essentially the same proof, in \cite[Proposition 3.4]{hornbostelFewComputationsReal2024} (in which \og if $X$ is affine \fg{} may be replaced by \og if $X$ is not proper \fg{} in view of the proof of the proposition given by Hornbostel).
\end{rema}

\begin{rema}
Assume that $X(\Rb)=\emptyset$. Then $\Hr^0(U,\Ibf^{d+1}(\Lc))\cong\Hr^0(U(\Rb),\Zb(L))$ vanishes for every open subset $U$ of $X$ hence the sheaf $\Ibf^{d+1}(\Lc)$ on $X$ vanishes. We deduce that the quotient map $\Ibf^d(\Lc)\rightarrow\overline{\Ibf}^d$ is an isomorphism of sheaves, so that we have a string \[\Hr^d(X,\Ibf^d(\Lc))\xrightarrow{\cong}\Hr^d(X,\overline{\Ibf}^d)\cong\Hr^d(X,\Hsc^d)\cong\CH^d(X)/2\] of isomorphisms of abelian groups. If moreover $X$ is proper, then the group $\CH^d(X)/2$ is isomorphic to $\Zb/2$ by \cite[Theorem 1.3 (b)]{colliot-theleneZerocyclesCohomologyReal1996}: in particular, the map $\Zb/2=\Hr^d(X,\Ibf^d(\Lc))\rightarrow\Hr^d(X(\Rb),\Zb(L))=0$ is not injective. Consequently, the reverse implication in the last assertion of Theorem \ref{theo:conjecture_for_closed_points} is also true.
\end{rema}

\section{Descending exponent bounds}

\subsection{General framework}

As before, we let $X$ be a connected smooth algebraic variety over $\Rb$ and $\Lc$ be a line bundle on $X$, and we set $L=\Lc(\Rb)$. In this subsection, we explain how to exploit exponent results for cohomology in higher powers of the fundamental ideal to get exponent bounds for the range in which we are investigating the image of the quadratic real cycle class map. The general framework for results of this type is provided by the following lemma.

\begin{lem}\label{lem:descending_exponents}
Let $c$ and $m\geqslant n$ be non-negative integers. Then we have inclusions \[2^{m-n}\Im\gamma_m^c(X,\Lc)\subseteq\Im\gamma_n^c(X,\Lc)\subseteq\Im\gamma_m^c(X,\Lc)\subseteq\Hr^c(X(\Rb),\Zb(L)).\] Consequently, the group $\Im\gamma_m^c(X,\Lc)/\Im\gamma_n^c(X,\Lc)$ has exponent $2^{m-n}$; in particular, if the morphism $\gamma_m^c(X,\Lc)$ is surjective, then the cokernel of $\gamma_n^c(X,\Lc)$ has exponent $2^{m-n}$.
\end{lem}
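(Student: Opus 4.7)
The plan is to deduce both inclusions directly from the two compatibility diagrams satisfied by the twisted signature maps $\sign_k(\Lc):\Ibf^k(\Lc)\rightarrow\iota_*\Zb(L)$ that were set up earlier in the preliminaries.

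First, I would iterate the triangle $\sign_n(\Lc)=\sign_{n+1}(\Lc)\circ(\otimes\llangle -1\rrangle)$ to obtain the identity $\sign_n(\Lc)=\sign_m(\Lc)\circ(\otimes\llangle -1\rrangle^{m-n})$ of morphisms of sheaves on $X$. Passing to cohomology in degree $c$ and composing with the isomorphism $\Hr^c(X,\iota_*\Zb(L))\cong\Hr^c(X(\Rb),\Zb(L))$ provided by Proposition \ref{prop:locally_constant_cohomology}, this yields the factorisation $\gamma_n^c(X,\Lc)=\gamma_m^c(X,\Lc)\circ(\otimes\llangle -1\rrangle^{m-n})_*$, where the last map goes from $\Hr^c(X,\Ibf^n(\Lc))$ to $\Hr^c(X,\Ibf^m(\Lc))$. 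Taking images at once gives the second inclusion $\Im\gamma_n^c(X,\Lc)\subseteq\Im\gamma_m^c(X,\Lc)$.

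Second, I would exploit the commutative square $\sign_n(\Lc)\circ\iota=2^{m-n}\cdot\sign_m(\Lc)$, where $\iota:\Ibf^m(\Lc)\hookrightarrow\Ibf^n(\Lc)$ is the canonical inclusion; this square is the one displayed just above the definition of $\gamma_{n/m}^c(X,\Lc)$ in the previous subsection. Passing to cohomology yields $\gamma_n^c(X,\Lc)\circ\iota_*=2^{m-n}\cdot\gamma_m^c(X,\Lc)$, and taking images produces the first inclusion $2^{m-n}\Im\gamma_m^c(X,\Lc)\subseteq\Im\gamma_n^c(X,\Lc)$.

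Finally, the exponent statement is a direct application of Example \ref{exe:link_exponent_size}: the inclusion $2^{m-n}\Im\gamma_m^c(X,\Lc)\subseteq\Im\gamma_n^c(X,\Lc)$ inside $\Im\gamma_m^c(X,\Lc)$ says exactly that the quotient $\Im\gamma_m^c(X,\Lc)/\Im\gamma_n^c(X,\Lc)$ has exponent $2^{m-n}$; and if $\gamma_m^c(X,\Lc)$ is surjective this quotient coincides with $\Coker\gamma_n^c(X,\Lc)$. I do not anticipate any serious obstacle here: the entire argument is bookkeeping on the two signature-compatibility diagrams that have already been established.
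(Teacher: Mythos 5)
Your proof is correct and follows essentially the same route as the paper's: the second inclusion comes from factoring $\gamma_n^c(X,\Lc)$ through $\gamma_m^c(X,\Lc)$ via the map induced by $\otimes\llangle -1\rrangle^{\otimes(m-n)}$, and the first from the commutative square relating the inclusion $\Ibf^m(\Lc)\subseteq\Ibf^n(\Lc)$ to multiplication by $2^{m-n}$ on $\iota_*\Zb(\Lc)$, with the exponent claims then read off via Example \ref{exe:link_exponent_size}. (Incidentally, the exponent in the lemma's statement should read $2^{m-n}$, as you correctly write, rather than $2^{n-m}$.)
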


\begin{proof}
By construction, the morphism $\otimes\llangle -1\rrangle^{\otimes(m-n)}:\Ibf^n(\Lc)\rightarrow\Ibf^m(\Lc)$ of sheaves induces a commutative triangle
\begin{center}
\begin{tikzcd}[column sep=huge]
\Hr^c(X,\Ibf^n(\Lc)) \arrow[r] \arrow[rd,swap,"\protect{\gamma_{n}^c(X,\Lc)}"] & \Hr^c(X,\Ibf^{m}(\Lc)) \arrow[d,"\protect{\gamma_{m}^c(X,\Lc)}"] \\
& \Hr^c(X(\Rb),\Zb(L))
\end{tikzcd}
\end{center}
In other words, the morphism $\gamma_n^c(X,\Lc)$ factors through the map $\gamma_m^c(X,\Lc)$: this proves that $\Im\gamma_n^c(X,\Lc)\subseteq\Im\gamma_m^c(X,\Lc)$. On the other hand, recall the commutative square
\begin{center}
\begin{tikzcd}
\Ibf^m(\Lc) \arrow[r] \arrow[d,swap,"\protect{\sign_m(\Lc)}"] & \Ibf^n(\Lc) \arrow[d,"\protect{\sign_n(\Lc)}"] \\
\iota_*\Zb(\Lc) \arrow[r,swap,"2^{m-n}"]                       & \iota_*\Zb(\Lc)
\end{tikzcd}
\end{center}
of sheaves on $X$ (Diagram (\ref{eq:inclusion_mth_power_into_n})). This square induces a commutative square
\begin{center}
\begin{tikzcd}
\Hr^c(X,\Ibf^m(\Lc)) \arrow[r] \arrow[d,swap,"\protect{\gamma_{m}^c(X,\Lc)}"] & \Hr^c(X,\Ibf^n(\Lc)) \arrow[d,"\protect{\gamma_{n}^c(X,\Lc)}"] \\
\Hr^c(X(\Rb),\Zb(L)) \arrow[r,swap,"2^{m-n}"] & \Hr^c(X(\Rb),\Zb(L))
\end{tikzcd}
\end{center}
of abelian groups. It follows that the image of the composite map \[\Hr^c(X,\Ibf^m(\Lc))\rightarrow\Hr^c(X,\Ibf^n(\Lc))\xrightarrow{\gamma_{n}^c(X,\Lc)}\Hr^c(X(\Rb),\Zb(L))\] contains $2^{m-n}\Im\gamma_{m}^c(X,\Lc)$: \emph{a fortiori}, the image of $\gamma_{n}^c(X,\Lc)$ contains $2^{m-n}\Im\gamma_{m}^c(X,\Lc)$. By Example \ref{exe:link_exponent_size}, this yields the claims regarding exponents.
\end{proof}

In fact, in the situation of Lemma \ref{lem:descending_exponents}, we can sometimes be more precise regarding the image of the morphism $\gamma_n^c(X,\Lc)$. Let again $c$ and $m\geqslant n$ be non-negative integers and consider the exact sequence \[0\rightarrow\Ibf^m(\Lc)\rightarrow\Ibf^n(\Lc)\rightarrow\Ibf^n(\Lc)/\Ibf^m(\Lc)\rightarrow 0\] of abelian sheaves on $X$. Then there is an induced commutative ladder
\begin{center}
\begin{tikzcd}
\Hr^c(X,\Ibf^m(\Lc)) \arrow[r] \arrow[d,"\protect{\gamma_{m}^c(X,\Lc)}"] & \Hr^c(X,\Ibf^n(\Lc)) \arrow[r] \arrow[d,"\protect{\gamma_{n}^c(X,\Lc)}"] & \Hr^c(X,\Ibf^n(\Lc)/\Ibf^m(\Lc)) \arrow[r,"\partial"] \arrow[d,"\protect{\gamma_{n/m}^c(X,\Lc)}"] & \Hr^{c+1}(X,\Ibf^m(\Lc)) \arrow[d,"\protect{\gamma_{m}^{c+1}(X,\Lc)}"] \\
\Hr^c(X(\Rb),\Zb(L)) \arrow[r,swap,"2^{m-n}"] & \Hr^c(X(\Rb),\Zb(L)) \arrow[r,swap,"\rho(L)"] & \Hr^c(X(\Rb),\Zb/2^{m-n}(L)) \arrow[r] & \Hr^{c+1}(X(\Rb),\Zb(L))
\end{tikzcd}
\end{center}
of abelian groups with exact rows, where $\rho(L):\Hr^c(X(\Rb),\Zb(L))\rightarrow\Hr^c(X(\Rb),\Zb/2^{m-n}(L))$ is reduction mod $2^{m-n}$ of the coefficients.

\begin{lem}\label{lem:image_cycle_class_four_lemma}
If the morphism $\gamma_m^{c+1}(X,\Lc)$ is injective on $\Im\partial$ and the map $\gamma_m^c(X,\Lc)$ is surjective, then the image of $\gamma_{n}^c(X,\Lc)$ is the inverse image under $\rho(L)$ of $\Im\gamma_{n/m}^c(X,\Lc)$.
\end{lem}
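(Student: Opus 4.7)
The plan is essentially to observe that the commutative ladder displayed immediately before the statement is set up precisely to allow a direct application of the four-lemma (Lemma \ref{lem:four_lemma}). Both rows of that ladder are already known to be exact: the top row is the long exact cohomology sequence coming from the short exact sequence $0\rightarrow\Ibf^m(\Lc)\rightarrow\Ibf^n(\Lc)\rightarrow\Ibf^n(\Lc)/\Ibf^m(\Lc)\rightarrow 0$ of sheaves on $X$, and the bottom row is the long exact sequence coming from $0\rightarrow\Zb(L)\xrightarrow{2^{m-n}}\Zb(L)\rightarrow\Zb/2^{m-n}(L)\rightarrow 0$ on $X(\Rb)$. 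So no further preparation of the ladder is needed.

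Concretely, matching the notation of Lemma \ref{lem:four_lemma}, I would take $p=\gamma_m^c(X,\Lc)$, $f=\gamma_n^c(X,\Lc)$, $g=\gamma_{n/m}^c(X,\Lc)$ and $i=\gamma_m^{c+1}(X,\Lc)$, with $v=\partial$ and $u'=\rho(L)$. Under this dictionary the two hypotheses of the four-lemma translate exactly into the two hypotheses of the present lemma: the surjectivity of $p$ is the surjectivity of $\gamma_m^c(X,\Lc)$, and the injectivity of $i_{|\Im v}$ is precisely the injectivity of $\gamma_m^{c+1}(X,\Lc)$ on $\Im\partial$.

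Applying Lemma \ref{lem:four_lemma} then yields $\Im f=u'^{-1}(\Im g)$, that is,
\[
\Im\gamma_n^c(X,\Lc)=\rho(L)^{-1}\bigl(\Im\gamma_{n/m}^c(X,\Lc)\bigr),
\]
which is the desired conclusion. There is no genuine obstacle in this argument; the only thing to verify carefully is that the indices on the vertical maps in the ladder do match the roles of $p$, $f$, $g$, $i$ in the four-lemma, but this is a direct bookkeeping check. In particular, no further property of the real cycle class map beyond what is built into the commutative ladder needs to be used.
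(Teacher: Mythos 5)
Your proposal is correct and coincides with the paper's own argument: the proof given in the text is precisely that the hypotheses are exactly those needed to apply the four-lemma (Lemma \ref{lem:four_lemma}) to the displayed commutative ladder, whose rows are the long exact sequences you identify. Your explicit dictionary $p=\gamma_m^c(X,\Lc)$, $f=\gamma_n^c(X,\Lc)$, $g=\gamma_{n/m}^c(X,\Lc)$, $i=\gamma_m^{c+1}(X,\Lc)$ is the correct bookkeeping and nothing further is needed.
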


\begin{proof}
Under these hypotheses, the usual four lemma applies and immediately yields the result.
\end{proof}

Let $d$ denote the dimension of $X$. By Theorem \ref{theo:twisted_signature_iso of sheaves}, the morphism $\gamma_{d+1}^c(X,\Lc):\Hr^c(X,\Ibf^{d+1}(\Lc))\rightarrow\Hr^c(X(\Rb),\Zb(L))$ is an isomorphism for every non-negative integer $c$. Thus we can apply Lemma \ref{lem:image_cycle_class_four_lemma} with $m=d+1$, with $n$ any integer such that $0\leqslant n\leqslant d$, and with every $c\geqslant 0$. In this situation, Lemma \ref{lem:image_cycle_class_four_lemma} tells us that the group $\Im\gamma_n^c(X,\Lc)$ is the inverse image of $\Im\gamma_{n/d+1}^c(X,\Lc)\subseteq\Hr^c(X(\Rb),\Zb/2^{d+1-n}(L))$ under the reduction mod $2^{d+1-n}$ map $\rho(L):\Hr^c(X(\Rb),\Zb(L))\rightarrow\Hr^c(X(\Rb),\Zb/2^{d+1-n}(L))$. In particular, since $0\in\Im\gamma_{n/d+1}^c(X,\Lc)$, the inverse image $2^{d+1-n}\Hr^c(X(\Rb),\Zb(L))$ of $\{0\}$ is contained in $\Im\gamma_n^c(X,\Lc)$. The following corollary is then immediate:

\begin{cor}\label{cor:upper_bound_for_exponent}
For every $0\leqslant n\leqslant d$ and every $c\geqslant 0$, the group $\Coker\gamma_n^c(X,\Lc)$ has exponent $2^{d+1-n}$.
\end{cor}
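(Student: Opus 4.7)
The plan is to formalise the reasoning already informally sketched in the paragraph immediately preceding the statement: specialise Lemma \ref{lem:image_cycle_class_four_lemma} to the case $m = d+1$, where the real cycle class map for $\Ibf$-cohomology becomes an isomorphism, and then pass to cokernels via Example \ref{exe:link_exponent_size}.

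First I would verify the hypotheses of Lemma \ref{lem:image_cycle_class_four_lemma} for $m = d+1$, namely that $\gamma_{d+1}^c(X,\Lc)$ is surjective and $\gamma_{d+1}^{c+1}(X,\Lc)$ is injective on $\Im\partial$. Both are satisfied in the strong form that $\gamma_{d+1}^i(X,\Lc)$ is an isomorphism of abelian groups for every $i$. Indeed, by Corollary \ref{cor:signature_iso_of_sheaves_finite_range}, the twisted signature morphism $\sign_{d+1}(\Lc):\Ibf^{d+1}(\Lc)\rightarrow\iota_*\Zb(\Lc)$ is already an isomorphism of sheaves on $X$; combining the induced isomorphism on cohomology with Proposition \ref{prop:locally_constant_cohomology} shows that $\gamma_{d+1}^i(X,\Lc)$ is an isomorphism in every cohomological degree.

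Next, I would apply Lemma \ref{lem:image_cycle_class_four_lemma} with $m = d+1$ to conclude that $\Im\gamma_n^c(X,\Lc)$ is the inverse image of $\Im\gamma_{n/d+1}^c(X,\Lc)$ under the reduction map $\rho(L):\Hr^c(X(\Rb),\Zb(L))\rightarrow\Hr^c(X(\Rb),\Zb/2^{d+1-n}(L))$. Since $0\in\Im\gamma_{n/d+1}^c(X,\Lc)$, the inverse image of $0$ under $\rho(L)$, which is exactly the subgroup $2^{d+1-n}\Hr^c(X(\Rb),\Zb(L))$, is contained in $\Im\gamma_n^c(X,\Lc)$. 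By Example \ref{exe:link_exponent_size}, this inclusion is equivalent to the quotient $\Coker\gamma_n^c(X,\Lc) = \Hr^c(X(\Rb),\Zb(L))/\Im\gamma_n^c(X,\Lc)$ having exponent $2^{d+1-n}$, which is the desired conclusion.

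I do not anticipate any real obstacle: the statement is a formal consequence of Lemma \ref{lem:image_cycle_class_four_lemma} and Corollary \ref{cor:signature_iso_of_sheaves_finite_range}, and all the analytical content has already been absorbed in the construction of the relevant commutative ladder of cohomology sequences just before the corollary. The only bookkeeping needed is to record that the kernel of $\rho(L)$ is $2^{d+1-n}\Hr^c(X(\Rb),\Zb(L))$, which follows from the long exact sequence associated with $0\rightarrow\Zb(L)\xrightarrow{2^{d+1-n}}\Zb(L)\rightarrow\Zb/2^{d+1-n}(L)\rightarrow 0$.
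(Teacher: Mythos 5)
Your proposal is correct and follows exactly the paper's own argument: the paper likewise deduces the corollary by applying Lemma \ref{lem:image_cycle_class_four_lemma} with $m=d+1$, using the fact that $\gamma_{d+1}^i(X,\Lc)$ is an isomorphism in every degree, so that $\Im\gamma_n^c(X,\Lc)=\rho(L)^{-1}\bigl(\Im\gamma_{n/d+1}^c(X,\Lc)\bigr)\supseteq\rho(L)^{-1}(0)=2^{d+1-n}\Hr^c(X(\Rb),\Zb(L))$. Your closing remark identifying $\Ker\rho(L)$ via the exactness of the bottom row of the ladder is exactly the bookkeeping the paper leaves implicit.
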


\subsection{Applications to classes of curves and surfaces}

Let again $X$ be a connected smooth $\Rb$-variety of dimension $d$ and let $\Lc$ be a line bundle on $X$, and set $L=\Lc(\Rb)$. In this subsection, we apply the results of the previous subsection to obtain exponents for the cokernel of the map $\gamma^c(X,\Lc):\Hr^c(X,\Ibf^c(\Lc))\rightarrow\Hr^c(X(\Rb),\Zb(L))$ in the cases $c=d-1$ (unconditionally on $X$) and $c=d-2$ (in a number of significant cases).

\begin{prop}\label{prop:cohomology_classes_of_curves}
The image of the morphism $\gamma^{d-1}(X,\Lc)$ is the inverse image of the subgroup $\Im\overline{\gamma}^{d-1}(X)$ of $\Hr^{d-1}(X(\Rb),\Zb/2)$ under the reduction mod $2$ homomorphism $\rho(L):\Hr^{d-1}(X(\Rb),\Zb(L))\rightarrow\Hr^{d-1}(X(\Rb),\Zb/2)$. In particular, the group $\Coker\gamma^{d-1}(X,\Lc)$ has exponent $2$.
\end{prop}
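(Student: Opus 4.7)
The plan is to apply Lemma \ref{lem:image_cycle_class_four_lemma} with parameters $c=d-1$, $n=d-1$ and $m=d$. Since $m-n=1$, the quotient $\Ibf^{d-1}(\Lc)/\Ibf^d(\Lc)$ is canonically isomorphic to $\overline{\Ibf}^{d-1}$ (the action of $\Gm$ on the quotient being trivial, there is no dependence on the twist), and the third vertical map $\gamma_{(d-1)/d}^{d-1}(X,\Lc)$ in the commutative ladder preceding Lemma \ref{lem:image_cycle_class_four_lemma} becomes $\overline{\gamma}^{d-1}(X)\colon\Hr^{d-1}(X,\overline{\Ibf}^{d-1})=\CH^{d-1}(X)/2\to\Hr^{d-1}(X(\Rb),\Zb/2)$, after identifying $\overline{\Ibf}^{d-1}\cong\Hsc^{d-1}$ via the Milnor conjecture and using Bloch's formula. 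Consequently, if the two hypotheses of Lemma \ref{lem:image_cycle_class_four_lemma} hold, its conclusion is exactly the description of $\Im\gamma^{d-1}(X,\Lc)$ asserted by the proposition.

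To set this up, I would first dispose of the case $X(\Rb)=\emptyset$: then every cohomology group of $X(\Rb)$ in sight vanishes and the assertion is trivial. Assume henceforth that $X(\Rb)$ is non-empty, so that the hypotheses of both Theorem \ref{theo:conjecture_for_closed_points} and Proposition \ref{prop:injectivity_real_cycle_class_map_closed_points} are in force. Theorem \ref{theo:conjecture_for_closed_points} yields the injectivity of $\gamma^d(X,\Lc)$ on the whole of $\Hr^d(X,\Ibf^d(\Lc))$---and therefore on the image of the connecting morphism---which is the first hypothesis of Lemma \ref{lem:image_cycle_class_four_lemma}. For the second hypothesis, the surjectivity of $\gamma_d^{d-1}(X,\Lc)$, I would factor this map as $\gamma_{d+1}^{d-1}(X,\Lc)\circ f$, where $f\colon\Hr^{d-1}(X,\Ibf^d(\Lc))\to\Hr^{d-1}(X,\Ibf^{d+1}(\Lc))$ is induced by multiplication by $\llangle -1\rrangle$ and $\gamma_{d+1}^{d-1}(X,\Lc)$ is an isomorphism by Corollary \ref{cor:signature_iso_of_sheaves_finite_range}. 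The short exact sequence $0\to\overline{\Kbf}\to\Ibf^d(\Lc)\to\Ibf^{d+1}(\Lc)\to 0$ (whose surjectivity was established at the start of Section \ref{section:closed_points}) yields an exact sequence
\begin{equation*}
\Hr^{d-1}(X,\Ibf^d(\Lc))\xrightarrow{f}\Hr^{d-1}(X,\Ibf^{d+1}(\Lc))\to\Hr^d(X,\overline{\Kbf})
\end{equation*}
whose rightmost group vanishes by Proposition \ref{prop:injectivity_real_cycle_class_map_closed_points}. Therefore $f$, and thus $\gamma_d^{d-1}(X,\Lc)$, is surjective.

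The statement about the exponent of the cokernel is then formal: the subgroup $\rho(L)^{-1}(\Im\overline{\gamma}^{d-1}(X))$ of $\Hr^{d-1}(X(\Rb),\Zb(L))$ automatically contains $\ker(\rho(L))=2\Hr^{d-1}(X(\Rb),\Zb(L))$, so $\Coker\gamma^{d-1}(X,\Lc)$ is a quotient of $\Hr^{d-1}(X(\Rb),\Zb(L))/2\Hr^{d-1}(X(\Rb),\Zb(L))$, which plainly has exponent $2$. The main obstacle in this plan is really concentrated in the verification of the surjectivity of $\gamma_d^{d-1}(X,\Lc)$: this is where the vanishing $\Hr^d(X,\overline{\Kbf})=0$ from Proposition \ref{prop:injectivity_real_cycle_class_map_closed_points}, itself relying on the transfer argument along the projection $X\times_\Rb\Spec\Cb\to X$, is doing the real work. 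Once that input is in hand, everything else is bookkeeping around the four-lemma.
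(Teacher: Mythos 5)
Your proposal is correct and follows essentially the same route as the paper's proof: the same application of Lemma \ref{lem:image_cycle_class_four_lemma} with $n=d-1$, $m=d$, $c=d-1$, the same reduction of the two hypotheses to Theorem \ref{theo:conjecture_for_closed_points} (injectivity) and to the vanishing $\Hr^d(X,\overline{\Kbf})=0$ of Proposition \ref{prop:injectivity_real_cycle_class_map_closed_points} (surjectivity of $\gamma_d^{d-1}(X,\Lc)$), and the same formal deduction of the exponent statement from $\Ker\rho(L)=2\Hr^{d-1}(X(\Rb),\Zb(L))$.
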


\begin{rema}
Recall that for every $n\geqslant 0$, modulo the natural isomorphism $\Hr^n(X,\overline{\Ibf}^n)\cong\CH^n(X)/2$, the morphism $\overline{\gamma}^n(X)$ coincides with the Borel--Haefliger cycle class map $\gamma_\BH^n(X):\CH^n(X)/2\rightarrow\Hr^n(X(\Rb),\Zb/2)$. Therefore in Proposition \ref{prop:cohomology_classes_of_curves}, the group $\Im\overline{\gamma}^{d-1}(X)$ is precisely the subgroup $\Hr_\alg^{d-1}(X(\Rb),\Zb/2)\subseteq\Hr^{d-1}(X(\Rb),\Zb/2)$ of algebraic classes mentioned in the introduction. Hence Proposition \ref{prop:cohomology_classes_of_curves} tells us that $\Im\gamma^{d-1}(X,\Lc)$ is precisely the subgroup of $\Hr^{d-1}(X(\Rb),\Zb(L))$ whose elements are those classes whose reduction mod $2$ is algebraic in the usual sense. This was also observed in \cite[Corollary 5.3]{hornbostelFewComputationsReal2024} which also contains the final statement of Proposition \ref{prop:cohomology_classes_of_curves}.
\end{rema}

\begin{proof}
Proposition \ref{prop:cohomology_classes_of_curves} is obvious if $X(\Rb)$ is empty (both groups are zero) so we may assume that $X(\Rb)\neq\emptyset$. We check that the hypotheses of Lemma \ref{lem:image_cycle_class_four_lemma} apply with $(m,n,c)=(d,d-1,d-1)$, namely for the inclusion $\Ibf^d(\Lc)\rightarrow\Ibf^{d-1}(\Lc)$ in degree $d-1$. It suffices to verify that the map $\gamma_d^{d-1}(X,\Lc):\Hr^{d-1}(X,\Ibf^d(\Lc))\rightarrow\Hr^{d-1}(X(\Rb),\Zb(L))$ is surjective, and that the map $\gamma^d(X,\Lc):\Hr^d(X,\Ibf^d(\Lc))\rightarrow\Hr^d(X(\Rb),\Zb(L))$ is injective. The injectivity statement is part of Theorem \ref{theo:conjecture_for_closed_points} since $X(\Rb)$ is non-empty by assumption. Similarly, the exact sequence (\ref{eq:exact_sequence_kernel_-1_i^d}) of sheaves determines an exact sequence \[\Hr^{d-1}(X,\Ibf^d(\Lc))\rightarrow\Hr^{d-1}(X,\Ibf^{d+1}(\Lc))\rightarrow\Hr^d(X,\overline{\Kbf})\] whose first map is identified by the isomorphism $\gamma_{d+1}^{d-1}(X,\Lc):\Hr^{d-1}(X,\Ibf^{d+1}(\Lc))\cong\Hr^{d-1}(X(\Rb),\Zb(L))$ with the map $\gamma_d^{d-1}(X,\Lc)$. The group $\Hr^d(X,\overline{\Kbf})$ vanishes by Proposition \ref{prop:injectivity_real_cycle_class_map_closed_points} again because $X(\Rb)$ is not empty. Consequently, the map $\gamma_d^{d-1}(X,\Lc)$ is surjective as required. Finally, since $0$ lies in the group $\Im\overline{\gamma}^{d-1}(X)$, the subgroup $\rho(L)^{-1}(\Im\overline{\gamma}^{d-1}(X))=\Im\gamma^{d-1}(X,\Lc)$ of $\Hr^{d-1}(X(\Rb),\Zb(L))$ contains $\Ker\rho(L)=2\Hr^{d-1}(X(\Rb),\Zb(L))$. This implies the final statement of Proposition \ref{prop:cohomology_classes_of_curves} and completes the proof.
\end{proof}

The following proposition, which provides a description of the cohomology group $\Hr^{d-1}(X,\Ibf^d(\Lc))$ in a number of meaningful cases, will also allow us to analyse the cycle class map $\gamma^{d-2}(X,\Lc)$.

\begin{prop}\label{prop:surjectivity_surfaces}
Assume that $X$ is not proper or that $X(\Rb)\neq\emptyset$, and that $\Hr_\et^{2d-1}(X\times_\Rb\Spec\Cb,\Zb/2)=0$. Then the exact sequence \emph{(\ref{eq:exact_sequence_kernel_-1_i^d})} of sheaves induces a split exact sequence 
\begin{equation}\label{eq:description_hd-1_Id}
0\rightarrow\Hr^{d-1}(X,\overline{\Kbf})\rightarrow\Hr^{d-1}(X,\Ibf^d(\Lc))\rightarrow\Hr^{d-1}(X,\Ibf^{d+1}(\Lc))\rightarrow 0
\end{equation}
of abelian groups.
\end{prop}

\begin{rema}
The proof of the above proposition (replacing $\Ibf$-cohomology by $\Hsc$-cohomology) is implicit in \cite{colliot-theleneZerocyclesCohomologyReal1996}, see in particular the (delicate) proof of \cite[Theorem 3.2 (e), Corollary 3.3 (b)]{colliot-theleneZerocyclesCohomologyReal1996}.
\end{rema}

\begin{proof}
In view of Lemma \ref{lem:kernel_as_torsion}, the ladder (\ref{eq:commutative_ladder_I_to_overline_I}) induces a commutative ladder with exact rows:
\begin{center}
\begin{tikzcd}
\Hr^{d-1}(X,\overline{\Kbf}) \arrow[r,"i"] \arrow[d,equal] & \Hr^{d-1}(X,\Ibf^d(\Lc)) \arrow[r] \arrow[d,"\alpha"] & \Hr^{d-1}(X,\Ibf^{d+1}(\Lc)) \arrow[r] \arrow[d] & \Hr^{d}(X,\overline{\Kbf}) \arrow[d,equal] \\
\Hr^{d-1}(X,\overline{\Kbf}) \arrow[r,swap,"\overline{i}"] & \Hr^{d-1}(X,\Hsc^d) \arrow[r] & \Hr^{d-1}(X,\Hsc^{d+1}) \arrow[r] & \Hr^{d}(X,\overline{\Kbf})
\end{tikzcd}
\end{center}
in which $\Hr^d(X,\overline{\Kbf})=0$ by Proposition \ref{prop:injectivity_real_cycle_class_map_closed_points}. This proves exactness of (\ref{eq:description_hd-1_Id}) at $\Hr^{d-1}(X,\Ibf^{d+1}(\Lc))$. Therefore to show the split exactness of the sequence (\ref{eq:description_hd-1_Id}), it suffices to show that the sequence 
\begin{equation}\label{eq:exact_sequence_hd-1_id-cohomology}
0\rightarrow\Hr^{d-1}(X,\overline{\Kbf})\xrightarrow{\overline{i}}\Hr^{d-1}(X,\Hsc^d)\rightarrow\Hr^{d-1}(X,\Hsc^{d+1})\rightarrow 0
\end{equation}
is split exact. Indeed, if $q:\Hr^{d-1}(X,\Hsc^d)\rightarrow\Hr^{d-1}(X,\overline{\Kbf})$ is a retraction of the map $\overline{i}$, so that $q\circ\overline{i}=\Id$, then the composite $\Hr^{d-1}(X,\Ibf^d(\Lc))\xrightarrow{\alpha}\Hr^{d-1}(X,\Hsc^d)\xrightarrow{q}\Hr^{d-1}(X,\overline{\Kbf})$ is a retraction of the map $i$.

Now let us prove that the sequence (\ref{eq:exact_sequence_hd-1_id-cohomology}) is split exact. Since the groups in this sequence are $\Zb/2$-vector spaces (by Remark \ref{rema:exponents_cohomology_groups}), if it is an exact sequence, then it automatically splits so we only have to prove that $\overline{i}$ is injective. This morphism fits in a cohomology exact sequence \[\Hr^{d-2}(X,\Hsc^d)\rightarrow\Hr^{d-2}(X,\Hsc^{d+1})\rightarrow\Hr^{d-1}(X,\overline{\Kbf})\xrightarrow{\overline{i}}\Hr^{d-1}(X,\Hsc^d)\] which implies that $\Ker\overline{i}$ is isomorphic to the cokernel of the homomorphism $\Hr^{d-2}(X,\Hsc^d)\rightarrow\Hr^{d-2}(X,\Hsc^{d+1})$. By \cite[Corollary 3.3 (b)]{colliot-theleneZerocyclesCohomologyReal1996}, this homomorphism is surjective under the hypotheses of Proposition \ref{prop:surjectivity_surfaces}, hence the map $\overline{i}$ is injective which completes the proof.
\end{proof}

\begin{cor}\label{cor:conjecture_for_surfaces}
Assume that the group $\Hr_\et^{2d-1}(X\times_\Rb\Spec\Cb,\Zb/2)$ vanishes. Then the morphism $\gamma_{d}^{d-2}(X,\Lc)$ is surjective. Consequently, the cokernel of the map $\gamma^{d-2}(X,\Lc)$ has exponent $4$.
\end{cor}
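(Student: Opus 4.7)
The plan is to reduce the surjectivity assertion to Proposition \ref{prop:surjectivity_surfaces} via the long exact sequence associated with the short exact sequence
\[0\to\overline{\Kbf}\to\Ibf^d(\Lc)\xrightarrow{\otimes\llangle -1\rrangle}\Ibf^{d+1}(\Lc)\to 0\]
of Lemma \ref{lem:kernel_as_torsion}, and then to deduce the exponent bound from Lemma \ref{lem:descending_exponents}.

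First, one should dispose of the degenerate case where $X$ is proper with $X(\Rb)=\emptyset$: then $\Hr^{d-2}(X(\Rb),\Zb(L))=0$ and both claims are trivial. Henceforth assume $X$ is non-proper or $X(\Rb)\neq\emptyset$, so that Proposition \ref{prop:surjectivity_surfaces} applies under our hypothesis $\Hr^{2d-1}_{\et}(X',\Zb/2)=0$. The key consequence to extract from that proposition is that the connecting-type map $\Hr^{d-1}(X,\overline{\Kbf})\to\Hr^{d-1}(X,\Ibf^d(\Lc))$ is injective (this is part of the split exactness).

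Next, I would write down the long exact sequence in cohomology of the above short exact sequence in degrees $d-2$ and $d-1$:
\[\Hr^{d-2}(X,\Ibf^d(\Lc))\xrightarrow{f}\Hr^{d-2}(X,\Ibf^{d+1}(\Lc))\xrightarrow{\partial}\Hr^{d-1}(X,\overline{\Kbf})\hookrightarrow\Hr^{d-1}(X,\Ibf^d(\Lc)),\]
where the last arrow is injective by the previous step. Hence $\partial=0$ and $f$ is surjective. By construction (see the beginning of Section \ref{section:closed_points}, but in degree $d-2$ instead of $d$), the map $\gamma_d^{d-2}(X,\Lc)$ factors as $\gamma_{d+1}^{d-2}(X,\Lc)\circ f$ where the second factor is an isomorphism by Corollary \ref{cor:signature_iso_of_sheaves_finite_range}. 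Therefore $\gamma_d^{d-2}(X,\Lc)$ is surjective, which proves the first assertion.

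Finally, applying Lemma \ref{lem:descending_exponents} with $c=n=d-2$ and $m=d$, the surjectivity of $\gamma_d^{d-2}(X,\Lc)$ just established yields that $\Coker\gamma^{d-2}(X,\Lc)$ has exponent $2^{m-n}=4$, as required. The only delicate point in the whole argument is the injectivity of $\Hr^{d-1}(X,\overline{\Kbf})\to\Hr^{d-1}(X,\Ibf^d(\Lc))$, but this is precisely what Proposition \ref{prop:surjectivity_surfaces} provides; everything else is formal manipulation of long exact sequences and the two cited lemmas.
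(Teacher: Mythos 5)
Your proof is correct and follows essentially the same route as the paper: reduce to the injectivity of $\Hr^{d-1}(X,\overline{\Kbf})\to\Hr^{d-1}(X,\Ibf^d(\Lc))$ supplied by Proposition \ref{prop:surjectivity_surfaces}, read off the surjectivity of $f$ (hence of $\gamma_d^{d-2}(X,\Lc)$ via the isomorphism $\gamma_{d+1}^{d-2}(X,\Lc)$) from the long exact sequence, and conclude with Lemma \ref{lem:descending_exponents}. The only cosmetic difference is the case split (the paper dismisses the case $X(\Rb)=\emptyset$ outright, while you dismiss the proper-and-empty case), and both versions are valid.
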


\begin{proof}
Note that if $X(\Rb)$ is empty, the conclusions of Corollary \ref{cor:conjecture_for_surfaces} are obvious so we may assume that $X(\Rb)\neq\emptyset$ which we now do. The morphism $\otimes\llangle -1\rrangle:\Ibf^d(\Lc)\rightarrow\Ibf^{d+1}(\Lc)$ induces a homomorphism $f:\Hr^{d-2}(X,\Ibf^d(\Lc))\rightarrow\Hr^{d-2}(X,\Ibf^{d+1}(\Lc))$ fitting in the cohomology exact sequence \[\Hr^{d-2}(X,\Ibf^d(\Lc))\xrightarrow{f}\Hr^{d-2}(X,\Ibf^{d+1}(\Lc))\rightarrow\Hr^{d-1}(X,\overline{\Kbf})\xrightarrow{i}\Hr^{d-1}(X,\Ibf^d(\Lc))\] induced by the short exact sequence (\ref{eq:exact_sequence_kernel_-1_i^d}) of sheaves. Consequently, the cokernel of the morphism $f$, which is identified by the isomorphism $\gamma_{d+1}^{d-2}(X,\Lc)$ with $\gamma_d^{d-2}(X,\Lc)$, is isomorphic to the kernel of $i$. Since $\Hr_\et^{2d-1}(X\times_\Rb\Spec\Cb,\Zb/2)=0$ and $X(\Rb)\neq\emptyset$, Proposition \ref{prop:surjectivity_surfaces} implies that $i$ is injective. Therefore $\Coker f\simeq\Coker\gamma_d^{d-2}(X,\Lc)$ vanishes so $\gamma_d^{d-2}(X,\Lc)$ is surjective. By Lemma \ref{lem:descending_exponents} with $(m,n,c)=(d,d-2,d-2)$, the group $\Coker\gamma^{d-2}(X,\Lc)$ then has exponent $2^{d-(d-2)}=4$ as required.
\end{proof}

\section{The quadratic real cycle class map for connected components}\label{section:connected_components}

Let again $X$ be a smooth real algebraic variety of dimension $d$ and let $\Lc\in\underline{\Pic}(X)$; we set $L=\Lc(\Rb)$. Using the method of the previous section, we can also analyse the case of connected components, namely the map \[\gamma^0(X,\Lc):\Hr^0(X,\Wbf(\Lc))\rightarrow\Hr^0(X(\Rb),\Zb(L)).\] Denote by $\Cc$ the set of connected components of $X(\Rb)$ and by $\Cc(L)$ the subset of $\Cc$ whose elements are those connected components $V$ of $X(\Rb)$ such that $L_{|V}$ is trivial. Note that $\Cc(L)=\Cc$ if, and only if, the locally free sheaf $L$ is globally free. Moreover, the group $\Hr^0(X(\Rb),\Zb)$ is the free abelian group on the set $\Cc$, and the abelian group $\Hr^0(X(\Rb),\Zb(L))$ is freely generated by $\Cc(L)$. This is a consequence of the following computation of sections of locally constant sheaves on $X(\Rb)$ (see in particular \cite[Proposition 2.58]{hornbostelRealCycleClass2021}): given $V\in\Cc$, we have \[
\Hr^0(V,\Zb(L))=\left\{
\begin{array}{ll}
\Zb & \text{if}\;L_{|V}\;\text{is\;trivial} \\
0   & \text{else}.
\end{array}\right.
\] We then have the following theorem of Krasnov and van Hamel (independently).

\begin{theo}[Krasnov, van Hamel]\label{theo:van_hamel}
The map $\overline{\gamma}_d^0(X):\Hr^0(X,\overline{\Ibf}^d)\rightarrow\Hr^0(X(\Rb),\Zb/2)$ is surjective.
\end{theo}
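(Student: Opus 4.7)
The plan is to reduce the surjectivity statement to exhibiting, for each connected component $V\in\Cc$ of $X(\Rb)$, a global section of $\overline{\Ibf}^d$ whose image under $\overline{\gamma}_d^0(X)$ is the characteristic function $1_V\in\Hr^0(X(\Rb),\Zb/2)=\bigoplus_{V\in\Cc}\Zb/2$. Once such classes are produced, they span the target and surjectivity follows.

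First, I would use the isomorphism $\overline{\Ibf}^d\cong\Hsc^d$ provided by the Milnor conjecture and the Rost--Schmid resolution to identify $\Hr^0(X,\overline{\Ibf}^d)$ with the group of \emph{unramified} classes in $\Hr^d_\et(K,\Zb/2)$, where $K=\kappa(X)$ is the function field of $X$. Under this identification, and modulo the identification of $\overline{\gamma}_d^0(X)$ with the map induced by the signature mod $2$ morphism $h_d:\Hsc^d\rightarrow\iota_*\Zb/2$, the homomorphism $\overline{\gamma}_d^0(X)$ sends an unramified class $\alpha$ to the function $p\mapsto p^*\alpha\in\Hr^d_\et(\Rb,\Zb/2)=\Zb/2$, where $p^*$ is pullback along the real place of $K$ determined by the real point $p$.

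The sought classes can then be produced via $d$-fold Pfister symbols: it suffices to find for each $V$ elements $a_1,\ldots,a_d\in K^\times$ such that the cup-product $(a_1)\cup\cdots\cup(a_d)\in\Hr^d_\et(K,\Zb/2)$ is unramified on $X$ and such that $a_i(p)<0$ for every $i$ if and only if $p\in V$, for every real point $p$ of $X$ at which the $a_i$ are defined and nonzero. Indeed, the signature of the associated Pfister form $\llangle a_1,\ldots,a_d\rrangle$ at such a real point $p$ then equals $2^d\cdot 1_V(p)$, so that dividing by $2^d$ realises $1_V$ under $\overline{\gamma}_d^0(X)$.

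The main obstacle, and the heart of the proofs given by Krasnov and van Hamel, is precisely the real-algebraic existence of such $a_1,\ldots,a_d$. It rests on Bröcker's bound on the stability index of a real algebraic variety of dimension $d$: every connected component of $X(\Rb)$, being a basic clopen semi-algebraic subset, can be cut out as the locus where $d$ rational functions on $X$ are simultaneously negative. A purity argument of the kind underlying Bloch--Ogus then enables one to arrange the resulting Pfister symbol to be unramified on $X$, at the possible cost of modifying the representatives $a_i$ on a codimension one subset. Given this real-algebraic input, the preceding identifications immediately yield the surjectivity claimed.
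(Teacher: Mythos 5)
Your overall strategy---producing, for each connected component $V$ of $X(\Rb)$, an unramified $d$-fold symbol over the function field $K$ whose mod $2$ signature is $1_V$---is a legitimate classical route to this statement (it is essentially the real-algebraic approach of Colliot-Th\'el\`ene--Parimala via Br\"ocker--Scheiderer's stability index), and it is genuinely different from the argument the paper records. The paper proves the stronger assertion that the composite $\beta:\Hr_\et^d(X,\Zb/2)\rightarrow\Hr^0(X,\Hsc^d)\rightarrow\Hr^0(X(\Rb),\Zb/2)$ is already surjective, either via van Hamel's reduction to Krasnov's equivariant computation or via Benoist--Wittenberg's argument: Cox's isomorphism $\Hr_G^d(X(\Cb),\Zb/2)\simeq\Hr_\et^d(X,\Zb/2)$ lets one take as preimage of $1_V$ the \emph{equivariant cycle class} of the component $V$ itself. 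That route uses no real algebra, only equivariant topology; note in particular that the actual proofs of Krasnov and van Hamel are of this equivariant nature, not the real-algebraic kind you attribute to them. Your route stays on the algebraic side of the map but must then pay for it at the unramifiedness step.

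That is where your proposal has a genuine gap. The symbol $(a_1)\cup\cdots\cup(a_d)$ attached to an arbitrary description $V=\{a_1<0,\ldots,a_d<0\}$ is in general \emph{not} unramified, and no ``purity argument of the kind underlying Bloch--Ogus'' repairs this. Already for $X=\Ab^1$, $d=1$ and $V=X(\Rb)$, the choice $a=-(t^2+1)$ satisfies $\{a<0\}=X(\Rb)$, yet $(a)$ has nonzero residue at the closed point $t^2+1=0$; the correct representative is $a=-1$. The signs of the $a_i$ on $X(\Rb)$ determine them only modulo totally positive elements, i.e.\ modulo sums of squares, which are not squares; so there is genuine content in the assertion that \emph{some} choice yields a class unramified at every point of $X^{(1)}$---in particular at points $y$ with no real places, where the residue lands in $\Hr^{d-1}(\kappa(y),\Zb/2)$ with $\kappa(y)$ of transcendence degree $d-1$, a degree in which classes are not detected by orderings and hence whose vanishing cannot be read off from the behaviour of the $a_i$ on $X(\Rb)$. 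Purity (exactness of the Gersten complex) identifies $\Hr^0(X,\Hsc^d)$ with the unramified subgroup of $\Hr^d(K,\Zb/2)$, which you use correctly at the outset, but it provides no mechanism for converting a ramified symbol into an unramified one with the same signatures. Closing this gap requires either the finer real-algebraic input (fans and the residue analysis on the real spectrum underlying the Colliot-Th\'el\`ene--Parimala treatment), or switching to the equivariant argument the paper actually invokes.
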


In fact, modulo the isomorphism $\Hsc^d\cong\overline{\Ibf}^d$ provided by the affirmation of the Milnor conjecture, the morphism $\overline{\gamma}_d^0(X)$ was constructed by Colliot-Thélène and Parimala in \cite[Section 2.1]{colliot-theleneRealComponentsAlgebraic1990}; denote it by $h_d^0$. By definition of $\Hsc^d$, there is a sheafification map $\Hr_\et^d(X,\Zb/2)\rightarrow\Hr^0(X,\Hsc^d)$ and the morphism $\beta:\Hr_\et^d(X,\Zb/2)\rightarrow\Hr^0(X(\Rb),\Zb/2)$ of \cite[Theorem 2.8]{hamelTorsionZerocyclesAbelJacobi2000} factors as
\begin{center}
\begin{tikzcd}
\Hr_\et^d(X,\Zb/2) \arrow[r] \arrow[rd,swap,"\beta"] & \Hr^0(X,\Hsc^d) \arrow[d,"h_d^0"] \\
                                       & \Hr^0(X(\Rb),\Zb/2)
\end{tikzcd}
\end{center}
(see in particular \cite[(7.19.1)]{scheidererRealEtaleCohomology1994} for a comparison of van Hamel's map and the signature map $h_d^0$ of Colliot-Thélène and Parimala). Then van Hamel observes in \cite[Theorem 2.8]{hamelTorsionZerocyclesAbelJacobi2000} that the stronger claim that the map $\beta:\Hr_\et^d(X,\Zb/2)\rightarrow\Hr^0(X(\Rb),\Zb/2)$ is surjective follows from \cite[Corollary 3.2]{krasnovEQUIVARIANTGROTHENDIECKCOHOMOLOGY1995a}, whose proof works for arbitrary $X$ smooth and connected although it is only stated for $X$ projective in Krasnov's work. Another argument is given in \cite[Remark 1.6 (ii)]{benoistIntegralHodgeConjecture2020}, that reads as follows. Every connected component $V$ of $X(\Rb)$ has an equivariant fundamental class $[V]_G$ in $\Hr_G^d(X(\Cb),\Zb/2)$ which can be regarded as an element $s_V$ of $\Hr_\et^d(X,\Zb/2)$ using Cox's isomorphism $\Hr_G^*(X(\Cb),\Zb/2)\simeq\Hr_\et^*(X,\Zb/2)$ \cite{coxEtaleHomotopyType1979}. The image of $s_V$ under $\beta$ is then precisely the generator of $\Hr^0(X(\Rb),\Zb/2)=\bigoplus_{W\in\mathcal{C}}\Zb/2\cdot[W]$ determined by $V\in\mathcal{C}$. In particular, the image of $\beta$, which is a subgroup of $\Hr^0(X(\Rb),\Zb/2)$, must then be equal to $\Hr^0(X(\Rb),\Zb/2)$.

\begin{cor}\label{cor:connected_components_Id_cohomology}
The map $\gamma_d^0(X,\Lc):\Hr^0(X,\Ibf^d(\Lc))\rightarrow\Hr^0(X(\Rb),\Zb(L))$ is surjective.
\end{cor}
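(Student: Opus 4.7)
The plan is to apply the four-lemma (Lemma \ref{lem:four_lemma}) to the commutative ladder
\begin{center}
\begin{tikzcd}[column sep=small]
\Hr^0(X,\Ibf^{d+1}(\Lc)) \arrow[r] \arrow[d,"\protect{\gamma_{d+1}^0(X,\Lc)}"'] & \Hr^0(X,\Ibf^d(\Lc)) \arrow[r] \arrow[d,"\protect{\gamma_{d}^0(X,\Lc)}"'] & \Hr^0(X,\overline{\Ibf}^d) \arrow[r] \arrow[d,"\protect{\overline{\gamma}_d^0(X)}"'] & \Hr^1(X,\Ibf^{d+1}(\Lc)) \arrow[d,"\protect{\gamma_{d+1}^1(X,\Lc)}"] \\
\Hr^0(X(\Rb),\Zb(L)) \arrow[r,"2"'] & \Hr^0(X(\Rb),\Zb(L)) \arrow[r,"\rho(L)"'] & \Hr^0(X(\Rb),\Zb/2) \arrow[r] & \Hr^1(X(\Rb),\Zb(L))
\end{tikzcd}
\end{center}
obtained by combining the short exact sequence $0\rightarrow\Ibf^{d+1}(\Lc)\rightarrow\Ibf^d(\Lc)\rightarrow\overline{\Ibf}^d\rightarrow 0$ of sheaves on $X$ with its topological counterpart $0\rightarrow\Zb(L)\xrightarrow{2}\Zb(L)\rightarrow\Zb/2\rightarrow 0$, as already set up in the paragraph on the real cycle class map for $\Ibf$-cohomology.

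First I would verify the hypotheses of Lemma \ref{lem:four_lemma}. By Corollary \ref{cor:signature_iso_of_sheaves_finite_range}, the twisted signature map $\sign_{d+1}(\Lc):\Ibf^{d+1}(\Lc)\rightarrow\iota_*\Zb(\Lc)$ is an isomorphism of sheaves on $X$; combined with the comparison isomorphism of Proposition \ref{prop:locally_constant_cohomology} this shows that $\gamma_{d+1}^n(X,\Lc)$ is an isomorphism for every $n\geqslant 0$. In particular the leftmost vertical map $\gamma_{d+1}^0(X,\Lc)$ is surjective and the rightmost vertical map $\gamma_{d+1}^1(X,\Lc)$ is injective (hence \emph{a fortiori} injective on the image of the connecting morphism $\Hr^0(X,\overline{\Ibf}^d)\rightarrow\Hr^1(X,\Ibf^{d+1}(\Lc))$).

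Lemma \ref{lem:four_lemma} then yields $\Im\gamma_d^0(X,\Lc)=\rho(L)^{-1}(\Im\overline{\gamma}_d^0(X))$. By the Krasnov--van Hamel Theorem \ref{theo:van_hamel}, $\overline{\gamma}_d^0(X)$ is surjective onto $\Hr^0(X(\Rb),\Zb/2)$, so $\Im\gamma_d^0(X,\Lc)=\rho(L)^{-1}(\Hr^0(X(\Rb),\Zb/2))=\Hr^0(X(\Rb),\Zb(L))$, completing the proof.

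There is no real obstacle beyond organising the diagram correctly: the theorem is essentially a direct consequence of Theorem \ref{theo:van_hamel} once one controls the obstruction to lifting a mod $2$ class, an obstruction which lives in $\Hr^1(X,\Ibf^{d+1}(\Lc))$ and vanishes because $\gamma_{d+1}^1(X,\Lc)$ is an isomorphism and the corresponding topological obstruction vanishes.
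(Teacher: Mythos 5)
Your proof is correct and follows essentially the same route as the paper: the paper applies its Lemma \ref{lem:image_cycle_class_four_lemma} (itself an instance of the four-lemma) to the same ladder, with the isomorphism of $\gamma_{d+1}^i(X,\Lc)$ for $i=0,1$ supplied by Jacobson's theorem and the surjectivity of $\overline{\gamma}_d^0(X)$ supplied by Theorem \ref{theo:van_hamel}. No substantive difference.
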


\begin{proof}
Consider the following commutative ladder with exact rows:
\begin{center}
\begin{tikzcd}
\Hr^0(X,\Ibf^{d+1}(\Lc)) \arrow[r] \arrow[d,"\protect{\gamma_{d+1}^0(X,\Lc)}"] & \Hr^0(X,\Ibf^d(\Lc)) \arrow[r] \arrow[d,"\protect{\gamma_d^0(X,\Lc)}"] & \Hr^0(X,\overline{\Ibf}^d) \arrow[r] \arrow[d,"\protect{\overline{\gamma}_d^0(X)}"] & \Hr^1(X,\Ibf^{d+1}(\Lc)) \arrow[d,"\protect{\gamma_{d+1}^1(X,\Lc)}"] \\
\Hr^0(X(\Rb),\Zb(L)) \arrow[r,swap,"2"] & \Hr^0(X(\Rb),\Zb(L)) \arrow[r,swap,"\protect{\rho(L)}"] & \Hr^0(X(\Rb),\Zb/2) \arrow[r] & \Hr^1(X(\Rb),\Zb(L))
\end{tikzcd}
\end{center}
induced by the inclusion $\Ibf^{d+1}(\Lc)\rightarrow\Ibf^d(\Lc)$. The outer vertical maps $\gamma_{d+1}^0(X,\Lc)$ and $\gamma_{d+1}^1(X,\Lc)$ are isomorphisms by Theorem \ref{theo:twisted_signature_iso of sheaves}. By Lemma \ref{lem:image_cycle_class_four_lemma} with $(m,n,c)=(d+1,d,0)$, the image of $\gamma_d^0(X,\Lc)$ is then the subgroup $\rho(L)^{-1}(\Im\overline{\gamma}_d^0(X))$ of $\Hr^0(X(\Rb),\Zb(L))$. Since $\overline{\gamma}_d^0(X)$ is surjective by Theorem \ref{theo:van_hamel}, it follows that $\gamma_d^0(X,\Lc)$ is also surjective as required.
\end{proof}

\begin{prop}\label{prop:conjecture_untwisted_conn_comp}
The cokernel of $\gamma^0(X,\Lc):\Hr^0(X,\Wbf(\Lc))\rightarrow\Hr^0(X(\Rb),\Zb(L))$ has exponent $2^d$.
\end{prop}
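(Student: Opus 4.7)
The plan is to combine Corollary \ref{cor:connected_components_Id_cohomology} with Lemma \ref{lem:descending_exponents}, since those two results together immediately yield the claim. Concretely, I would apply Lemma \ref{lem:descending_exponents} with $c=0$, $n=0$ and $m=d$, noting that $\gamma^0(X,\Lc)=\gamma_0^0(X,\Lc)$ by definition (the sheaf $\Ibf^0(\Lc)$ being $\Wbf(\Lc)$). This lemma tells us that
\[
2^{d}\Im\gamma_d^0(X,\Lc)\subseteq\Im\gamma_0^0(X,\Lc)=\Im\gamma^0(X,\Lc)\subseteq\Hr^0(X(\Rb),\Zb(L)).
\]
By Corollary \ref{cor:connected_components_Id_cohomology}, the map $\gamma_d^0(X,\Lc)$ is surjective, so $\Im\gamma_d^0(X,\Lc)=\Hr^0(X(\Rb),\Zb(L))$. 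The above inclusion then reads $2^d\Hr^0(X(\Rb),\Zb(L))\subseteq\Im\gamma^0(X,\Lc)$, which, by the elementary Example \ref{exe:link_exponent_size}, is equivalent to the statement that $\Coker\gamma^0(X,\Lc)$ has exponent $2^d$.

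There is really no obstacle here: the content has been packaged into the two previously established results, the surjectivity of $\gamma_d^0(X,\Lc)$ (which itself rests on the theorem of Krasnov--van Hamel and the four-lemma comparison diagram \eqref{eq:commutative_ladder_conn_comp}) and the general exponent-descending mechanism of Lemma \ref{lem:descending_exponents} (which comes from the factorisation of $\gamma_n^c$ through $\gamma_m^c$ combined with the fact that $\Ibf^m(\Lc)\hookrightarrow\Ibf^n(\Lc)$ intertwines the signature maps with multiplication by $2^{m-n}$ on $\iota_*\Zb(\Lc)$). The proof is therefore a one-line deduction, and this is essentially why the case $c=0$ corroborates the exponent $2^{d-c}=2^d$ predicted by the refined conjecture of the introduction.
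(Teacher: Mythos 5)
Your proof is correct and is exactly the paper's argument: the paper also deduces the statement by combining the surjectivity of $\gamma_d^0(X,\Lc)$ from Corollary \ref{cor:connected_components_Id_cohomology} with Lemma \ref{lem:descending_exponents} applied with $m=d$, $n=0$, $c=0$. Nothing to add.
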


\begin{proof}
The map $\gamma_d^0(X,\Lc)$ is surjective by Corollary \ref{cor:connected_components_Id_cohomology}. By Lemma \ref{lem:descending_exponents} with $(m,n,c)=(d,0,0)$, the group $\Coker\gamma^0(X,\Lc)$ then has exponent $2^{d-0}$.
\end{proof}

\begin{rema}
Let $V$ be a smooth real surface. According to Proposition \ref{prop:conjecture_untwisted_conn_comp}, the image of $\gamma^0(V)$ contains $4\Hr^0(V(\Rb),\Zb)$. By \cite[Corollary 7.2]{balmerGerstenWittSpectral2002}, the sheafification map $\Wr(V)\rightarrow\Wbf(V)$ is an isomorphism modulo which we may view $\gamma^0(V)$ as the global signature homomorphism $\Wr(V)\rightarrow\Hr^0(V(\Rb),\Zb)$. With the notations of \cite[Section 4]{karoubiWittGroupReal2020}, the image of this homomorphism is $S_a$ and by \cite[Remark 4.5.1]{karoubiWittGroupReal2020} and Proposition \ref{prop:conjecture_untwisted_conn_comp}, it agrees with the image $S_t$ of the topological signature defined there.
\end{rema}

We conclude by noting that over curves, we can improve Proposition \ref{prop:conjecture_untwisted_conn_comp} by computing the image of $\gamma^0(X,\Lc)$ using Proposition \ref{prop:cohomology_classes_of_curves}. Suppose until the end of the present section \ref{section:connected_components} that $d=1$, so that $X$ is a smooth real algebraic curve. Let $\rho(L):\Hr^0(X(\Rb),\Zb(L))\rightarrow\Hr^0(X(\Rb),\Zb/2)$ denote the reduction mod $2$ of the coefficients. As usual, we simply write $\rho$ if $L$ is trivial and we note that in general, the homomorphism $\rho(L)$ is the restriction of $\rho$ to $\Hr^0(X(\Rb),\Zb(L))\subseteq\Hr^0(X(\Rb),\Zb)$. Let $\Gamma(L)$ be the subgroup of $\Hr^0(X(\Rb),\Zb(L))$ defined as follows. If $L$ is trivial, then set $\Gamma(L)=\Gamma=2\Hr^0(X(\Rb),\Zb)+\Zb\cdot\varepsilon$ where $\varepsilon$ is the diagonal element defined as the tuple $(n_V)_{V\in\Cc}$ with $n_V=1$ for every $V\in\Cc$. If $L$ is non-trivial, then set $\Gamma(L)=2\Hr^0(X(\Rb),\Zb(L))$.

\begin{lem}\label{lem:maximal_signature_as_inverse_image}
The group $\Gamma(L)$ is the inverse image under the reduction map $\rho(L)$ of the subgroup $G=\Zb/2\cdot\overline{\varepsilon}$ of $\Hr^0(X(\Rb),\Zb/2)$ generated by $\overline{\varepsilon}=\rho(\varepsilon)$.
\end{lem}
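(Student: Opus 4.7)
The plan is to verify the equality of subgroups by a direct case analysis on whether the line bundle $L$ is trivial, unpacking in each case the explicit description of $\Hr^0(X(\Rb),\Zb(L))$ as the free abelian group on $\Cc(L)$ recalled at the beginning of Section \ref{section:connected_components} and computing $\rho(L)^{-1}(G)$ by hand. The argument is essentially book-keeping with generators of direct sums, and I do not expect any genuine obstacle.

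First, I would treat the case when $L$ is trivial. Here $\Cc(L)=\Cc$, so that $\rho(L)=\rho$ identifies with the componentwise reduction map $\Zb^{\Cc}\rightarrow(\Zb/2)^{\Cc}$. Its kernel is $2\Hr^0(X(\Rb),\Zb)$ and by construction $\rho(\varepsilon)=\overline{\varepsilon}$. Since $G=\{0,\overline{\varepsilon}\}$, one computes $\rho^{-1}(G)=\ker\rho+\Zb\varepsilon=2\Hr^0(X(\Rb),\Zb)+\Zb\varepsilon=\Gamma$, as desired.

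Next, I would handle the case when $L$ is non-trivial. Then $\Cc(L)\subsetneq\Cc$, so I may pick a connected component $V_0\in\Cc\setminus\Cc(L)$. Every element of $\Hr^0(X(\Rb),\Zb(L))=\bigoplus_{V\in\Cc(L)}\Zb$, viewed as a subgroup of $\Hr^0(X(\Rb),\Zb)=\bigoplus_{V\in\Cc}\Zb$ via $\rho(L)$, has component $0$ on $V_0$; therefore its reduction mod $2$ also has component $0$ on $V_0$. Since $\overline{\varepsilon}=(1)_{V\in\Cc}$ has value $1$ on every component (including $V_0$), the element $\overline{\varepsilon}$ is not in the image of $\rho(L)$. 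Consequently $\rho(L)^{-1}(G)=\rho(L)^{-1}(\{0\})=\ker\rho(L)=2\Hr^0(X(\Rb),\Zb(L))=\Gamma(L)$, completing the proof.

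The only subtle point to keep straight is that, in the non-trivial case, $\rho(L)$ is not a surjection onto $(\Zb/2)^{\Cc(L)}$ but a map into the larger group $(\Zb/2)^{\Cc}$; this is precisely what forces $\overline{\varepsilon}\notin\Im\rho(L)$ and makes the second case so clean.
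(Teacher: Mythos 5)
Your proof is correct and follows essentially the same route as the paper: the trivial case is identical, and in the non-trivial case the paper likewise picks a component $V\in\Cc\setminus\Cc(L)$ and uses projection onto that summand to show $\overline{\varepsilon}\notin\Im\rho(L)$, hence $\rho(L)^{-1}(G)=\Ker\rho(L)=2\Hr^0(X(\Rb),\Zb(L))$. (One cosmetic slip: $\Hr^0(X(\Rb),\Zb(L))$ sits inside $\Hr^0(X(\Rb),\Zb)$ via the inclusion of summands, not ``via $\rho(L)$'', which is the mod~$2$ reduction.)
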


\begin{proof}
Assume that $L$ is trivial. Since $G$ is generated by $\overline{\varepsilon}$ and $\rho(\varepsilon)=\overline{\varepsilon}$, it follows that $\rho^{-1}(G)=\Ker\rho+\Zb\cdot\varepsilon$, where $\Ker\rho=2\Hr^0(X(\Rb),\Zb)$. This completes the proof in this case.

Now assume that $L$ is non-trivial (observe that this assumption implies that $X(\Rb)$ is non-empty). Let $V\in\Cc\setminus\Cc(L)$ be a connected component of $X(\Rb)$ such that $L_{|V}$ is non-trivial. Projection onto the summand corresponding to $V$, whose generator we denote by $[V]$, induces a commutative square
\begin{center}
\begin{tikzcd}
\Hr^0(X(\Rb),\Zb) \arrow[d,swap,"\protect{\pi_V}"] \arrow[r,"\rho"] & \Hr^0(X(\Rb),\Zb/2) \arrow[d,"\protect{\overline{\pi}_V}"]\\
\Zb\cdot[V] \arrow[r,"\protect{\rho_{|\Zb\cdot[V]}}"]                     & \Zb/2\cdot[V]
\end{tikzcd}
\end{center}
We note that by choice of $V$, the group $\Hr^0(X(\Rb),\Zb(L))$ is contained in $\Ker\pi_V$ hence $\rho(\Hr^0(X(\Rb),\Zb(L)))=\Im\rho(L)$ is contained in $\Ker\overline{\pi}_V$. By definition of $\overline{\varepsilon}$, we have $\overline{\pi}_V(\overline{\varepsilon})=[V]$, thus $\overline{\varepsilon}$ does \emph{not} lie in $\Ker\overline{\pi}_V$. Consequently, the inclusion $\Im\rho(L)\subseteq\Ker\overline{\pi}_V$ implies that $\Im\rho(L)\cap G=\{0\}$. Therefore, to prove that $\Gamma(L)=\rho(L)^{-1}(G)$, it suffices to show the equality $\Gamma(L)=\rho(L)^{-1}(\{0\})$. The exact sequence \[\Hr^0(X(\Rb),\Zb(L))\xrightarrow{2}\Hr^0(X(\Rb),\Zb(L))\xrightarrow{\rho(L)}\Hr^0(X(\Rb),\Zb/2)\] implies that $\rho(L)^{-1}(\{0\})=2\Hr^0(X(\Rb),\Zb(L))$, hence $\rho(L)^{-1}(\{0\})=\Gamma(L)$ by definition of $\Gamma(L)$ as required.
\end{proof}

\begin{prop}\label{prop:image_signature_map}
We have an equality $\Im\gamma^0(X,\Lc)=\Gamma(L)$ of subgroups of $\Hr^0(X(\Rb),\Zb(L))$.
\end{prop}

\begin{proof}
By Proposition \ref{prop:cohomology_classes_of_curves} with $d=1$, the image of $\gamma^0(X,\Lc)$ is the group $\rho(L)^{-1}(\Im\overline{\gamma}^0(X))$. By Lemma \ref{lem:maximal_signature_as_inverse_image}, and using its notations, it now suffices to prove that $\Im\overline{\gamma}^0(X)=\Zb/2\cdot\overline{\varepsilon}$. Under the isomorphism $\Hr^0(X,\overline{\Wbf})\cong\CH^0(X)/2$, the map $\overline{\gamma}^0(X)$ is the Borel--Haefliger cycle class map $\gamma_\BH^0(X):\CH^0(X)/2\rightarrow\Hr^0(X(\Rb),\Zb/2)$. The image of this map is generated by the fundamental class of $X(\Rb)$ in $\Hr^0(X(\Rb),\Zb/2)$ which is precisely $\overline{\varepsilon}$.
\end{proof}

\section{A refined conjecture on the image of the quadratic real cycle class map}

As indicated in the introduction, it is not reasonable to expect the map $\gamma^c(X,\Lc)$ to be surjective in general as the following basic obstruction indicates.

\begin{exe}\label{exe:connected_components_of_curves}
Let $X$ be a connected smooth variety over $\Rb$ of dimension $\leqslant 3$. By \cite[Corollary 7.2]{balmerGerstenWittSpectral2002}, the map $\Wr(X)\rightarrow\Hr^0(X,\Wbf)$ induced by sheafification is then an isomorphism. It follows that $\gamma^0(X)$ and the global signature map $\sign:\Wr(X)\rightarrow\Hr^0(X(\Rb),\Zb)$ have the same image. We view the elements of $\Hr^0(X(\Rb),\Zb)$ as families of integers indexed by the connected components of $X(\Rb)$. If $b\in\Wr(X)$, then since the rank and signature of a symmetric bilinear form over $X$ are equal mod $2$, the integers appearing in the tuple $\sign(b)$ are either all even or all odd. This condition defines a subgroup $\Gamma$ of $\Hr^0(X(\Rb),\Zb)$ which is strict if $X(\Rb)$ has at least two connected components. In fact, \cite[Theorem (10.4) iv)]{knebuschAlgebraicCurvesReal1976a} states that the image of the signature map is precisely $\Gamma$ if the dimension $d$ of $X$ is $1$. This is precisely Proposition \ref{prop:image_signature_map} with trivial twisting line bundle in view of the isomorphism $\Wr(X)\xrightarrow{\cong}\Hr^0(X,\Wbf)$; since \cite[Corollary 7.2]{balmerGerstenWittSpectral2002} also yields an isomorphism $\Wr(X,\Lc)\cong\Hr^0(X,\Wbf(\Lc))$, we may regard Proposition \ref{prop:image_signature_map} as a generalisation of Knebusch's computation to arbitrary twisting line bundle.
\end{exe}

Let $X$ be a smooth $\Rb$-variety of dimension $d$ and let $\Lc$ be a line bundle on $X$; set $L=\Lc(\Rb)$. Analytic techniques are not available on the smooth manifold $X(\Rb)$. As a result, there is no natural candidate subgroup of $\Hr^c(X(\Rb),\Zb(L))$ through which $\gamma^c(X,\Lc)$ could factor; in this regard, we note that the inverse image of $\Im\gamma_{c/d+1}^c(X,\Lc)$ under the reduction homomorphism $\Hr^c(X(\Rb),\Zb(L))\rightarrow\Hr^c(X(\Rb),\Zb/2^{d+1-c}(L))$ that we mentioned following Lemma \ref{lem:image_cycle_class_four_lemma} is just as mysterious as $\Im\gamma^c(X,\Lc)$ in general. To estimate the defect of the quadratic real cycle class map to be surjective, we will instead study the exponents of $\Coker\gamma^c(X,\Lc)$ or, equivalently, the integers $e$ such that $\Im\gamma^c(X,\Lc)$ contains $e\Hr^c(X(\Rb)\Zb(L))$---the morphism $\gamma^c(X,\Lc)$ is closer to being surjective the smaller $e$ is. Jacobson's theorem \ref{theo:twisted_signature_iso of sheaves} provides the following result.

\begin{prop}\label{prop:kernel_cokernel_real_cycle_class_map_torsion}
Let $(n,c)$ be a pair of integers such that $n\leqslant d$. Then $\Ker\gamma_{n}^c(X,\Lc)$ has exponent $2^{2(d+1-n)}$.
\end{prop}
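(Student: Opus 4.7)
The plan is to reduce to a purely algebraic statement via Corollary \ref{cor:signature_iso_of_sheaves_finite_range}, then to decompose $\otimes\llangle-1\rrangle^{d+1-n}$ into $d+1-n$ single-step maps and control each one using the identity $\llangle-1\rrangle=2$ in the Witt ring.

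First I would factor $\gamma_n^c(X,\Lc)$ as $f:\Hr^c(X,\Ibf^n(\Lc))\to\Hr^c(X,\Ibf^{d+1}(\Lc))$ followed by the isomorphism $\gamma_{d+1}^c(X,\Lc)$, where $f$ is induced by $\otimes\llangle-1\rrangle^{d+1-n}$; thus $\Ker\gamma_n^c(X,\Lc)=\Ker f$. Writing $f$ as the composite of the $d+1-n$ maps $f_k:\Hr^c(X,\Ibf^k(\Lc))\to\Hr^c(X,\Ibf^{k+1}(\Lc))$ each induced by a single multiplication by $\llangle-1\rrangle$ (for $n\leqslant k\leqslant d$), Corollary \ref{cor:exponent_kernel_cokernel_composition} reduces the statement to proving that each $\Ker f_k$ has exponent $4$.

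The heart of the argument is the observation that $\llangle-1\rrangle=\langle 1,1\rangle=2$ in the Witt ring, so the $\Gm$-equivariant morphism $\alpha:\Ibf^k\to\Ibf^{k+1}$ given by $\otimes\llangle-1\rrangle$ and the natural $\Gm$-equivariant inclusion $\beta:\Ibf^{k+1}\hookrightarrow\Ibf^k$ satisfy $\beta\alpha=2\cdot\id_{\Ibf^k}$ and $\alpha\beta=2\cdot\id_{\Ibf^{k+1}}$. Hence the kernel sheaf $\Kbf=\Ker\alpha$ and the cokernel sheaf $\Qbf=\Coker\alpha$ on $\Sm_\Rb$ both have exponent~$2$: a section of $\Kbf$ is killed by $2=\beta\alpha$, and for any section $y$ of $\Ibf^{k+1}$, $2y=\alpha(\beta(y))$ lies in $\Im\alpha$. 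By Remark \ref{rema:morphism_of_homotopy_modules}, both $\Kbf$ and $\Qbf$ are contractions, so Remark \ref{rema:exponents_of_cohomology_groups} transports the exponent bound to $\Hr^i(X,\Kbf(\Lc))$ and $\Hr^i(X,\Qbf(\Lc))$ for every $i\geqslant 0$.

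To conclude for $f_k$, I would factor $\alpha(\Lc)$ through its image $\Jbf$ into two short exact sequences $0\to\Kbf(\Lc)\to\Ibf^k(\Lc)\to\Jbf\to 0$ and $0\to\Jbf\to\Ibf^{k+1}(\Lc)\to\Qbf(\Lc)\to 0$ of sheaves on $X$. The associated long exact sequences realise $\Ker(\Hr^c(X,\Ibf^k(\Lc))\to\Hr^c(X,\Jbf))$ as a quotient of $\Hr^c(X,\Kbf(\Lc))$ and $\Ker(\Hr^c(X,\Jbf)\to\Hr^c(X,\Ibf^{k+1}(\Lc)))$ as a quotient of $\Hr^{c-1}(X,\Qbf(\Lc))$; by Lemma \ref{lem:exponent_exact_sequences} both kernels have exponent $2$, and one more application of Corollary \ref{cor:exponent_kernel_cokernel_composition} to the factorisation of $f_k$ through $\Hr^c(X,\Jbf)$ yields $\Ker f_k$ of exponent $4$. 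Iterating this bound $d+1-n$ times gives the required exponent $4^{d+1-n}=2^{2(d+1-n)}$ for $\Ker\gamma_n^c(X,\Lc)$. The main delicate point I foresee is checking that the sheaf-level exponent bounds on $\Kbf$ and $\Qbf$ survive twisting by $\Lc$ and descent to cohomology of $X$: this is precisely what the contraction/homotopy-module framework of Remarks \ref{rema:morphism_of_homotopy_modules} and \ref{rema:exponents_of_cohomology_groups} delivers, so no additional ingredient should be required.
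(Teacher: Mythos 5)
Your proof is correct and follows essentially the same route as the paper: factor each multiplication-by-$\llangle-1\rrangle$ map through its image, bound the exponents of the kernel and cokernel sheaves using $\llangle-1\rrangle=2$ in the Witt ring, transport these bounds to twisted cohomology via the contraction/homotopy-module remarks, and conclude with Corollary \ref{cor:exponent_kernel_cokernel_composition}. The only difference is cosmetic: the paper applies this to the single composite map $\otimes\llangle-1\rrangle^{\otimes(d+1-n)}$ (whose kernel and cokernel sheaves have exponent $2^{d+1-n}$), whereas you iterate over $d+1-n$ single steps, and both give the same bound $2^{2(d+1-n)}$.
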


\begin{proof}
Let $\Kbf(\Lc)$ (resp. $\Qbf(\Lc)$, resp. $\Jbf(\Lc)$) denote the kernel (resp. cokernel, resp. image) of the multiplication by $\llangle -1\rrangle^{\otimes(d+1-n)}$ map from $\Ibf^n(\Lc)$ to $\Ibf^{d+1}(\Lc)$. Recall that $\llangle -1\rrangle=2$ in the Witt ring hence this map is also multiplication by $2^{d+1-n}$ (more precisely, it is the factored map of the multiplication map $\cdot 2^{d+1-n}:\Ibf^n(\Lc)\rightarrow\Ibf^n(\Lc)$ by the subsheaf $\Ibf^{d+1}(\Lc)$ of $\Ibf^n(\Lc)$).

We claim that the sheaves $\Kbf(\Lc)$ and $\Qbf(\Lc)$ have exponent $2^{d+1-n}$. Indeed, for the kernel $\Kbf(\Lc)$, this is true because the morphism $\otimes\llangle -1\rrangle^{\otimes(d+1-n)}$ is multiplication by $2^{d+1-n}$ up to composition with a monomorphism. On the other hand, since $\Ibf^{d+1}(\Lc)$ is a subsheaf of $\Ibf^n(\Lc)$, there is an inclusion $2^{d+1-n}\Ibf^{d+1}(\Lc)\subseteq 2^{d+1-n}\Ibf^n(\Lc)=\Jbf(\Lc)$ of subsheaves of $\Ibf^{d+1}(\Lc)$. It follows that $\Qbf(\Lc)=\Ibf^{d+1}(\Lc)/\Jbf(\Lc)$ is a quotient of $\Ibf^{d+1}(\Lc)/2^{d+1-n}\Ibf^{d+1}(\Lc)$, which has exponent $2^{d+1-n}$, so the sheaf $\Qbf(\Lc)$ also has exponent $2^{d+1-n}$. We conclude that the cohomology groups of $X$ with coefficients in $\Kbf(\Lc)$ and $\Qbf(\Lc)$ have exponent $2^{d+1-n}$.

Up to the isomorphism $\gamma_{d+1}^c(X,\Lc)$, the morphism $\gamma_{n}^c(X,\Lc)$ coincides with the map $f:\Hr^c(X,\Ibf^n(\Lc))\rightarrow\Hr^c(X,\Ibf^{d+1}(\Lc))$ induced in cohomology by the homomorphism $\otimes\llangle -1\rrangle^{\otimes(d+1-n)}$ of sheaves, hence it suffices to prove that $\Ker f$ has exponent $2^{2(d+1-n)}$. The morphism $f$ factors through cohomology with coefficients in the image $\Jbf(\Lc)$ as \[\Hr^c(X,\Ibf^n(\Lc))\xrightarrow{u}\Hr^c(X,\Jbf(\Lc))\xrightarrow{u'}\Hr^c(X,\Ibf^{d+1}(\Lc))\] where $u$ and $u'$ are induced by the morphisms $\Ibf^n(\Lc)\rightarrow\Jbf(\Lc)$ and $\Jbf(\Lc)\rightarrow\Ibf^{d+1}(\Lc)$ of sheaves respectively, hence $\Ker f$ is an extension of $\Ker u$ by $\Ker u'$. It now suffices to show that $\Ker u$ and $\Ker u'$ have exponent $2^{d+1-n}$. The exact sequence \[0\rightarrow\Kbf(\Lc)\rightarrow\Ibf^n(\Lc)\rightarrow\Jbf(\Lc)\rightarrow 0\] of sheaves on $X$ induces an exact sequence \[\Hr^c(X,\Kbf(\Lc))\rightarrow\Hr^c(X,\Ibf^n(\Lc))\xrightarrow{u}\Hr^c(X,\Jbf(\Lc))\] of abelian groups. Thus $\Ker u$ is a quotient of $\Hr^c(X,\Kbf(\Lc))$ which has exponent $2^{d+1-n}$ as already observed so that $\Ker u$ has exponent $2^{d+1-n}$. Similarly, the exact sequence \[0\rightarrow\Jbf(\Lc)\rightarrow\Ibf^{d+1}(\Lc)\rightarrow\Qbf(\Lc)\rightarrow 0\] induces an exact sequence \[\Hr^{c-1}(X,\Qbf(\Lc))\rightarrow\Hr^c(X,\Jbf(\Lc))\xrightarrow{u'}\Hr^c(X,\Ibf^{d+1}(\Lc))\] of abelian groups, where $\Hr^{c-1}(X,\Qbf(\Lc))$ has exponent $2^{d+1-n}$ as previously observed. We deduce as before that $\Ker u'$ has exponent $2^{d+1-n}$. This completes the proof.
\end{proof}

This proposition, together with Corollary \ref{cor:upper_bound_for_exponent}, are quantitative versions of the following result of Jacobson.

\begin{cor}[\protect{\cite[Corollary 8.9 (2)]{jacobsonRealCohomologyPowers2017}}]\label{cor:iso_after_inverting_2}
For every pair $(n,c)$ of integers, the map \[\gamma_{n}^c(X,\Lc)\otimes\Zb[1/2]:\Hr^c(X,\Ibf^n(\Lc))\otimes\Zb[1/2]\rightarrow\Hr^c(X(\Rb),\Zb[1/2](L))\] obtained by inverting $2$ is an isomorphism.
\end{cor}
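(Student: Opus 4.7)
The plan is to deduce the isomorphism after inverting $2$ as a formal consequence of the two-sided exponent bounds already established on the real cycle class map. More precisely, Proposition \ref{prop:kernel_cokernel_real_cycle_class_map_torsion} yields that $\Ker\gamma_n^c(X,\Lc)$ has exponent $2^{2(d+1-n)}$, while Corollary \ref{cor:upper_bound_for_exponent} gives $\Coker\gamma_n^c(X,\Lc)$ exponent $2^{d+1-n}$. In particular, both end terms of the canonical four-term exact sequence
\[0 \to \Ker\gamma_n^c(X,\Lc) \to \Hr^c(X,\Ibf^n(\Lc)) \xrightarrow{\gamma_n^c(X,\Lc)} \Hr^c(X(\Rb),\Zb(L)) \to \Coker\gamma_n^c(X,\Lc) \to 0\]
are abelian groups killed by a power of $2$.

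The key formal observation is that any abelian group $A$ annihilated by $2^m$ satisfies $A \otimes_\Zb \Zb[1/2] = 0$: for $a \in A$ and $x \in \Zb[1/2]$, one has $a \otimes x = 2^m a \otimes (2^{-m} x) = 0 \otimes (2^{-m} x) = 0$. Applying this to the kernel and to the cokernel, and using that $\Zb[1/2]$ is flat over $\Zb$ so that tensoring the above sequence preserves its exactness, I conclude that the middle arrow, which is $\gamma_n^c(X,\Lc) \otimes \Zb[1/2]$, is an isomorphism.

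There is essentially no obstacle here: the corollary is a formal packaging of the preceding exponent results. The only point requiring a brief comment is the edge range $n \geq d+1$, where the exponents $2^{d+1-n}$ and $2^{2(d+1-n)}$ should be read as forcing the groups in question to be trivial; this is consistent with and in fact already follows from Corollary \ref{cor:signature_iso_of_sheaves_finite_range}, which states that $\gamma_n^c(X,\Lc)$ is an isomorphism integrally in that range.
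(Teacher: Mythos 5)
Your proof is correct and follows essentially the same route as the paper: the paper likewise reduces the claim to the $2$-primary torsion of the kernel and cokernel, citing Proposition \ref{prop:kernel_cokernel_real_cycle_class_map_torsion} and Corollary \ref{cor:upper_bound_for_exponent}, and you merely spell out the (standard) flatness-of-$\Zb[1/2]$ argument that the paper leaves implicit.
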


\begin{rema}
This stands in sharp contrast with the status of the rational Hodge conjecture whose validity is unknown: we are really interested in integral statements.
\end{rema}

The exponents obtained in Corollary \ref{cor:upper_bound_for_exponent} and Proposition \ref{prop:kernel_cokernel_real_cycle_class_map_torsion} are not optimal. For instance, if $n=c=0$ and $d=1$, it follows from Knebusch's work \cite{knebuschAlgebraicCurvesReal1976a} cited in Example \ref{exe:connected_components_of_curves} that $\Ker\gamma^0(X)$ and $\Coker\gamma^0(X)$ have exponent $2$, whereas Proposition \ref{prop:kernel_cokernel_real_cycle_class_map_torsion} gives the exponent $2^{2(1+1-0)}=16$ for the kernel and Corollary \ref{cor:upper_bound_for_exponent} gives the exponent $2^{1+1-0}=4$ for the cokernel.

We contend that the exponent of Corollary \ref{cor:upper_bound_for_exponent} for the cokernel can be improved, but only slightly. To state our conjecture precisely, consider the following assertions, parametrised by triples $(X,\Lc,c)$ where $X$ is a smooth $\Rb$-variety of dimension $d$, the letter $\Lc$ denotes a line bundle on $X$ inducing a locally free module $L=\Lc(\Rb)$ on $X(\Rb)$ and $c$ is an integer such that $0\leqslant c\leqslant d$.
\begin{quote}
$P(X,\Lc,c)$: \emph{The cokernel of the map $\gamma^c(X,\Lc):\Hr^c(X,\Ibf^c(\Lc))\rightarrow\Hr^c(X(\Rb),\Zb(L))$ has exponent $2^{d-c}$.}
\end{quote}
Note that by Lemma \ref{lem:chow_witt_i_cohomology_cycle_map_same_image}, Assertion $P(X,\Lc,c)$ is equivalent to
\begin{quote}
$\widetilde{P}(X,\Lc,c)$: \emph{The cokernel of the map $\widetilde{\gamma}^c(X,\Lc):\widetilde{\CH}^c(X,\Lc)\rightarrow\Hr^c(X(\Rb),\Zb(L))$ has exponent $2^{d-c}$.}
\end{quote}
For brevity, we only consider the property $P(X,\Lc,c)$. We can now state the refined conjecture mentioned in the introduction:

\begin{con}\label{con:refined_hodge_conjecture}
For every triple $(X,\Lc,c)$ as above, Assertion $P(X,\Lc,c)$ holds. Moreover, the exponent $2^{d-c}$ predicted by Assertion $P(X,\Lc,c)$ is the best possible in the following sense: for every pair $(c,d)$ of non-negative integers such that $c<d$, there exists a smooth variety $X$ over $\Rb$ of dimension $d$ and a line bundle $\Lc$ on $X$ such that $\Coker\gamma^c(X,\Lc)$ does not have exponent $2^{d-c-1}$.
\end{con}

We observe that a number of positive results towards this conjecture have been collected in the previous sections:

\begin{theo}\label{theo:cases_of_conjecture}
Let $X$ be a connected smooth $\Rb$-variety of dimension $d$ and let $\Lc$ be a line bundle on $X$. Then Assertions $P(X,\Lc,i)$ holds for $i\in\{d,d-1,0\}$. Moreover $P(X,\Lc,d-2)$ also holds if the étale cohomology group $\Hr_\et^{2d-1}(X\times_\Rb\Spec\Cb,\Zb/2)$ vanishes.
\end{theo}

\begin{proof}
In codimension $d$, we have to prove that $\Coker\gamma^d(X,\Lc)$ has exponent $2^{d-d}=1$, namely that this cokernel vanishes, in other words that $\gamma^d(X,\Lc)$ is surjective: this is Theorem \ref{theo:conjecture_for_closed_points}. In codimension $d-1$, we have to show that $\Coker\gamma^{d-1}(X,\Lc)$ has exponent $2^{d-(d-1)}=2$: this is the final statement of Proposition \ref{prop:cohomology_classes_of_curves}. In codimension $d-2$, we have to prove that $\Coker\gamma^{d-2}(X,\Lc)$ has exponent $2^{d-(d-2)}=4$ if $\Hr_{\et}^{2d-1}(X\times_\Rb\Spec\Cb,\Zb/2)$ vanishes: this is Corollary \ref{cor:conjecture_for_surfaces}. In codimension $0$, it remains to establish that $\Coker\gamma^0(X,\Lc)$ has exponent $2^d$: this is Proposition \ref{prop:conjecture_untwisted_conn_comp}.
\end{proof}

\begin{rema}
We refer the reader to \cite[Examples 4.3.1]{colliot-theleneZerocyclesCohomologyReal1996} for examples of satisfaction of the étale cohomological vanishing condition of Theorem \ref{theo:cases_of_conjecture}. We mention here that it is satisfied for affine varieties, for cohomological dimension reasons, and for proper varieties $X$ such that $X(\Cb)$ is simply connected since, by Poincaré duality, the group $\Hr_\et^{2d-1}(X\times_\Rb\Spec\Cb,\Zb/2)$ is then isomorphic to $\Hr_\et^1(X\times_\Rb\Spec\Cb,\Zb/2)$ which vanishes under the simple connectedness assumption.
\end{rema}

We now enumerate cases covered by Theorem \ref{theo:cases_of_conjecture} and comment on the optimality of exponents.

\begin{exe}
Let $X$ be a smooth real curve and let $\Lc$ be a line bundle on $X$. Then by Theorem \ref{theo:cases_of_conjecture}, Assertion $P(X,\Lc,c)$ is satisfied for all $c$ since the set of possible codimensions is $\{1,0\}=\{d,0\}$. Moreover, the exponents predicted by Conjecture \ref{con:refined_hodge_conjecture} are optimal. Indeed, we only have to check that there exists a smooth real curve $X$ such that the map $\gamma^0(X):\Wbf(X)\rightarrow\Hr^0(X(\Rb),\Zb)$ is not surjective. As explained in Example \ref{exe:connected_components_of_curves}, this map can be identified with the global signature morphism $\sign:\Wr(X)\rightarrow\Hr^0(X(\Rb),\Zb)$ whose image is strict if $X(\Rb)$ is not connected, for example if $X=\Abb^1\setminus 0$.
\end{exe}

\begin{exe}\label{exe:conj_for_surfaces}
Let $X$ be a smooth real surface and let $\Lc$ be a line bundle on $X$. Then again, by Theorem \ref{theo:cases_of_conjecture}, Assertion $P(X,\Lc,c)$ is satisfied for all $c$ since the set of possible codimensions is $\{2,1,0\}=\{d,d-1,0\}$. Note that the exponent $2^2=4$ is optimal for surfaces in codimension $0$: there exist smooth surfaces $X$ such that $\Im\gamma^0(X)$ does not contain $2\Hr^0(X(\Rb),\Zb)$. For example, according to \cite[Proposition 5.1]{monnierUnramifiedCohomologyQuadratic2000}, this is the case if $X$ is projective, geometrically connected and rational (that is, the complex variety $X\times_\Rb\Spec\Cb$ is birational to $\mathbb{P}_\Cb^2$), and $X(\Rb)$ has at least two connected components.
\end{exe}

\begin{exe}
Let $X$ be a smooth real $3$-fold and let $\Lc$ be a line bundle on $X$. Then Assertion $P(X,\Lc,c)$ is satisfied for every $c\in\{d=3,d-1=2,d=0\}$, and in codimension $c=1$ if $\Hr_\et^5(X\times_\Rb\Spec\Cb,\Zb/2)$ vanishes, for example if $X$ is affine or if $X$ is proper and $X(\Cb)$ is simply connected.
\end{exe}

\begin{exe}\label{exe:optimality_dimension_1}
Let $X=\Abb^d\setminus 0$ with $d\geqslant 2$. Then since $X$ is an open subscheme of $\Abb^d$, the restriction map $\CH^{d-1}(\Abb^d)\rightarrow\CH^{d-1}(X)$ is surjective and thus $\CH^{d-1}(X)$ vanishes since $\CH^{d-1}(\Abb^d)=0$ by $\Abb^1$-invariance of Chow groups (recall that $d-1\geqslant 1$). It follows that the homomorphism $\overline{\gamma}^{d-1}(X):\CH^{d-1}(X)\rightarrow\Hr^{d-1}(X(\Rb),\Zb/2)$ is the zero map, thus by Proposition \ref{prop:cohomology_classes_of_curves}, the image of $\gamma^{d-1}(X)$ is given by \[\rho^{-1}(\{0\})=\Ker\rho=2\Hr^{d-1}(X(\Rb),\Zb)\] where $\rho:\Hr^{d-1}(X(\Rb),\Zb)\rightarrow\Hr^{d-1}(X(\Rb),\Zb/2)$ is the reduction mod $2$ homomorphism. But $X(\Rb)$ is homotopic to the $(d-1)$-sphere $\Sr^{d-1}$ hence $\Hr^{d-1}(X(\Rb),\Zb)\simeq\Zb$ and thus $2\Hr^{d-1}(X(\Rb),\Zb)\neq\Hr^{d-1}(X(\Rb),\Zb)$. It follows that $\Coker\gamma^{d-1}(X)=\Hr^{d-1}(X(\Rb),\Zb)/2\Hr^{d-1}(X(\Rb),\Zb)$ is non-zero, that is, it does not have exponent $1$. Thus $2=2^{d-(d-1)}$ is the best possible exponent in codimension $d-1$. Combined with Example \ref{exe:conj_for_surfaces}, we then see that the exponents predicted by Conjecture \ref{con:refined_hodge_conjecture} are optimal for surfaces.
\end{exe}

\begin{rema}
Let $X$ be a smooth $\Rb$-variety of dimension $d$, let $\Lc$ be a line bundle on $X$ and let $c$ be an integer such that $0\leqslant c\leqslant d$; we set $L=\Lc(\Rb)$. Although analysis does not appear to provide a clear candidate subgroup containing $\Im\gamma^c(X,\Lc)$, one could instead take inspiration from the topological conditions used in \cite{benoistIntegralHodgeConjecture2020} based on Steenrod squares. Indeed, note that the twisted Steenrod squares $\Sq_L:\Hr^*(X(\Rb),\Zb/2)\rightarrow\Hr^{*+1}(X(\Rb),\Zb(L))\rightarrow\Hr^{*+1}(X(\Rb),\Zb/2)$ provide the differentials of the first page of the Bockstein spectral sequence associated with the filtration \[\cdots\rightarrow\Zb(L)\xrightarrow{2}\Zb(L)\rightarrow\cdots\rightarrow\Zb(L)\xrightarrow{2}\Zb(L)\] of $\Zb(L)$. On the other hand, one can consider the Pardon spectral sequence studied in \cite{pardonFILTEREDGESTENWITTRESOLUTION}, which is the spectral sequence associated with the filtered sheaf \[\cdots\rightarrow\Ibf^{n+1}(\Lc)\rightarrow\Ibf^n(\Lc)\rightarrow\cdots\rightarrow\Ibf(\Lc)\rightarrow\Wbf(\Lc).\] We note that Jacobson's signature homomorphisms induce a morphism $\Wbf(\Lc)\rightarrow\iota_*\Zb(\Lc)$ of filtered sheaves (where $\iota:X(\Rb)\hookrightarrow X$ is the inclusion as usual) and thus a morphism of spectral sequences from Pardon's construction to the Bockstein spectral sequence. Moreover, the classes in the cohomology of $\Ibf^n(\Lc)$ coming from higher terms of the filtration can be identified by vanishing conditions for differentials in higher pages, and the image of these classes under the quadratic real cycle class map must then be killed by the corresponding differentials. One can therefore envision to propose a candidate subgroup for the image of $\gamma^c(X,\Lc)$ in terms of these vanishing conditions, which could incidentally shed light on Conjecture \ref{con:refined_hodge_conjecture}. We hope to pursue this approach in future work. 
\end{rema}

\pagestyle{plain}

\printbibliography

\end{document}